\documentclass[12pt,leqno,a4paper]{article}

\usepackage[english]{babel}
\usepackage{amsmath}
\usepackage{amsfonts}
\usepackage{amsthm}
\usepackage{amssymb}

% di Sergio
\usepackage{xspace}
\usepackage{euscript}
\usepackage{graphicx}
\usepackage{amscd}
\usepackage{tabularx}

% fin qui di Sergio
\usepackage{enumerate}
 \usepackage{epsfig} %% add this line and the next only if you have pictures
 \usepackage{graphics} %% pictures should be in esp format
 %For pictures: screened artwork should be set up with an 85 or 100 line screen
%
%\usepackage{calc}
%
% \CompileMatrices
%

% Ambienti della classe Theorem
%
\theoremstyle{plain}
\newtheorem{theo}{Theorem}[section]%[Definizioni]
\newtheorem{lem}[theo]{Lemma}%[Definizioni]
\newtheorem{prop}[theo]{Proposition}%[Definizioni]
%

%
% Ambienti della classe Definition
%
\theoremstyle{definition}
\newtheorem{definition}[theo]{Definition}%[Definizioni]
%\newtheorem*{d211}{211 DEFINITION}
%
% Ambienti della classe Remark
%
\theoremstyle{remark}
%[Definizioni]
\newtheorem{rem}[theo]{Remark}%[Definizioni]
%
% Ridefinizione del contatore per le sezioni
%

%
% Ridefinizione del contatore per le equazioni
%
\numberwithin{equation}{section}

%
%
%Ridefinizione delle note
%\renewcommand{\thefootnote}{\fnsymbol{footnote}}
%
%
%
% Definizioni di comodo
%
\newcommand{\C}{\mathbb{C}}

\newcommand{\R}{\mathbb{R}}
\newcommand{\N}{\mathbb{N}}
\newcommand{\M}{\mathbb{M}}
\newcommand{\divrg}{\textrm{div}\,}

%
%
%\usepackage{varioref}
%
%%%%%%%%%%%%%%%%%%%%%%%%%%%%%%%%%%%%%%%%%%%%%%%%%%%%%%%%%%%%%%%%%%%%%%%
%
%

\title{Stable determination of a rigid inclusion in an anisotropic elastic plate
\thanks{Work supported by PRIN No. 20089PWTPS}}
\author{Antonino Morassi\thanks{Dipartimento di Ingegneria Civile e Architettura,
Universit\`a degli Studi di Udine, via Cotonificio 114, 33100
Udine, Italy. E-mail: \textsf{antonino.morassi@uniud.it}}, \  Edi
Rosset\thanks{Dipartimento di Matematica e Informatica,
Universit\`a degli Studi di Trieste, via Valerio 12/1, 34127
Trieste, Italy. E-mail: \textsf{rossedi@univ.trieste.it}} \ and
Sergio Vessella\thanks{DIMAD, Universit\`a degli Studi di Firenze,
Via Lombroso 6/17, 50134 Firenze, Italy,
\textsf{sergio.vessella@dmd.unifi.it}}}

%\date{}

\begin{document}

\maketitle

\begin{abstract}

In this paper we consider the inverse problem of determining a
rigid inclusion inside a thin plate by applying a couple field at
the boundary and by measuring the induced transversal displacement
and its normal derivative at the boundary of the plate. The plate
is made by non-homogeneous linearly elastic material belonging to
a general class of anisotropy. For this severely ill-posed
problem, under suitable a priori regularity assumptions on the
boundary of the inclusion, we prove a stability estimate of
log-log type.

\medskip

\noindent\textbf{Mathematics Subject Classification (2010)}:
Primary 35B60. Secondary 35B30, 35Q74, 35R30.

\medskip

\noindent \textbf{Keywords}: Inverse problems, elastic plates,
stability estimates, unique continuation, rigid inclusion.
\end{abstract}

\centerline{}

\section{Introduction}

The present study continues a line of research on Non-Destructive
Techniques (NDT) in the theory of elastic plates. Specifically,
the paper deals with the determination of a rigid inclusion
embedded in elastic plates by measurements taken at the boundary.
This inverse problem arises in damage assessment of plates which
are possibly defective due to the presence of interior rigid
inclusions induced during the manufacturing process. The most
simple and common NDT is the visual inspection of the specimen.
However, the visual inspection is unable to detect a rigid
inclusion which is embedded in a plate or which is
indistinguishable {}from the surrounding material. Other
conventional NDTs, such as those based on thermal or ultrasonic
analysis, X-rays methods and others, are local by nature. To be
effective, these methods require the vicinity of the inclusion be
known a priori and readily accessible for testing, whereas in many
practical situations the boundary of the plate is the only
accessible region. Vibration response-based identification methods
were also recently developed to overcome these difficulties, see,
for example, \cite{F-Q-11}.

The basic idea of most NDTs is that the inclusion changes the
response of a plate and that the defect can be determined by
comparing the response of a possibly defective specimen with a
reference specimen, i.e. a plate without inclusion. In the case of
inclusion made by (different) elastic material, in \cite{M-R-V3},
\cite{M-R-V5} constructive upper and lower estimates of the area
of the inclusion in terms of the difference between the work
exerted in deforming the defective and the reference plate by
applying a given couple field at the boundary were determined.
These results give no indication on the position of the domain
occupied by the inclusion and, unfortunately, they still have not
been extended to the case of a rigid inclusion. In this paper we
tackle these two issues and we establish a stability result for
the inverse problem of determining a rigid inclusion in an
anisotropic elastic plate by one boundary measurement.

{}From the mathematical point of view, \cite{Fi}, \cite{Gu}, we
shall work within the Kirchhoff-Love theory of thin, elastic
anisotropic plates under infinitesimal deformations. Let $\Omega$
denote the middle plane of the plate and let $h$ be its constant
thickness. We assume that $\Omega$ is a bounded domain of $\R^2$
of class $C^{1,1}$. The rigid inclusion $D$ is modelled as an open
simply connected domain compactly contained in $\Omega$. The
transversal displacement $w \in H^2(\Omega)$ of the plate
satisfies the following mixed boundary value problem
\begin{center}
\( {\displaystyle \left\{
\begin{array}{lr}
     {\rm div}({\rm div} (
      {\mathbb P}\nabla^2 w))=0,
      & \mathrm{in}\ \Omega \setminus \overline {D},
        \vspace{0.25em}\\
      ({\mathbb P} \nabla^2 w)n\cdot n=-\hat M_n, & \mathrm{on}\ \partial \Omega,
          \vspace{0.25em}\\
      {\rm div}({\mathbb P} \nabla^2 w)\cdot n+(({\mathbb P} \nabla^2
      w)n\cdot \tau),_s
      =(\hat M_\tau),_s, & \mathrm{on}\ \partial \Omega,
        \vspace{0.25em}\\
      w|_{\overline{D}} \in \mathcal{A}, &\mathrm{in}\ \overline{D},
          \vspace{0.25em}\\
        \frac{\partial w^e}{\partial n} = \frac{\partial w^i}{\partial n}, &\mathrm{on}\ \partial {D},
          \vspace{0.25em}\\
\end{array}
\right. } \) \vskip -8.9em
\begin{eqnarray}
& & \label{eq:1.dir-pbm-incl-rig-1}\\
& & \label{eq:1.dir-pbm-incl-rig-2}\\
& & \label{eq:1.dir-pbm-incl-rig-3}\\
& & \label{eq:1.dir-pbm-incl-rig-4}\\
& & \label{eq:1.dir-pbm-incl-rig-5}
\end{eqnarray}

\end{center}
coupled with the \emph{equilibrium conditions} for the rigid
inclusion $D$
\begin{multline}
  \label{eq:1.equil-rigid-incl}
    \int_{\partial D} \left ( {\rm div}({\mathbb P} \nabla^2 w)\cdot n+(({\mathbb P} \nabla^2
  w)n\cdot \tau),_s \right )g - (({\mathbb P} \nabla^2 w)n\cdot n)
  g_{,n} =0, \\   \quad \hbox{for every } g\in \mathcal{A},
\end{multline}
where $\mathcal{A}$ denotes the space of affine functions. In the
above equations, $n$ and $\tau$ are the unit outer normal and the
unit tangent vector to $\Omega \setminus \overline{D}$,
respectively, and we have defined $w^e \equiv w|_{\Omega \setminus
\overline{D}}$ and $w^i \equiv w|_{\overline{D}}$. Moreover,
$\widehat M_{\tau}$, $\widehat M_n$ are the twisting and bending
components of the assigned couple field $\widehat M$,
respectively. The plate tensor $\mathbb P$ is given by $\mathbb P
= \frac{h^3}{12}\mathbb C$, where $\mathbb C$ is the elasticity
tensor describing the response of the material.

Given any $\widehat{M}\in H^{-\frac{1}{2}}(\partial\Omega, \R^2)$,
satisfying the compatibility conditions $\int_{\partial \Omega}
\widehat{M}_i = 0$, for $i=1,2$, and for $\mathbb P$ bounded and
strongly convex, problem
\eqref{eq:1.dir-pbm-incl-rig-1}--\eqref{eq:1.equil-rigid-incl}
admits a solution $w \in H^{2}(\Omega)$, which is uniquely
determined up to addition of an affine function.

The uniqueness issue for the inverse problem under consideration is the following.

\textit{Given an open portion $\Sigma$ of $\partial\Omega$ and given two solutions $w_i$ to
\eqref{eq:1.dir-pbm-incl-rig-1}--\eqref{eq:1.equil-rigid-incl} for $D=D_i$, $i=1,2$,
satisfying
\begin{equation}
  \label{eq:1.cond_Dir1}
  w_1 = w_2, \hbox{ on } \Sigma,
\end{equation}
\begin{equation}
  \label{eq:1.cond_Dir2}
  \frac{\partial w_1}{\partial n} =\frac{\partial w_2}{\partial n} , \hbox{ on } \Sigma,
\end{equation}
does $D_1=D_2$ hold?}

Let us notice that, by \eqref{eq:1.dir-pbm-incl-rig-2},
\eqref{eq:1.dir-pbm-incl-rig-3}, \eqref{eq:1.cond_Dir1},
\eqref{eq:1.cond_Dir2}, $w_1$ and $w_2$ assume the same Cauchy
data on $\Sigma$. Under the a priori assumption of $C^{3,1}$
regularity of the boundary of the inclusion, a positive answer to
the uniqueness issue has been given in \cite{M-R2}, see also
\cite{M-R-V2} for a result of unique determination of a cavity in
an elastic plate by \textit{two} boundary measurements.

In order to face the stability issue, following the approach used
for the case of rigid inclusions in elastic bodies in \cite{M-R1},
we found it convenient to replace each solution $w_i$ introduced
above with $v_i=w_i-g_i$, where $g_i$ is the affine function which
coincides with $w_i$ on $\partial D_i$, $i=1,2$. By this approach,
maintaining the same letter to denote the solution, we rephrase
the equilibrium problem
\eqref{eq:1.dir-pbm-incl-rig-1}--\eqref{eq:1.dir-pbm-incl-rig-5}
in terms of the following mixed boundary value problem with
homogeneous Dirichlet conditions on the boundary of the rigid
inclusion

\begin{center}
\( {\displaystyle \left\{
\begin{array}{lr}
     {\rm div}({\rm div} (
      {\mathbb P}\nabla^2 w))=0,
      & \mathrm{in}\ \Omega \setminus \overline {D},
        \vspace{0.25em}\\
      ({\mathbb P} \nabla^2 w)n\cdot n=-\hat M_n, & \mathrm{on}\ \partial \Omega,
          \vspace{0.25em}\\
      {\rm div}({\mathbb P} \nabla^2 w)\cdot n+(({\mathbb P} \nabla^2
      w)n\cdot \tau),_s
      =(\hat M_\tau),_s, & \mathrm{on}\ \partial \Omega,
        \vspace{0.25em}\\
      w=0, &\mathrm{on}\ \partial D,
          \vspace{0.25em}\\
        \frac{\partial w}{\partial n} = 0, &\mathrm{on}\ \partial {D},
          \vspace{0.25em}\\
\end{array}
\right. } \) \vskip -8.9em
\begin{eqnarray}
& & \label{eq:1.dir-pbm-incl-rig-1bis}\\
& & \label{eq:1.dir-pbm-incl-rig-2bis}\\
& & \label{eq:1.dir-pbm-incl-rig-3bis}\\
& & \label{eq:1.dir-pbm-incl-rig-4bis}\\
& & \label{eq:1-dir-pbm-incl-rig-5bis}
\end{eqnarray}

\end{center}
\noindent which has  a unique solution $w\in H^2(\Omega\setminus
\overline{D})$.

On the other hand, it is clear that the arbitrariness of this
normalization, related to the fact that $g_i$ is unknown, $i=1,2$,
leads to the following formulation of the stability issue.

\textit{Given two solutions $w_i$ to
\eqref{eq:1.dir-pbm-incl-rig-1bis}--\eqref{eq:1-dir-pbm-incl-rig-5bis}, \eqref{eq:1.equil-rigid-incl} when $D=D_i$, $i=1,2$,
satisfying, for some $\epsilon>0$,
\begin{equation}
  \label{eq:cond_Dir_stab}
  \min_{g\in \mathcal{A}}\left\{ \| w_1 - w_2 - g \|_{L^2(\Sigma)} + \left\| \frac{\partial}{\partial n}(w_1 - w_2 - g) \right\|_{L^2(\Sigma)}\right\}\leq \epsilon,
\end{equation}
to evaluate the rate at which the Hausdorff distance between $D_1$
and $D_2$ tends to zero as $\epsilon$ tends to zero.}

In the present paper, assuming $C^{3,1}$ regularity of $\partial
D$, we prove the following constructive stability estimate of
log-log type
\begin{equation}
    \label{eq:log-log-estimate}
    d_{\cal H}( \overline{D_1},\overline{D_2} ) \leq C \left ( \log
    \left | \log \epsilon \right | \right )^{-\eta},
\end{equation}
where $C$, $\eta$, $C>0$ and $0<\eta\leq 1$, are constants only
depending on the a priori data, see Theorem \ref{theo:Main} for a
precise statement.

The methods used to prove \eqref{eq:log-log-estimate} are based
essentially on quantitative estimates of unique continuation
{}from Cauchy data for a solution of the mixed problem
\eqref{eq:1.dir-pbm-incl-rig-1bis}--\eqref{eq:1-dir-pbm-incl-rig-5bis},
\eqref{eq:1.equil-rigid-incl}. These estimates involve a
cornerstone result of unique continuation, namely the
\textit{Three Spheres Inequality} \eqref{eq:3sph} for solutions to
the plate equation \eqref{eq:1.dir-pbm-incl-rig-1bis}, which has
been determined in \cite{M-R-V5} under the very general assumption
that the elastic material of the plate obeys the \textit{dichotomy
condition} \eqref{3.D(x)bound}-\eqref{3.D(x)bound 2}.

The logarithmic character of the stability estimate
\eqref{eq:log-log-estimate} seems difficult to improve. First, the
Cauchy problem up to the boundary is severely ill-posed and, even
in the simpler context of the electrical impedance tomography
which involves a second order elliptic equation instead of a
fourth order one, the corresponding stability estimate with a
single logarithm is the best possible result, see \cite{A-B-R-V}
and also \cite{A-R-R-V} for a general discussion on the
ill-posedness of the Cauchy problem. We also quote \cite{DiC-R}
for examples of exponential instability for the inverse inclusion
problem in a conducting body. In addition, a further difficulty
arises in our analysis. It is due to the lack of quantitative
estimates of the \textit{strong unique continuation property at
the boundary} in the form of either \textit{Three Spheres
Inequality} or \textit{Doubling Inequality}. It has been shown in
\cite{A-B-R-V} that this is a key ingredient in proving that the
stability estimate for the corresponding inverse problem with
unknown boundaries in the conductivity context is not worse than
logarithm. This mathematical tool is not currently available for
the plate operator, even in the simplest case of isotropic
material, and this is the reason for the presence of a double
logarithm in the stability estimate \eqref{eq:log-log-estimate}.
We refer to \cite{M-R1} for a discussion on the analogous problem
in the determination of a rigid inclusion in an isotropic elastic
body {}by boundary measurements. Finally, as remarked in
\cite{M-R-V4}, it seems hopeless the possibility that solutions to
\eqref{eq:1.dir-pbm-incl-rig-1} can satisfy even a strong unique
continuation property at the interior, without any a priori
assumption on the anisotropy of the material, see also \cite{Ali}.
Regarding this point, our dichotomy condition
\eqref{3.D(x)bound}-\eqref{3.D(x)bound 2} basically contains the
same assumptions under which the unique continuation property
holds for a fourth order elliptic equation in two variables.

The paper is organized as follows. Some notation is presented in
Section $2$. In Section $3$ we state two auxiliary propositions
concerning the estimate of continuation {}from the interior
(Proposition \ref{prop:LPS}) and {}from Cauchy data (Proposition
\ref{prop:Cauchy1}), and we give the proof of the main Theorem
\ref{theo:Main}. Section $4$ contains the proofs of Proposition
\ref{prop:LPS} and Proposition \ref{prop:Cauchy1}. Proofs of
regularity estimates of the solution to the mixed problem
\eqref{eq:1.dir-pbm-incl-rig-1bis}--\eqref{eq:1-dir-pbm-incl-rig-5bis},
\eqref{eq:1.equil-rigid-incl} are presented in Section $5$ and, in
part, in the Appendix.

\section{Notation}

Let $P=(x_1(P), x_2(P))$ be a point of $\R^2$.
We shall denote by $B_r(P)$ the disk in $\R^2$ of radius $r$ and
center $P$ and by $R_{a,b}(P)$ the rectangle of center $P$ and sides parallel
to the coordinate axes, of length $2a$ and $2b$, namely
$R_{a,b}(P)=\{x=(x_1,x_2)\ |\ |x_1-x_1(P)|<a,\ |x_2-x_2(P)|<b \}$. To simplify the notation,
we shall denote $B_r=B_r(O)$, $R_{a,b}=R_{a,b}(O)$.

Given a bounded domain $\Omega$ in $\R^2$ we shall denote
\begin{equation}
  \label{eq:Omega_rho}
   \Omega_\rho=\{x\in \Omega\ |\ \hbox{dist}(x,\partial\Omega)>\rho\}.
\end{equation}

\noindent When representing locally a boundary as a graph, we use
the following definition.
\begin{definition}
  \label{def:2.1} (${C}^{k,1}$ regularity)
Let $\Omega$ be a bounded domain in ${\R}^{2}$. Given $k\in\N$, we say that a portion $S$ of
$\partial \Omega$ is of \textit{class ${C}^{k,1}$ with
constants $\rho_{0}$, $M_{0}>0$}, if, for any $P \in S$, there
exists a rigid transformation of coordinates under which we have
$P=0$ and
\begin{equation*}
  \Omega \cap R_{\frac{\rho_0}{M_0},\rho_0}=\{x=(x_1,x_2) \in R_{\frac{\rho_0}{M_0},\rho_0}\quad | \quad
x_{2}>\psi(x_1)
  \},
\end{equation*}
where $\psi$ is a ${C}^{k,1}$ function on
$\left(-\frac{\rho_0}{M_0},\frac{\rho_0}{M_0}\right)$ satisfying
\begin{equation*}
\psi(0)=0,
\end{equation*}
\begin{equation*}
\psi' (0)=0, \quad \hbox {when } k \geq 1,
\end{equation*}
\begin{equation*}
\|\psi\|_{{C}^{k,1}\left(-\frac{\rho_0}{M_0},\frac{\rho_0}{M_0}\right)} \leq M_{0}\rho_{0}.
\end{equation*}

\medskip
\noindent When $k=0$ we also say that $S$ is of
\textit{Lipschitz class with constants $\rho_{0}$, $M_{0}$}.
\end{definition}
\begin{rem}
  \label{rem:2.1}
  We use the convention to normalize all norms in such a way that their
  terms are dimensionally homogeneous with the $L^\infty$ norm and coincide with the
  standard definition when the dimensional parameter equals one.
  For instance, the norm appearing above is meant as follows
\begin{equation*}
  \|\psi\|_{{C}^{k,1}\left(-\frac{\rho_0}{M_0},\frac{\rho_0}{M_0}\right)} =
  \sum_{i=0}^{k+1} \rho_0^i
  \|\psi^{(i)}\|_{{L}^{\infty}\left(-\frac{\rho_0}{M_0},\frac{\rho_0}{M_0}\right)},
\end{equation*}
where $\psi^{(i)}$ is the $i$-th derivative with respect to the $x_1$ variable.
Similarly, denoting by $\nabla^i u$ the vector which components are the derivatives of order $i$ of a
function $u$ defined in $\Omega$, we denote
\begin{equation*}
  \|u\|_{{C}^{k,1}(\Omega)} =\sum_{i=0}^{k+1}
  {\rho_{0}}^{i}\|{\nabla}^{i} u\|_{{L}^{\infty}(\Omega)},
\end{equation*}
\begin{equation*}
\|u\|_{L^2(\Omega)}=\rho_0^{-1}\left(\int_\Omega
u^2\right) ^{\frac{1}{2}},
\end{equation*}
\begin{equation*}
\|u\|_{H^m(\Omega)}=\rho_0^{-1}\left(\sum_{i=0}^m
\rho_0^{2i}\int_\Omega|\nabla^i u|^2\right)^{\frac{1}{2}},
\end{equation*}
and so on for boundary and trace norms such as
$\|\cdot\|_{H^{\frac{1}{2}}(\partial\Omega)}$,
$\|\cdot\|_{H^{-\frac{1}{2}}(\partial\Omega)}$.

Notice also that, when $\Omega=B_{R}$, then $\Omega$ satisfies
Definition \ref{def:2.1} with $\rho_{0}=R$, $M_0=2$ and therefore, for
instance, we have
\begin{equation*}
\|u\|_{H^m(B_R)}=R^{-1}\left(\sum_{i=0}^m
R^{2i}\int_{B_R}|\nabla^i u|^2\right)^{\frac{1}{2}},
\end{equation*}
\end{rem}

Given a bounded domain $\Omega$ in $\R^2$ such that $\partial
\Omega$ is of class $C^{k,1}$, with $k\geq 1$, we consider as
positive the orientation of the boundary induced by the outer unit
normal $n$ in the following sense. Given a point
$P\in\partial\Omega$, let us denote by $\tau=\tau(P)$ the unit
tangent at the boundary in $P$ obtained by applying to $n$ a
counterclockwise rotation of angle $\frac{\pi}{2}$, that is
\begin{equation}
    \label{eq:2.tangent}
        \tau=e_3 \times n,
\end{equation}
where $\times$ denotes the vector product in $\R^3$, $\{e_1,
e_2\}$ is the canonical basis in $\R^2$ and $e_3=e_1 \times e_2$.

Given any connected component $\cal C$ of $\partial \Omega$ and
fixed a point $P\in\cal C$, let us define as positive the
orientation of $\cal C$ associated to an arclength
parametrization $\varphi(s)=(x_1(s), x_2(s))$, $s \in [0, l(\cal
C)]$, such that $\varphi(0)=P$ and
$\varphi'(s)=\tau(\varphi(s))$, where $l(\cal C)$ denotes the
length of $\cal C$.

Throughout the paper, we denote by $\partial_i u$, $\partial_s u$, and $\partial_n u$
the derivatives of a function $u$ with respect to the $x_i$
variable, to the arclength $s$ and to the normal direction $n$,
respectively, and similarly for higher order derivatives.

We denote by $\mathbb{M}^2$ the space of $2 \times 2$ real valued
matrices and by ${\mathcal L} (X, Y)$ the space of bounded linear
operators between Banach spaces $X$ and $Y$.

For every $2 \times 2$ matrices $A$, $B$ and for every $\mathbb{L}
\in{\mathcal L} ({\mathbb{M}}^{2}, {\mathbb{M}}^{2})$, we use the
following notation:
\begin{equation}
  \label{eq:2.notation_1}
  ({\mathbb{L}}A)_{ij} = L_{ijkl}A_{kl},
\end{equation}
\begin{equation}
  \label{eq:2.notation_2}
  A \cdot B = A_{ij}B_{ij},
\end{equation}
\begin{equation}
  \label{eq:2.notation_3}
  |A|= (A \cdot A)^{\frac {1} {2}},
\end{equation}
\begin{equation}
  \label{eq:2.notation_3bis}
  A^{sym} =  \frac{1}{2} \left ( A + A^t \right ),
\end{equation}
where $A^t$ denotes the transpose of the matrix $A$.
Notice that here and in the sequel summation over repeated indexes
is implied.

Finally, let us introduce the linear space of the affine functions
on $\R^2$
\begin{equation*}
  \label{eq:2.notation_3ter}
  \mathcal{A}=\{g(x_1,x_2)=ax_1+bx_2+c,\  a,b,c \in\R\}.
\end{equation*}

\subsection{A priori information}
\medskip
\noindent {\it i) A priori information on the domain.}

Let us consider a thin plate
$\Omega\times[-\frac{h}{2},\frac{h}{2}]$ with middle surface
represented by a bounded domain $\Omega$ in $\R^2$ and having
uniform thickness $h$, $h<<\hbox{diam}(\Omega)$.

We shall assume
that, given $\rho_0$, $M_1>0$,
\begin{equation}
   \label{eq:bound_area}
|\Omega|\leq M_1\rho_0^2,
\end{equation}
where $|\Omega|$ denotes the Lebesgue measure of $\Omega$. We shall also assume
that $\Omega$ contains an open simply connected rigid
inclusion $D$ such that
\begin{equation}
   \label{eq:compactness}
    \hbox{dist}(D, \partial \Omega) \geq \rho_0.
\end{equation}
Moreover, we denote by $\Sigma$ an open portion
within $\partial \Omega$ representing the part of the boundary
where measurements are taken.

Concerning the regularity of the boundaries, given $M_0>0$, we assume that
\begin{equation}
   \label{eq:reg_Omega}
\partial\Omega \hbox{ is of } class\  C^{2,1}
\ with\ constants\ \rho_0, M_0,
\end{equation}
\begin{equation}
   \label{eq:reg_Sigma}
\Sigma \hbox{ is of } class\  C^{3,1} \ with\ constants\
\rho_0, M_0.
\end{equation}
\begin{equation}
   \label{eq:reg_D}
\partial D \hbox{ is of } class\  C^{3,1}
\ with\ constants\ \rho_0, M_0.
\end{equation}

Moreover, we shall assume that for some $P_0\in\Sigma$ and some $\delta_0$, $0<\delta_0<1$,
\begin{equation}
   \label{eq:large_enough}
   \partial\Omega\cap
R_{\frac{\rho_0}{M_0},\rho_0}(P_0)\subset\Sigma,
\end{equation}
and that
\begin{equation}
   \label{eq:small_enough}
   |\Sigma|\leq(1-\delta_0)|\partial\Omega|.
\end{equation}
\medskip
\noindent {\it ii) Assumptions about the boundary data.}

On the Neumann data $\widehat{M}$ we assume that
\begin{equation}
   \label{eq:reg_M}
\widehat{M}\in L^2(\partial \Omega,\R^2),\quad
(\widehat{M}_n,(\widehat{M}_\tau),_s)\not\equiv 0,
\end{equation}
\begin{equation}
   \label{eq:supp_M}
\hbox{supp}(\widehat{M})\subset\subset\Sigma,
\end{equation}
the (obvious) compatibility condition
\begin{equation}
   \label{eq:M_comp}
    \int_{\partial \Omega} \widehat{M}_i = 0, \quad i=1,2,
\end{equation}
and that, for a given constant $F>0$,
\begin{equation}
\label{eq:M_frequency}
   \frac{\|\widehat{M}\|_{L^2(\partial
   \Omega ,\R^2)}}{ \|\widehat{M}\|_{H^{-\frac{1}{2}}(\partial \Omega,\R^2)}}\leq
   F.
\end{equation}

{\it iii) Assumptions about the elasticity tensor.}

Let us assume that the plate is made of nonhomogeneous linear
elastic material with plate tensor
\begin{equation}
\label{eq:P_def}
   \mathbb{P}=\frac{h^3}{12}\mathbb{C},
\end{equation}
where the elasticity tensor $\C(x) \in{\cal L}
({\M}^{2}, {\M}^{2})$ has cartesian components
$C_{ijkl}$ which satisfy the following symmetry conditions
\begin{equation}
  \label{eq:sym-conditions-C-components}
    C_{ijkl} = C_{ klij} =
    C_{ klji} \quad i,j,k,l
    =1,2, \hbox{ a.e. in } \Omega.
\end{equation}
We recall that
\eqref{eq:sym-conditions-C-components} are equivalent to
\begin{equation}
  \label{eq:sym-conditions-C-1}
  {\C}A={\C} {A}^{sym},
\end{equation}
\begin{equation}
  \label{eq:eq:sym-conditions-C-2}
  {\C}A \quad \hbox{is } symmetric,
\end{equation}
\begin{equation}
  \label{eq:eq:sym-conditions-C-3}
  {\C}A \cdot B= {\C}B \cdot A,
\end{equation}
for every $2 \times 2$ matrices $A$, $B$.

In order to simplify the presentation, we shall assume that the tensor
$\mathbb{C}$ is defined in all of $\R^2$.

Condition \eqref{eq:sym-conditions-C-components} implies that instead of $16$ coefficients
we actually deal with $6$ coefficients and we denote
\begin{center}
\( {\displaystyle \left\{
\begin{array}{lr}
    C_{1111}=A_0, \ \ C_{1122}=C_{2211}=B_0,
         \vspace{0.12em}\\
    C_{1112}=C_{1121}=C_{1211}=C_{2111}=C_0,
        \vspace{0.12em}\\
    C_{2212}=C_{2221}=C_{1222}=C_{2122}=D_0,
        \vspace{0.12em}\\
    C_{1212}=C_{1221}=C_{2112}=C_{2121}=E_0,
        \vspace{0.12em}\\
    C_{2222}=F_0,
        \vspace{0.25em}\\
\end{array}
\right. } \) \vskip -3.0em
\begin{eqnarray}
\ & & \label{3.coeff6}
\end{eqnarray}
\end{center}
and
\begin{equation}
    \label{3.coeffsmall}
    a_0=A_0, \ a_1=4C_0, \ a_2=2B_0+4E_0, \ a_3=4D_0, \ a_4=F_0.
\end{equation}

Let $S(x)$ be the following $7\times 7$ matrix
\begin{equation}
    \label{3. S(x)}
    S(x) = {\left(
\begin{array}{ccccccc}
  a_0   & a_1   & a_2   & a_3   & a_4   & 0    &    0    \\
  0     & a_0   & a_1   & a_2   & a_3   & a_4  &    0    \\
  0     & 0     & a_0   & a_1   & a_2   & a_3  &    a_4  \\
  4a_0  & 3a_1  & 2a_2  & a_3   & 0     & 0    &    0    \\
  0     & 4a_0  & 3a_1  & 2a_2  & a_3   & 0    &    0    \\
  0     & 0     & 4a_0  & 3a_1  & 2a_2  & a_3  &    0    \\
  0     & 0     & 0     & 4a_0  & 3a_1  & 2a_2 &    a_3  \\
  \end{array}
\right)},
\end{equation}
and
\begin{equation}
    \label{3.D(x)}
    {\mathcal{D}}(x)= \frac{1}{a_0} |\det S(x)|.
\end{equation}

On the elasticity tensor $\mathbb{C}$ we make the following
assumptions:

\medskip
{I)} \textit{Regularity}
\begin{equation}
  \label{eq:3.bound}
  \mathbb{C} \in C^{1,1}(\R^2,   {\mathcal L} ({\mathbb{M}}^{2},
  {\mathbb{M}}^{2})),
\end{equation}
with
\begin{equation}
  \label{eq:3.bound_quantit}
  \sum_{i,j,k,l=1}^2 \sum_{m=0}^2 \rho_0^m \|\nabla^m C_{ijkl}\|_{L^\infty(\R^2)} \leq
    M,
\end{equation}
where $M$ is a positive constant;

{II)} \textit{Ellipticity (strong convexity)} There exists $\gamma>0$
such that

\begin{equation}
  \label{eq:3.convex}
    {\mathbb{C}}A \cdot A \geq  \gamma |A|^2, \qquad \hbox{in } \R^2,
\end{equation}
for every $2\times 2$ symmetric matrix $A$.

{III)} \textit{Dichotomy condition}
\begin{subequations}
\begin{eqnarray}
\label{3.D(x)bound} either &&  {\mathcal{D}}(x)>0,
\quad\hbox{for every } x\in \R^2, \\[2mm]
\label{3.D(x)bound 2} or && {\mathcal{D}}(x)=0, \quad\hbox{for every }
x\in \R^2,
\end{eqnarray}
\end{subequations}
where ${\mathcal{D}}(x)$ is defined by \eqref{3.D(x)}.

\begin{rem}
  \label{rem:dichotomy} Whenever \eqref{3.D(x)bound} holds we denote
\begin{equation}
    \label{delta-1}
    \delta_1=\min_{\R^2}{\mathcal{D}}.
\end{equation}
We emphasize that, in all the following statements, whenever a
constant is said to depend on $\delta_1$ (among other quantities) it
is understood that such dependence occurs \textit{only} when
\eqref{3.D(x)bound} holds.
\end{rem}

We shall refer to the set of constants $M_0$, $M_1$, $\delta_0$, $F$, $\gamma$, $M$, $\delta_1$ as the \emph{a priori data}.
The dependence on the thickness parameter $h$ will be omitted.

In the sequel we shall consider the following boundary value problem of mixed type

\begin{center}
\( {\displaystyle \left\{
\begin{array}{lr}
     {\rm div}({\rm div} (
      {\mathbb P}\nabla^2 w))=0,
      & \mathrm{in}\ \Omega \setminus \overline {D},
        \vspace{0.25em}\\
      ({\mathbb P} \nabla^2 w)n\cdot n=-\hat M_n, & \mathrm{on}\ \partial \Omega,
          \vspace{0.25em}\\
      {\rm div}({\mathbb P} \nabla^2 w)\cdot n+(({\mathbb P} \nabla^2
      w)n\cdot \tau),_s
      =(\hat M_\tau),_s, & \mathrm{on}\ \partial \Omega,
        \vspace{0.25em}\\
      w=0, &\mathrm{on}\ \partial D,
          \vspace{0.25em}\\
        \frac{\partial w}{\partial n} = 0, &\mathrm{on}\ \partial {D},
          \vspace{0.25em}\\
\end{array}
\right. } \) \vskip -8.9em
\begin{eqnarray}
& & \label{eq:dir-pbm-incl-rig-1}\\
& & \label{eq:dir-pbm-incl-rig-2}\\
& & \label{eq:dir-pbm-incl-rig-3}\\
& & \label{eq:dir-pbm-incl-rig-4}\\
& & \label{eq:dir-pbm-incl-rig-5}
\end{eqnarray}

\end{center}
coupled with the \emph{equilibrium conditions} for the rigid
inclusion $D$
\begin{multline}
  \label{eq:equil-rigid-incl}
    \int_{\partial D} \left ( {\rm div}({\mathbb P} \nabla^2 w)\cdot n+(({\mathbb P} \nabla^2
  w)n\cdot \tau),_s \right )g - (({\mathbb P} \nabla^2 w)n\cdot n)
  g_{,n} =0, \\   \quad \hbox{for every } g\in \mathcal{A}.
\end{multline}
By standard variational arguments, it is easy to see that problem
\eqref{eq:dir-pbm-incl-rig-1}--\eqref{eq:equil-rigid-incl} admits a unique solution
$w\in H^2(\Omega\setminus \overline{D})$ such that
\begin{equation}
    \label{eq:w_stima_H2}
    \|w\|_{H^2(\Omega\setminus \overline{D})}\leq C \rho_0^2\|\widehat{M}\|_{H^{-\frac{1}{2}}(\partial\Omega,\R^2)},
\end{equation}
where $C>0$ only depends on $\gamma$, $M_0$, and $M_1$.

\section{Statement and proof of the main result}

Here and in the sequel we shall denote by $G$ the connected
component of $\Omega \setminus   \overline{(D_1 \cup D_2)}$ such
that $\Sigma \subset \partial{G}$.

\begin{theo}[Stability result]
  \label{theo:Main}
Let $\Omega$ be a bounded domain in $\R^2$ satisfying \eqref{eq:bound_area} and \eqref{eq:reg_Omega}.
Let $D_i$, $i=1,2$, be two simply connected open
subsets of $\Omega$ satisfying \eqref{eq:compactness}
and \eqref{eq:reg_D}.
Moreover, let $\Sigma$ be an open portion of $\partial\Omega$
satisfying \eqref{eq:reg_Sigma}, \eqref{eq:large_enough} and \eqref{eq:small_enough}.
Let $\widehat{M}\in L^2(\partial\Omega,\R^2)$ satisfy
\eqref{eq:reg_M}--\eqref{eq:M_frequency} and let the plate tensor $\mathbb{P}$
given by \eqref{eq:P_def} satisfy the symmetry conditions \eqref{eq:sym-conditions-C-components},
the regularity condition \eqref{eq:3.bound_quantit},
the strong convexity condition \eqref{eq:3.convex} and the dichotomy condition.
Let $w_i\in
H^2(\Omega \setminus \overline{D_i})$ be the solution to
\eqref{eq:dir-pbm-incl-rig-1}--\eqref{eq:equil-rigid-incl}, when $D=D_i$, $i=1,2$. If,
given $\epsilon>0$, we have
\begin{equation}
    \label{eq:small_L2}
\min_{g \in
\cal{A}} \left\{\|w_1 - w_2 -g \|_{L^2(\Sigma)}+\rho_0
\left\|\frac{\partial}{\partial n}
(w_1 - w_2 -g) \right\|_{L^2(\Sigma)}\right\}\leq
\epsilon,
\end{equation}
then we have
\begin{equation}
    \label{eq:small_Haus_bound}
d_{\cal H}(\partial D_1,\partial D_2) \leq
\rho_0\omega\left(\frac{\epsilon}
{\rho_0^2\|\widehat{M}\|_{H^{-\frac{1}{2}}(\partial
\Omega,\R^2)}}\right)
\end{equation}
and
\begin{equation}
    \label{eq:small_Haus_inclusion}
d_{\cal H}( \overline{D_1},\overline{D_2} ) \leq
\rho_0\omega\left(\frac{\epsilon}
{\rho_0^2\|\widehat{M}\|_{H^{-\frac{1}{2}}(\partial
\Omega,\R^2)}}\right),
\end{equation}
where $\omega$ is an increasing continuous function on
$[0,\infty)$ which satisfies
\begin{equation}
    \label{eq:omega_loglog}
\omega(t)\leq C(\log|\log t|)^{-\eta}, \quad\hbox{for every }t, \
0<t<e^{-1},
\end{equation}
and $C$, $\eta$, $C>0$, $0<\eta\leq 1$, are constants only
depending on the a priori data.
\end{theo}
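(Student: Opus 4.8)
The plan is to follow the now-standard scheme for stability of unknown-boundary inverse problems (as pioneered by Alessandrini–Beretta–Rosset–Vessella and adapted to rigid inclusions in \cite{M-R1}), combining a quantitative Cauchy-data continuation estimate with the three spheres inequality for the plate operator. First I would fix the normalization: choose the affine function $g$ realizing the minimum in \eqref{eq:small_L2}, replace $w_1-w_2$ by $u:=w_1-w_2-g$ on the connected component $G$, and observe that $u$ solves $\operatorname{div}(\operatorname{div}(\mathbb{P}\nabla^2 u))=0$ in $G$ with Cauchy data on $\Sigma$ bounded by $\epsilon$ (after the usual interior regularity bootstrap that upgrades the $L^2$ Cauchy bound to a quantitative $H^{1/2}$-type bound, using \eqref{eq:M_frequency} and \eqref{eq:w_stima_H2}). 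The core analytic input is Proposition \ref{prop:Cauchy1}: a quantitative estimate of continuation from the Cauchy data on $\Sigma$ into the interior of $G$, of the form $\|u\|_{L^2(B_r(x))}\le C\,\rho_0^2\|\widehat M\|\,\omega_1(\epsilon/(\rho_0^2\|\widehat M\|))$ for $x$ at controlled distance from $\Sigma$, with $\omega_1$ of logarithmic type; this in turn rests on iterating the three spheres inequality \eqref{eq:3sph} along chains of balls, together with Proposition \ref{prop:LPS} for propagation of smallness from the interior.

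Next I would set up the contradiction/propagation argument on the symmetric difference. Let $d:=d_{\mathcal H}(\overline{D_1},\overline{D_2})$; by a standard argument (using \eqref{eq:compactness}, \eqref{eq:reg_D} and the relation between Hausdorff distance of the boundaries and of the closures) there is a point, say $x_0\in\partial D_1$, at distance $\sim d$ from $D_2$, around which there is a ball $B_{d/c}(y_0)\subset G\setminus\overline{D_2}$ touching $\partial D_1$. On this ball $w_2$ satisfies the plate equation while $w_1$ satisfies the homogeneous Dirichlet conditions $w_1=\partial_n w_1=0$ on the piece of $\partial D_1$ inside the ball. The strategy is: (i) from the Cauchy smallness on $\Sigma$ and the continuation estimate, $u=w_1-w_2-g$ is small in a fixed interior ball; (ii) propagate this smallness through $G$ up to $B_{d/c}(y_0)$ via the three spheres inequality, with the number of iterations controlled by the a priori data, obtaining $\|w_1-w_2-g\|_{L^2(B_{d/c}(y_0))}\le C\rho_0^2\|\widehat M\|\,\omega_1(\cdots)$ with a possibly worse (but still logarithmic-in-logarithmic, after optimizing the chain length over $d$) modulus; (iii) since $w_1$ and its normal derivative vanish on $\partial D_1$, use boundary regularity (Section 5) plus a lower bound for $w_2$ near $\partial D_1$ — here one needs that $w_2$, hence $w_1$ up to the affine $g$, cannot be too flat, which follows from the strong unique continuation / doubling-type estimates for the plate operator available in the interior under the dichotomy condition, or alternatively from a Poincaré-type argument exploiting that $g$ is affine and $D_1\ne D_2$. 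Comparing the upper bound from (ii) with the lower bound from (iii) forces $d$ to satisfy $d\le \rho_0\,\omega(\epsilon/(\rho_0^2\|\widehat M\|))$ with $\omega$ of log-log type, which is \eqref{eq:small_Haus_bound}; then \eqref{eq:small_Haus_inclusion} follows since for simply connected domains with $C^{3,1}$ (hence $C^{1,1}$) boundary the Hausdorff distance of the closures is controlled by that of the boundaries.

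The main obstacle, and the source of the \emph{double} logarithm, is step (iii): there is no quantitative strong unique continuation property at the boundary (no boundary doubling inequality or boundary three spheres inequality) for the plate operator, even in the isotropic case. Consequently one cannot directly estimate from below $\|w_2\|_{L^2(B_{d/c}(y_0))}$ in terms of a fixed-scale norm with only a power-of-$d$ loss; instead one is forced to pay an extra logarithm when transferring interior smallness across the unknown boundary $\partial D_1$, which after optimizing the length of the chain of balls in the propagation (balancing the exponential growth of the iteration constant against the shrinking scale $d$) degrades a single-logarithm estimate into the log-log estimate \eqref{eq:omega_loglog}. A secondary technical point is keeping all constants in the three spheres iteration uniform in terms of the a priori data only — this requires the regularity hypotheses \eqref{eq:reg_Omega}, \eqref{eq:reg_D}, the distance condition \eqref{eq:compactness}, and the quantitative bounds \eqref{eq:3.bound_quantit}, \eqref{eq:3.convex} on $\mathbb{P}$, and the careful geometric construction of the chains connecting the fixed interior ball to $B_{d/c}(y_0)$ while staying inside $G$.
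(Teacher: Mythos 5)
Your overall scaffolding (quantitative Cauchy continuation $+$ three--spheres propagation $+$ a lower bound in a small ball at scale $d$) is in the right spirit, but two of the load--bearing steps diverge from what the paper actually does, and in my view they diverge in a way that cannot be repaired without essentially reverting to the paper's scheme.

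First, your step (iii) is not a workable lower bound. You place a ball $B_{d/c}(y_0)\subset G$ touching $\partial D_1$, propagate smallness of $w_1-w_2-\bar g$ into it, note $w_1\approx0$ there from the Dirichlet data on $\partial D_1$, and then want $w_2$ (or $w_2+\bar g$) to be bounded below in that ball. There is no such lower bound: $w_2$ is an arbitrary solution near an arbitrary point of $\partial D_1$, and when $\bar g$ is comparable to $-w_2$ there the quantity you need to be large vanishes. The paper instead estimates the \emph{energy of a single solution in the symmetric difference}: Proposition \ref{prop:Cauchy1} gives an upper bound on $\int_{D_2\setminus\overline{D_1}}|\nabla^2 w_1|^2$ (obtained by integration by parts/Rellich on $\partial\tilde D_1^\rho\cup\partial\tilde D_2^\rho$, where the zero Cauchy data on $\partial D_1$, $\partial D_2$ and the propagated Cauchy smallness of $w$ are fed in), and Proposition \ref{prop:LPS} gives a lower bound on $\int_{B_\rho}|\nabla^2 w_1|^2$ in a ball $B_{td}\subset D_2\setminus\overline{D_1}$ (not in $G$). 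Comparing these two is the contradiction; no lower bound on $w_2$ near $\partial D_1$ is ever invoked.

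Second, you attribute the double logarithm to optimizing the chain length. That balancing is indeed how Proposition \ref{prop:Cauchy1} produces its $(\log|\log t|)^{-1/2}$ modulus, but when you then feed this into the exponential--type lower bound from Proposition \ref{prop:LPS} you get a \emph{triple}-logarithm estimate for $d$, not a double one. The paper recovers log--log by a bootstrap you do not mention: the rough log--log--log bound, via Proposition 3.6 of \cite{A-B-R-V}, guarantees that for $\epsilon$ small the set $G$ has $\partial G$ of Lipschitz class with constants controlled by the a priori data; this unlocks the second alternative in Proposition \ref{prop:Cauchy1}, giving the sharper $|\log t|^{-\sigma}$ modulus, and only then does the comparison with \eqref{eq:eta>} yield \eqref{eq:omega_loglog}. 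Without that geometric bootstrap your argument, even with the correct steps (i)--(ii), lands one logarithm short of the claimed estimate.
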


The proof of Theorem \ref{theo:Main} is obtained {from} the
following sequence of Propositions.

\begin{prop}[Lipschitz Propagation of Smallness]
  \label{prop:LPS}
Let $\Omega$ be a bounded domain in $\R^2$ satisfying \eqref{eq:bound_area} and \eqref{eq:reg_Omega}.
Let $D$ be an open simply connected subset of $\Omega$
satisfying \eqref{eq:compactness}, \eqref{eq:reg_D}. Let $w\in
H^2(\Omega \setminus \overline{D})$ be the solution to
\eqref{eq:dir-pbm-incl-rig-1}--\eqref{eq:equil-rigid-incl}, where the plate
tensor $\mathbb{P}$
given by \eqref{eq:P_def} satisfies \eqref{eq:sym-conditions-C-components},
\eqref{eq:3.bound_quantit},
\eqref{eq:3.convex} and the dichotomy condition.
Let the couple field $\widehat{M}$ satisfy
\eqref{eq:reg_M}--\eqref{eq:M_frequency}.

There exists $s>1$, only depending on $\gamma$, $M$, $\delta_1$, $M_0$
and $\delta_0$, such that for every $\rho>0$ and every $\bar x\in
(\Omega\setminus \overline{D})_{s\rho}$, we have
\begin{equation}
   \label{eq:LPS}
\int_{B_\rho(\bar x)}|\nabla^2 w|^2\geq
\frac{C\rho_0^2}{\exp\left[A\left(\frac{\rho_0}{\rho}\right)^B\right]}
\| \widehat{M} \|_{H^{-\frac{1}{2}}(\partial\Omega,\R^2)}^2,
\end{equation}
where $A>0$, $B>0$ and $C>0$ only depend on $\gamma$, $M$, $\delta_1$, $M_0$, $M_1$, $\delta_0$ and $F$.
\end{prop}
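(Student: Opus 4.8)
\medskip

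The plan is to combine three ingredients: (i) a quantitative lower bound for the energy of $w$ over the whole domain $\Omega\setminus\overline D$ in terms of $\|\widehat M\|_{H^{-1/2}}$; (ii) the Three Spheres Inequality \eqref{eq:3sph} for solutions of the plate equation \eqref{eq:dir-pbm-incl-rig-1}, which holds under the dichotomy condition; and (iii) a chain-of-balls (Harnack-type propagation) argument that transfers the global lower bound down to any ball $B_\rho(\bar x)$ with $\bar x\in(\Omega\setminus\overline D)_{s\rho}$. This is by now a standard scheme for proving Lipschitz propagation of smallness (cf.\ the references cited for $\cite{M-R1}$), and the only genuinely ``plate-specific'' input is the Three Spheres Inequality for the fourth-order operator $\mathrm{div}(\mathrm{div}(\mathbb P\nabla^2\cdot))$.

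\medskip

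\textbf{Step 1: Global energy lower bound.} First I would show that there exist $\rho_1>0$ and a constant $c>0$, depending only on the a priori data, and a point $x_1\in(\Omega\setminus\overline D)_{\rho_1}$ such that
\begin{equation*}
\int_{B_{\rho_1}(x_1)}|\nabla^2 w|^2 \geq c\,\rho_0^2\,\|\widehat M\|_{H^{-1/2}(\partial\Omega,\R^2)}^2 .
\end{equation*}
The natural way is to test the weak formulation of \eqref{eq:dir-pbm-incl-rig-1}--\eqref{eq:equil-rigid-incl} with $w$ itself (legitimate since $w$ vanishes to first order on $\partial D$), obtaining $\int_{\Omega\setminus\overline D}\mathbb P\nabla^2 w\cdot\nabla^2 w = \int_{\partial\Omega}(\widehat M_\tau)_{,s}\,w - \widehat M_n\,\partial_n w$; then by strong convexity \eqref{eq:3.convex}, trace inequalities, a Poincaré inequality on $\Omega\setminus\overline D$ (using $w=0$, $\partial_n w=0$ on $\partial D$), and the frequency bound \eqref{eq:M_frequency}, one gets a lower bound on $\int_{\Omega\setminus\overline D}|\nabla^2 w|^2$ of the form $c\rho_0^2\|\widehat M\|_{H^{-1/2}}^2$ --- here one must be careful that the right-hand side does not vanish, which is guaranteed by $(\widehat M_n,(\widehat M_\tau)_{,s})\not\equiv 0$ in \eqref{eq:reg_M} together with a suitable well-posedness/duality argument. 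Averaging over a covering of $(\Omega\setminus\overline D)_{\rho_1}$ by balls of radius $\rho_1$ (whose number is controlled by $M_1$) yields such a ``fat'' ball $B_{\rho_1}(x_1)$. The nontrivial point in this step is to rule out that all the energy concentrates in a thin boundary layer near $\partial D$ or $\partial\Omega$; this is handled by the regularity estimates for $w$ (cited as coming from Section 5) which give $H^2$, indeed better, bounds up to the boundary, so that a fixed positive fraction of the energy sits in the interior $(\Omega\setminus\overline D)_{\rho_1}$.

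\medskip

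\textbf{Step 2: Propagation via Three Spheres.} Given an arbitrary $\bar x\in(\Omega\setminus\overline D)_{s\rho}$, I would connect $x_1$ to $\bar x$ by an arc $\gamma$ contained in $(\Omega\setminus\overline D)_{s\rho/?}$ --- more precisely, in a slightly shrunk interior region --- whose length is bounded by $C\rho_0$ (the length and the distance of $\gamma$ from $\partial(\Omega\setminus\overline D)$ are controlled by $M_0$, $M_1$ and the $C^{3,1}$--character of the boundaries, which also control the number of connected components of $\Omega\setminus\overline D$). Covering $\gamma$ by a chain of overlapping balls $B_{r_j}(y_j)$, $y_0=x_1$, $y_N=\bar x$, with $N\le C(\rho_0/\rho)$ and radii comparable to $\rho$, and applying the Three Spheres Inequality \eqref{eq:3sph} to $w$ (whose $\nabla^2$--energy plays the role of the relevant quantity) successively along the chain, the local lower bound on $B_{r_0}(x_1)$ propagates to $B_\rho(\bar x)$ with the compounding of the Hölder exponents producing a factor of the form $\exp[-A(\rho_0/\rho)^B]$. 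A global a priori upper bound $\int_{\Omega\setminus\overline D}|\nabla^2 w|^2\le C\rho_0^2\|\widehat M\|_{H^{-1/2}}^2$, which follows from \eqref{eq:w_stima_H2}, is needed as the denominator in each application of the three spheres inequality; combining the chain estimate with Step 1 gives exactly \eqref{eq:LPS}.

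\medskip

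\textbf{Main obstacle.} I expect the principal difficulty to be not in the propagation mechanism (which is routine once the Three Spheres Inequality is available) but in Step 1: obtaining a lower bound for the global energy $\int_{\Omega\setminus\overline D}|\nabla^2 w|^2$ that is \emph{quantitative} and \emph{stable} in the a priori data, in particular one that depends on $D$ only through the a priori constants $\rho_0,M_0,M_1$ and not on finer features of $\partial D$. This requires a Poincaré/Korn-type inequality on the (possibly non-smooth-in-$D$) domain $\Omega\setminus\overline D$ with constants controlled solely by the a priori data, together with a lower bound for $\|\widehat M\|_{H^{-1/2}}$ in terms of the Cauchy data of $w$ on $\partial\Omega$ --- the latter uses the well-posedness estimate \eqref{eq:w_stima_H2} in a dual form. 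Once these are in place, the constants $s>1$, $A,B,C>0$ in the statement emerge from tracking the dependencies through Steps 1 and 2.
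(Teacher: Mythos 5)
Your overall architecture (global energy control + Three Spheres Inequality + chain-of-disks propagation) matches the paper's approach, which itself follows Proposition~4.2 of \cite{M-R-V5}. However there is a genuine gap in Step~1 as written. Testing the weak formulation with $w$ itself gives
$\int_{\Omega\setminus\overline D}\mathbb P\nabla^2w\cdot\nabla^2w=\langle\widehat M,\,\text{Cauchy data of }w\rangle$,
and estimating the right side from above by $\|\widehat M\|_{H^{-1/2}}\|w\|_{H^2}$, then using Poincar\'e, yields only $\|\nabla^2 w\|_{L^2}\leq C\|\widehat M\|_{H^{-1/2}}$ --- the \emph{upper} bound of \eqref{eq:w_stima_H2}, not the lower bound you need. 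The inequality in the other direction, $\|\widehat M\|_{H^{-1/2}(\partial\Omega,\R^2)}\leq C\|\nabla^2 w\|_{L^2(\Omega\setminus\overline D)}$, cannot be obtained by testing with $w$ alone: it is a duality estimate, proven by testing the weak formulation against \emph{arbitrary} admissible $\varphi$ and taking a supremum over Cauchy data of norm one. This is exactly the content of Lemma~\ref{lem:trace} in the paper (modelled on Lemma~7.1 of \cite{M-R-V1}), and it is the actual engine of the global lower bound. You do gesture at this in your ``Main obstacle'' paragraph (``in a dual form''), but the step as proposed would fail.

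There is also a smaller structural difference worth noting. The paper does not pick out a single ``fat'' ball $B_{\rho_1}(x_1)$ and propagate from it. Instead, the chain argument is run in the reverse direction: it bounds the energy on \emph{every} small ball in the interior region $(\Omega\setminus\overline D)_{(s+1)\rho}$ by the energy on $B_\rho(\bar x)$, yielding \eqref{eq:reverse}, whose useful content hinges on showing that the interior holds at least half of the total $L^2(\nabla^2w)$ mass. That ratio is controlled not by an averaging/pigeonhole argument over a covering but by a quantitative H\"older--Sobolev interpolation: one bounds the boundary-layer energy $\int_{(\Omega\setminus\overline D)\setminus(\Omega\setminus\overline D)_{(s+1)\rho}}|\nabla^2w|^2$ by the layer's measure (which is $O(\rho)$, from \eqref{eq:AR}) times $\|\nabla^2w\|_{L^4}^2$, then uses the Sobolev embedding $H^{1/2}\hookrightarrow L^4$ together with the interpolated regularity $\|w\|_{H^{5/2}}\leq C\|\widehat M\|_{L^2}$ (between \eqref{eq:w_stima_H2} and the global $H^3$ estimate of Lemma~\ref{lem:H^3}), and finally the frequency bound \eqref{eq:M_frequency} and Lemma~\ref{lem:trace}. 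This is why the $H^3$ regularity of Lemma~\ref{lem:H^3} and the frequency assumption enter, which your plan does not mention. An averaging argument as you propose could in principle work but would need its own quantitative version of ``not too much energy escapes to a thin boundary layer,'' and that is precisely what the Sobolev/interpolation step provides; without something of that sort Step~1 is incomplete.
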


\begin{prop}[Stability Estimate of Continuation
{from} Cauchy Data]
  \label{prop:Cauchy1}
Let the hypotheses of Theorem \ref{theo:Main} be satisfied. We
have
\begin{equation}
   \label{eq:Cauchy1}
\int_{D_2\setminus \overline{D_1}}|\nabla^2 w_1|^2\leq
\rho_0^2\|\widehat{M}\|_{H^{-\frac{1}{2}}(\partial \Omega
,\R^2)}^2\omega\left(\frac{\epsilon}
{\rho_0^2\|\widehat{M}\|_{H^{-\frac{1}{2}}(\partial
\Omega ,\R^2)}}\right),
\end{equation}
\begin{equation}
   \label{eq:Cauchy1bis}
\int_{D_1\setminus \overline{D_2}}|\nabla^2 w_2|^2\leq
\rho_0^2\|\widehat{M}\|_{H^{-\frac{1}{2}}(\partial \Omega
,\R^2)}^2\omega\left(\frac{\epsilon}
{\rho_0^2\|\widehat{M}\|_{H^{-\frac{1}{2}}(\partial
\Omega ,\R^2)}}\right),
\end{equation}
where $\omega$ is an increasing continuous function on
$[0,\infty)$ which satisfies
\begin{equation}
   \label{eq:omega_cauchy_loglog}
\omega(t)\leq C(\log|\log t|)^{-\frac{1}{2}},\qquad \hbox{for every }
t<e^{-1},
\end{equation}
with $C>0$ only depending on $\gamma$, $M$, $\delta_1$,
$M_0$, $M_1$ and $\delta_0$.

If we assume, in addition, that there exist $L>0$ and $\tilde
\rho_0$, $0<\tilde \rho_0\leq \rho_0$, such that $\partial G$ is
of {\it Lipschitz class with constants} $\tilde \rho_0$, $L$, then
\eqref{eq:Cauchy1}--\eqref{eq:Cauchy1bis} hold with $\omega$
given by
\begin{equation}
   \label{eq:omega_cauchy_log}
\omega(t)\leq C|\log t|^{-\sigma},\qquad \hbox{\rm for  every }
t<1,
\end{equation}
where $\sigma>0$ and $C>0$ only depend on $\gamma$, $M$, $\delta_1$,
$M_0$, $M_1$, $\delta_0$, $L$ and $\frac{\tilde
\rho_0}{\rho_0}$.
\end{prop}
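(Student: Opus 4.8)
\textbf{Proof plan for Proposition \ref{prop:Cauchy1}.}

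The plan is to combine the two continuation estimates available from the previous section—namely, continuation from Cauchy data on $\Sigma$ into the interior, and Lipschitz propagation of smallness (Proposition \ref{prop:LPS})—into a bound on $\int_{D_2\setminus\overline{D_1}}|\nabla^2 w_1|^2$. First I would set $v=w_1-w_2-g$, where $g\in\mathcal A$ achieves the minimum in \eqref{eq:small_L2}; since both $w_i$ solve the plate equation \eqref{eq:dir-pbm-incl-rig-1} in $G$ (the relevant connected component of $\Omega\setminus\overline{D_1\cup D_2}$) and affine functions lie in the kernel of $\mathrm{div}\,\mathrm{div}(\mathbb P\nabla^2\cdot)$, the difference $v$ solves the same equation in $G$. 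Using the Neumann conditions \eqref{eq:dir-pbm-incl-rig-2}–\eqref{eq:dir-pbm-incl-rig-3} satisfied by $w_1$ and $w_2$ on $\partial\Omega$ together with $\mathrm{supp}(\widehat M)\subset\subset\Sigma$, the function $v$ has small Cauchy data on $\Sigma$: both $v$ and its normal derivative are $O(\epsilon)$ in $L^2(\Sigma)$, and the bending/twisting tractions vanish on the remaining part of $\partial\Omega$ that borders $G$. One then invokes the stability estimate for continuation from Cauchy data (in the form it is stated and proved in Section 4, resting on the Three Spheres Inequality \eqref{eq:3sph}) to propagate this smallness from $\Sigma$ into the interior of $G$ up to a region near $\partial D_1$: for a suitable reference ball $B_{\rho_0/c}(\bar x)\subset G$ one gets $\int_{B_{\rho_0/c}(\bar x)}|\nabla^2 v|^2\le \rho_0^2\|\widehat M\|^2\,\omega_1(\epsilon/(\rho_0^2\|\widehat M\|))$ with $\omega_1$ of the single-logarithm type.

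The key geometric point is to reach the set $D_2\setminus\overline{D_1}$. Since $D_2\setminus\overline{D_1}$ need not be inside $G$—indeed it lies across the interface $\partial D_1$—I would instead argue on the side where $w_1$ is smooth, exploiting that $w_1$ is defined and solves the equation in $\Omega\setminus\overline{D_1}\supset D_2\setminus\overline{D_1}$. The chain runs the other way: on $\partial D_1$ the solution $w_1$ satisfies the homogeneous Dirichlet conditions $w_1=0$, $\partial_n w_1=0$ (\eqref{eq:dir-pbm-incl-rig-4}–\eqref{eq:dir-pbm-incl-rig-5}), so $w_1$ has \emph{zero} Cauchy data on $\partial D_1$, while on a portion of $\partial(D_2\setminus\overline{D_1})\cap\partial D_2$—which sits inside the region $G$ where smallness has already been propagated—$|\nabla^2 w_1|$ is small because $w_1=w_2-g+v$ there, $\nabla^2 g=0$, $\nabla^2 w_2=0$ on $\partial D_2$ (from $w_2=0$, $\partial_n w_2=0$ on $\partial D_2$), and $\nabla^2 v$ is controlled by the Cauchy-data continuation. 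Iterating the Three Spheres Inequality along a chain of balls inside $\Omega\setminus\overline{D_1}$ connecting the region near $\partial D_2$ to an arbitrary point of $D_2\setminus\overline{D_1}$, together with the a priori $H^2$ bound \eqref{eq:w_stima_H2} as the global majorant, yields the pointwise-in-region smallness of $\int_{B_\rho}|\nabla^2 w_1|^2$; covering $D_2\setminus\overline{D_1}$ by finitely many such balls and summing gives \eqref{eq:Cauchy1}. The estimate \eqref{eq:Cauchy1bis} follows by exchanging the roles of $w_1$ and $w_2$, $D_1$ and $D_2$.

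The composition of the logarithmic modulus from the Cauchy-data step with the logarithmic modulus arising from the iterated Three Spheres Inequality (where the number of balls in the chain, and hence the exponent, degrades as the chain approaches $\partial D_1$) produces the log-log modulus \eqref{eq:omega_cauchy_loglog}; tracking the exponent $\tfrac12$ is a matter of bookkeeping the Hölder exponents through the finitely many iterations, and requires care but no new idea. For the improved single-logarithm estimate \eqref{eq:omega_cauchy_log}, the extra hypothesis that $\partial G$ is Lipschitz with constants $\tilde\rho_0$, $L$ lets one replace the interior chain argument near $\partial D$ by a continuation estimate \emph{up to the Lipschitz boundary} of $G$—a stability estimate for the Cauchy problem in a domain with Lipschitz boundary, of the type established in \cite{A-R-R-V}—so that the distance from the reference ball to the target region is controlled in terms of $\tilde\rho_0$ alone and does not shrink, avoiding the second logarithm. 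The main obstacle is exactly this last point: without the Lipschitz a priori control on $\partial G$, one cannot propagate smallness uniformly up to the interface, so the number of iterations needed depends on how close $D_1$ and $D_2$ are, forcing the weaker log-log rate; handling the interface carefully—making sure $w_1$ is genuinely a solution on the side $\Omega\setminus\overline{D_1}$ throughout the chain, and that the reference ball used for Proposition \ref{prop:LPS} can always be fit inside $(\Omega\setminus\overline{D})_{s\rho}$—is where the technical weight of the argument lies.
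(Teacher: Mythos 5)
There is a genuine gap in your argument, right at the pivot from the continuation region $G$ to the target set $D_2\setminus\overline{D_1}$. You claim that $\nabla^2 w_2=0$ on $\partial D_2$ because $w_2=0$ and $\partial_n w_2=0$ there. That is false: the homogeneous Dirichlet conditions give $w_2=0$ and $\nabla w_2=0$ on $\partial D_2$, and hence the purely tangential and mixed second derivatives vanish along $\partial D_2$, but the second normal derivative $\partial_{\nu\nu}w_2$ does \emph{not} vanish in general. (If it did, $w_2$ extended by zero into $D_2$ would be $H^3$, i.e.\ $w_2$ would vanish to second order, which is not a consequence of the clamped condition.) As a result, on or near $\partial D_2$ one has $\nabla^2 w_1=\nabla^2 v+\nabla^2 w_2$, and no smallness of $\nabla^2 v$ makes $\nabla^2 w_1$ small there. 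Your three-spheres chain inside $\Omega\setminus\overline{D_1}$ therefore has no small reference ball to start from, and the whole iteration into $D_2\setminus\overline{D_1}$ collapses. (A secondary issue of the same flavour: $\partial D_2$ is on $\partial G$, not inside $G$, so even for the quantities that \emph{are} small you cannot quote interior propagation right up to it; one needs a regularized surrogate at distance $\sim\rho$.)

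The paper avoids this by never trying to control $\nabla^2 w_1$ pointwise near $\partial D_2$. Instead it introduces the regularized domains $\tilde D_i^\rho$ of Lemma \ref{lem:reg_domain}, splits off the thin layer $(\tilde D_1^\rho\setminus\overline{D_1})\setminus\overline{G}$ whose measure is $O(\rho)$, and then uses the strong convexity \eqref{eq:3.convex} together with two integrations by parts of $\int\mathbb P\nabla^2 w_1\cdot\nabla^2 w_1$ over $(\Omega\setminus\tilde V_\rho)\setminus\overline{\tilde D_1^\rho}$. This converts the interior energy into boundary integrals over $\tilde\Gamma_1^\rho$ and $\tilde\Gamma_2^\rho$ in which the second- and third-order quantities $B_2(w_1)$, $\widetilde B_2(w_1)$, $B_3(w_1)$ (bounded a priori by Lemmas \ref{lem:H^4}, \ref{lem:C^1,1}) multiply only $w_1$ and $\nabla w_1$ on those curves. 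These zero- and first-order traces are genuinely $O(\rho)$ on $\tilde\Gamma_1^\rho$ (by the clamped condition on $\partial D_1$ at distance $\gamma_1\rho$), and on $\tilde\Gamma_2^\rho$ they are bounded by $O(\rho)+\max_{\tilde\Gamma_2^\rho}(|w|+\rho_0|\nabla w|)$, which is the quantity the three-spheres chain in $G$ and the Cauchy-data stability estimate of \cite{M-R-V4} actually control. You would need to adopt this integration-by-parts step; without it the argument cannot be repaired by better bookkeeping of the Hölder exponents. The final optimization over $\rho$ of $\,\rho/\rho_0+(\rho_0/\rho)\,\tilde\epsilon^{\tau\delta^s/4}$, with $s\sim(\rho_0/\rho)^2$, is what generates the $\log\log$ modulus; the Lipschitz refinement indeed follows the geometric construction of \cite{M-R1}, which you described only in outline.
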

\begin{proof} [Proof of Theorem ~\ref{theo:Main}]
Let us denote, for simplicity, $d=d_{\cal H}(\partial D_1,\partial
D_2)$. Let us see that, if $\eta>0$ is such that
\begin{equation}
   \label{eq:eta}
\int_{D_2 \setminus \overline{D_1}}|\nabla^2 w_1|^2 \leq\ \frac{\eta}{\rho_0^2}, \quad
\quad \int_{D_1 \setminus \overline{D_2}}|\nabla^2 w_2|^2 \leq \frac{\eta}{\rho_0^2},
\end{equation}
then we have
\begin{equation}
   \label{eq:d<eta}
d\leq
C\rho_0\left[\log\left(\frac{C\rho_0^4\|\widehat{M}\|_{H^{-\frac{1}{2}}(\partial
\Omega ,\R^n)}^2} {\eta}\right)\right]^{-\frac{1}{B}},
\end{equation}
where $B>0$ and $C>0$ only depend on $\gamma$, $M$, $\delta_1$,
$M_0$, $M_1$, $\delta_0$ and $F$.

We may assume, with no loss of generality, that there exists
$x_0\in \partial D_1$ such that $\hbox{dist}(x_0,\partial D_2)=d$.
Let us distinguish two cases:

\item{i)} $B_d(x_0) \subset D_2$;
\item{ii)} $B_d(x_0) \cap D_2 = \emptyset$.

In case i), by the regularity assumptions made on $\partial D_1$,
there exists $x_1\in D_2 \setminus D_1$ such that
$B_{td}(x_1)\subset D_2 \setminus D_1$, with
$t=\frac{1}{1+\sqrt{1+M_0^2}}$.

By \eqref{eq:eta} and by Proposition \ref{prop:LPS} with
$\rho=\frac{td}{s}$, we have
\begin{equation}
   \label{eq:eta>}
\eta\geq\frac{C\rho_0^4}{\exp{\left[A\left(\frac{s\rho_0}{td}\right)^B\right]}}
\|\widehat{M}\|_{H^{-\frac{1}{2}}(\partial \Omega ,\R^2)}^2,
   \end{equation}
where $A>0$, $B>0$ and $C>0$ only depend on $\gamma$, $M$, $\delta_1$,
$M_0$, $M_1$, $\delta_0$ and $F$.

By \eqref{eq:eta>} we easily find \eqref{eq:d<eta}.

Case ii) can be treated similarly by substituting $w_1$ with
$w_2$.

Hence, by Proposition \ref{prop:Cauchy1} and assuming $\epsilon <
e^{-e} \rho_0^2
\|\widehat{M}\|_{H^{-\frac{1}{2}}(\partial \Omega ,\R^2)}$ we obtain
\begin{equation}
   \label{eq:d<logloglog}
d\leq
C\rho_0\left\{\log\left[\log\left|\log\frac{\epsilon}{\rho_0^2
\|\widehat{M}\|_{H^{-\frac{1}{2}}(\partial \Omega,\R^2)}}\right|
\right]\right\}^{-\frac{1}{B}},
\end{equation}
where $B>0$ and $C>0$ only depend on $\gamma$, $M$, $\delta_1$,
$M_0$, $M_1$, $\delta_0$ and $F$.

Thus we have obtained a stability estimate of $\log$-$\log$-$\log$
type.

In order to prove an analogous estimate for the Hausdorff distance
between $\overline{D_1}$ and $\overline{D_2}$, let us now set
$d=d_{\cal H}(\overline{D_1},
\overline{D_2})$ and let us assume, with no loss of
generality, that there exists $x_0\in\overline{D_1}$ such that
$\hbox{dist}(x_0,\overline{D_2})=d$. If $B_d(x_0)\subset D_1$,
then $B_d(x_0)\subset D_1\setminus D_2$ and \eqref{eq:eta>}
follows with $t$ replaced by $1$. If, otherwise, $B_d(x_0)
\nsubseteq D_1$, then $\hbox{dist}(x_0,\partial D_1)\leq d$, and
we can distinguish two cases:

\item{i)} $\hbox{dist}(x_0,\partial D_1)>\frac{d}{2}$,
\item{ii)} $\hbox{dist}(x_0,\partial D_1)\leq\frac{d}{2}$.

When i) holds, then $B_{\frac{d}{2}}(x_0)\subset D_1\setminus D_2$
and again \eqref{eq:eta>} follows with $t$ replaced by
$\frac{1}{2}$. When ii) holds, there exists $y_0\in\partial D_1$
such that $|y_0-x_0| \leq \frac{d}{2}$. Therefore there exists
$y_1\in D_1$ such that $B_{\frac{td}{2}}(y_1)\subset D_1\setminus
D_2$, and \eqref{eq:eta>} follows with $t$ replaced by
$\frac{t}{2}$. From \eqref{eq:eta>}, arguing as above, we obtain \eqref{eq:d<logloglog}
for $d=d_{\cal H}(\overline{D_1},
\overline{D_2})$.

Next, by this rough estimate, we can apply Proposition 3.6 in \cite{A-B-R-V}  which ensure that we can find $\epsilon_0>0$,
only depending on $\gamma$, $M$, $\delta_1$,
$M_0$, $M_1$, $\delta_0$ and $F$, such that if $\epsilon\leq \epsilon_0$ then
$\partial G$ is of Lipschitz class with constants $\widetilde{\rho_0}$, $L$, with
$L$ and $\frac{\widetilde{\rho_0}}{\rho_0}$ only depending on $M_0$.
By the second part of
Proposition \ref{prop:Cauchy1}, the $\log$-$\log$
type estimates \eqref{eq:small_Haus_bound}, \eqref{eq:small_Haus_inclusion},
\eqref{eq:omega_loglog} follow.
\end{proof}
\section{Proofs of Propositions \ref{prop:LPS} and \ref{prop:Cauchy1}}.

\noindent
We need to premise some auxiliary results. The first one is the basic tool
of our approach, the three spheres inequality.

\begin{prop} [Three Spheres Inequality (\cite{M-R-V5}, Proposition 5.1)]
\label{prop:3spheres} Let $\Omega$ be a domain in $\R^2$, and let the plate tensor $\mathbb{P}$
given by \eqref{eq:P_def} satisfy \eqref{eq:sym-conditions-C-components},
\eqref{eq:3.bound_quantit},
\eqref{eq:3.convex} and the dichotomy condition. Let $w\in
H^2(\Omega)$ be a weak solution to the equation
\begin{equation}
  \label{eq:plate_eq}
{\rm div}({\rm div} (
      {\mathbb P}\nabla^2 w))=0,
       \quad \hbox{in }\Omega.
\end{equation}

For every $r_1, r_2, r_3, \overline{r}$, $0<r_1<r_2<r_3\leq \overline{r}$, and
for every $x\in \Omega_{\overline{r}}$ we have
\begin{equation}
  \label{eq:3sph}
   \int_{B_{r_{2}}(x)}|\nabla ^2w|^{2} \leq C
   \left(  \int_{B_{r_{1}}(x)}|\nabla ^2w|^{2}
   \right)^{\delta}\left(  \int_{B_{r_{3}}(x)}|\nabla ^2w|^{2}
   \right)
   ^{1-\delta},
\end{equation}
where $C>0$ and $\delta$, $0<\delta<1$, only depend on
$\gamma$, $M$, $\delta_1$, $\frac{r_{3}}{r_{2}}$ and
$\frac{r_{3}}{r_{1}}$.
\end{prop}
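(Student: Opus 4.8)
The plan is to deduce \eqref{eq:3sph} from a Carleman estimate for the fourth order operator $Lw:=\mathrm{div}(\mathrm{div}(\mathbb{P}\nabla^2 w))$, following the classical scheme by which three spheres inequalities are obtained from weighted a priori estimates for the associated operator. The argument splits according to the two branches of the dichotomy condition \eqref{3.D(x)bound}--\eqref{3.D(x)bound 2}.

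\emph{Step 1 (normalization).} By a translation we may take $x=0$, and by the rescaling $y=x/\overline r$ we may take $\overline r=1$; this preserves the ellipticity constant, changes the regularity bound only by a factor controlled by $M$ and $\mathrm{diam}(\Omega)$, and multiplies $\mathcal D$ in \eqref{3.D(x)} by a positive constant, so $\delta_1$ is replaced by a comparable constant. Thus it suffices to argue in $B_1$ with $0<r_1<r_2<r_3\le 1$. Throughout, interior elliptic (Caccioppoli) estimates for $Lw=0$ allow us to pass freely between $L^2$ norms of $\nabla^2 w$ on a ball and $L^2$ norms of $\nabla^k w$, $k\le 3$, on slightly larger concentric balls.

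\emph{Step 2 (structure of the operator).} Freezing the coefficients at $0$, the principal symbol of $L$ is, by strong convexity \eqref{eq:3.convex}, a real homogeneous quartic in $\xi$ with no real zeros, i.e. $a_0\prod_{j=1}^{4}(\xi_1-\mu_j\xi_2)$ with $\mathrm{Im}\,\mu_j\neq0$, the $\mu_j$ occurring in two conjugate pairs; the $\mu_j$ are the roots of the characteristic polynomial $a_0 t^4+a_1t^3+a_2t^2+a_3t+a_4$ with $a_i$ as in \eqref{3.coeffsmall}. The matrix $S(x)$ in \eqref{3. S(x)} is the Sylvester matrix of this quartic and of its derivative, so $\mathcal D$ in \eqref{3.D(x)} is, up to a positive factor, the modulus of its discriminant: under \eqref{3.D(x)bound} the four roots stay pairwise separated uniformly (quantitatively in terms of $\delta_1$ and $\gamma$), hence $L$ factors, modulo lower order terms with $C^{0,1}$ coefficients bounded by $M$, as a product $L_2'L_2''$ of two second order strongly elliptic operators with $C^{1,1}$ coefficients; under \eqref{3.D(x)bound 2} the operator is, modulo such lower order terms, the square $(L_2)^2$ of a single second order strongly elliptic operator. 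In either case the resulting (principal part of the) operator satisfies the pseudo-convexity condition with respect to a radial weight that is needed for a Carleman estimate — and it is precisely here that the dichotomy is used, since a genuine mixing of the two regimes would allow Pli\'s-type counterexamples to unique continuation.

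\emph{Step 3 (Carleman estimate).} We establish, for a weight $\phi(x)=\varphi(|x|)$ with $\varphi$ decreasing and suitably log-convex (equivalently, after a logarithmic change of variable, a weight of the type $|x|^{-\beta}$), an estimate of the form
\[
\sum_{|\alpha|\le 3}\tau^{\,6-2|\alpha|}\int e^{2\tau\phi}|x|^{\,2|\alpha|}\,|\nabla^{\alpha}u|^2\ \le\ C\int e^{2\tau\phi}|Lu|^2,\qquad u\in C^\infty_0(B_1\setminus\{0\}),
\]
valid for all $\tau\ge\tau_0$, with $C$ and $\tau_0$ depending only on $\gamma$, $M$ and $\delta_1$. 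In the nondegenerate case this is obtained by composing the standard second order Carleman estimates for the factors $L_2'$, $L_2''$ of Step 2; in the degenerate case it is the Carleman estimate for $(L_2)^2$ obtained by iterating the one for $L_2$. The lower order terms produced in Step 2 are absorbed into the left hand side by choosing $\tau_0$ large, using the bound $M$ on their coefficients.

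\emph{Step 4 (from Carleman to three spheres).} Apply the above to $u=\zeta w$, where $\zeta$ is a cutoff equal to $1$ on $B_{r_2}\setminus B_{r_1}$, supported in $B_{(r_2+r_3)/2}\setminus B_{r_1/2}$ — so that $u$ avoids the singularity of $\phi$ at $0$ — and with derivatives of $\zeta$ supported in the two thin annuli $B_{r_1}\setminus B_{r_1/2}$ and $B_{(r_2+r_3)/2}\setminus B_{r_2}$. Then $Lu=[L,\zeta]w$ is supported there, and one bounds its weighted $L^2$ norm using the monotonicity of $\varphi$: the contribution near $r_1$ carries the factor $e^{2\tau\varphi(r_1/2)}$ and that near $r_3$ the factor $e^{2\tau\varphi(r_2)}$. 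On the left hand side retain only the integral over $B_{r_2}$, where $e^{2\tau\phi}\ge e^{2\tau\varphi(r_2)}$. Rearranging and optimizing over $\tau\ge\tau_0$ (balancing the two exponentials against the ``$r_3$'' term) yields, after the Caccioppoli step of Step 1,
\[
\int_{B_{r_2}(0)}|\nabla^2 w|^2\ \le\ C\Big(\int_{B_{r_1}(0)}|\nabla^2 w|^2\Big)^{\delta}\Big(\int_{B_{r_3}(0)}|\nabla^2 w|^2\Big)^{1-\delta},
\]
with $C>0$ and $0<\delta<1$ depending only on $\gamma$, $M$, $\delta_1$ and the ratios $r_3/r_2$, $r_3/r_1$, which is \eqref{eq:3sph}.

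\emph{Main obstacle.} The delicate point is the degenerate branch \eqref{3.D(x)bound 2}: for $(L_2)^2$ the Carleman estimate requires a careful choice of weight and a nontrivial absorption of first order cross terms, and one must verify that all constants depend on the anisotropy only through $\gamma$, $M$ and $\delta_1$, uniformly as $\mathcal D\to0$ within the degenerate regime; equivalently, one must ensure the pseudo-convexity in Step 2 is not lost in the limit. The nondegenerate case is comparatively standard once the uniform root separation of Step 2 is quantified.
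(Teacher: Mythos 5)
This proposition is \emph{not proved in the present paper}: it is imported verbatim from \cite{M-R-V5} (Proposition 5.1), which in turn builds on \cite{M-R-V4} (``Sharp three sphere inequality for perturbations of a product of two second order elliptic operators\dots''). So there is no in-paper argument against which to check you line by line; I can only compare your outline against the route those references take.

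With that caveat, your outline is faithful to that route. You correctly read $S(x)$ in \eqref{3. S(x)} as the Sylvester matrix of the characteristic quartic $a_0t^4+a_1t^3+a_2t^2+a_3t+a_4$ and of its derivative, hence $\mathcal D$ in \eqref{3.D(x)} as (a positive multiple of) the discriminant; you correctly observe that strong convexity forces the four roots to be non-real and to come in conjugate pairs, so that the two branches of the dichotomy mean, globally, ``two distinct elliptic quadratic factors'' or ``a perfect square of one elliptic quadratic factor,'' which is exactly the $L_2' L_2''$ versus $(L_2)^2$ split that drives \cite{M-R-V4}. The Carleman-to-three-spheres passage in your Step 4 is the standard one. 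Two small inaccuracies in Step 1: $\mathcal D$ is invariant (not merely comparable) under the rescaling $y=x/\overline r$ since it involves no derivatives of the coefficients, and the ``$\mathrm{diam}(\Omega)$'' dependence you invoke is spurious — with the $\rho_0$-homogeneous normalization of Remark~\ref{rem:2.1} the bound $M$ is scale-invariant and $\Omega$ need not be bounded.

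The genuine gap is that essentially all of the analytic work is deferred. Your Step 3 asserts a Carleman estimate with $\tau$-power bookkeeping and a radial log-convex weight, but does not show it; the composition of two second-order Carleman estimates is not automatic (weights and ranges of the large parameter must be made to match, and the commutator/cross terms between $L_2'$ and $L_2''$ produce third-order terms that must be absorbed into the left-hand side), and the degenerate $(L_2)^2$ branch requires a separate and more delicate argument, as you yourself note under ``Main obstacle.'' Likewise, the quantitative separation of the roots under \eqref{3.D(x)bound} (so that the factorization $L=L_2'L_2''+{}$lower order has $C^{1,1}$ coefficients with norms controlled only by $\gamma$, $M$, $\delta_1$) is stated but not established. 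These are precisely the contributions of \cite{M-R-V4} and \cite{M-R-V5}; acknowledging them as ``to be done'' means your sketch records the architecture of the proof without supplying its load-bearing parts. As a roadmap it is correct; as a proof it is incomplete at the point where the difficulty lies.
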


\medskip
\noindent

Let us define, for $\rho\leq\rho_0$,

\begin{equation}
  \label{eq:intornoU}
   \mathcal{U}^\rho=\{x\in \Omega\ |\ \hbox{dist}(x,\partial\Omega)\leq\rho\},
\end{equation}
that is $\mathcal{U}^\rho=\Omega\setminus \Omega_\rho$.

\medskip
\noindent

The following three lemmas state regularity estimates.

\begin{lem}[Global $H^3$ regularity for the mixed problem]
  \label{lem:H^3}
Let $\Omega$ be a bounded domain in $\R^2$ satisfying
\eqref{eq:bound_area} and \eqref{eq:reg_Omega}. Let $D$ be a simply connected open
subset of $\Omega$ satisfying \eqref{eq:compactness} and \eqref{eq:reg_D}.
Let $\widehat{M}\in H^{\frac{1}{2}}(\partial\Omega,\R^2)$ satisfy
\eqref{eq:M_comp}. Let the plate tensor $\mathbb{P}$ be defined by
\eqref{eq:P_def} and satisfying
\eqref{eq:sym-conditions-C-components}, \eqref{eq:3.bound_quantit}, \eqref{eq:3.convex}. Let
$w\in H^2(\Omega \setminus \overline{D})$ be the solution to
\eqref{eq:dir-pbm-incl-rig-1}--\eqref{eq:equil-rigid-incl}. We
have
\begin{equation}
  \label{eq:reg_H3}
   \|w\|_{H^3(\Omega\setminus \overline{D})}\leq
   C\rho_0^2\|\widehat{M}\|_{H^{\frac{1}{2}}(\partial\Omega,\R^2)},
\end{equation}
where $C>0$ only depends on $M_0$, $M_1$, $\gamma$, $M$.
\end{lem}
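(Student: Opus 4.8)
The plan is to establish the global $H^3$ estimate for the mixed problem \eqref{eq:dir-pbm-incl-rig-1}--\eqref{eq:equil-rigid-incl} by decomposing the domain $\Omega\setminus\overline{D}$ into three overlapping regions — an interior region away from both boundaries, a collar neighborhood of $\partial\Omega$ (where Neumann-type conditions \eqref{eq:dir-pbm-incl-rig-2}--\eqref{eq:dir-pbm-incl-rig-3} hold), and a collar neighborhood of $\partial D$ (where homogeneous Dirichlet conditions \eqref{eq:dir-pbm-incl-rig-4}--\eqref{eq:dir-pbm-incl-rig-5} hold) — and applying the appropriate local elliptic regularity estimate on each, then patching together with a partition of unity. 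The starting point is the $H^2$ bound \eqref{eq:w_stima_H2}, which controls the right-hand side of each local estimate. Since $\widehat{M}\in H^{1/2}(\partial\Omega,\R^2)$ here (one derivative more regular than in \eqref{eq:w_stima_H2}), the natural gain is one degree of Sobolev regularity up to the boundary, landing in $H^3$.

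First I would handle the interior estimate: on any subdomain compactly contained in $\Omega\setminus\overline{D}$, the fourth-order equation \eqref{eq:plate_eq} with $C^{1,1}$ coefficients gives, by standard interior $L^2$ regularity for elliptic systems, $\|w\|_{H^3(\text{interior})}\leq C\|w\|_{H^2(\Omega\setminus\overline{D})}$, which by \eqref{eq:w_stima_H2} is bounded by the required quantity (and $\|\widehat{M}\|_{H^{-1/2}}\leq C\|\widehat{M}\|_{H^{1/2}}$). Next, near $\partial D$: since $\partial D$ is of class $C^{3,1}$ by \eqref{eq:reg_D} and the boundary conditions there are homogeneous Dirichlet of the first two orders ($w=0$, $\partial_n w=0$), one flattens the boundary via the $C^{3,1}$ local parametrization, and the transformed equation still has $C^{1,1}$ coefficients; boundary regularity for the Dirichlet problem for a fourth-order elliptic operator then yields $H^3$ control near $\partial D$ in terms of $\|w\|_{H^2}$ plus the (vanishing) boundary data. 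Near $\partial\Omega$: here $\partial\Omega$ is of class $C^{2,1}$ by \eqref{eq:reg_Omega}, the boundary conditions \eqref{eq:dir-pbm-incl-rig-2}--\eqref{eq:dir-pbm-incl-rig-3} are of Neumann type involving second and third tangential/normal derivatives, and the data $\widehat M_n$, $(\widehat M_\tau)_{,s}$ are controlled by $\|\widehat M\|_{H^{1/2}}$; local boundary regularity for this complementing (Agmon--Douglis--Nirenberg) boundary system gives the $H^3$ bound near $\partial\Omega$ in terms of $\|w\|_{H^2}+\|\widehat M\|_{H^{1/2}}$. One must also take care of the (finitely many) points where the three regions meet, but these are covered by the overlapping collars, and the equilibrium conditions \eqref{eq:equil-rigid-incl} are already built into the weak formulation guaranteeing existence of $w\in H^2$, so they need not be revisited.

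The main obstacle I anticipate is the $\partial\Omega$ collar estimate: the boundary operator in \eqref{eq:dir-pbm-incl-rig-2}--\eqref{eq:dir-pbm-incl-rig-3} is the natural (conormal) boundary operator for the plate equation and one must verify the Shapiro--Lopatinskij (complementing) condition so that the standard a priori estimates for elliptic boundary value problems of ADN type apply, and one must track carefully that $C^{2,1}$ regularity of $\partial\Omega$ together with $C^{1,1}$ coefficients is exactly enough to reach $H^3$ (not more) in this collar — the matching of regularity indices is tight and is presumably why the hypothesis demands only $C^{2,1}$ on $\partial\Omega$ but $C^{3,1}$ on $\partial D$. I would carry out the explicit localization and flattening for $\partial\Omega$ in detail (likely deferring some of the routine computation, as the paper says, to Section $5$ or the Appendix), invoking the known regularity theory for the plate operator under the strong convexity \eqref{eq:3.convex} and symmetry \eqref{eq:sym-conditions-C-components} hypotheses, and then assemble the three contributions via a partition of unity subordinate to the covering, absorbing lower-order terms using \eqref{eq:w_stima_H2} to conclude \eqref{eq:reg_H3} with the constant $C$ depending only on $M_0$, $M_1$, $\gamma$, $M$.
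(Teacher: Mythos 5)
Your proposal matches the paper's proof in all essentials: the same three-fold decomposition into an interior region, a collar of $\partial\Omega$ carrying the Neumann-type data, and a collar of $\partial D$ carrying homogeneous Dirichlet conditions, with a boundary-flattening change of variables and a local elliptic regularity estimate on each piece (interior regularity, Theorem~\ref{theo:BoundNeuNonhom}, Theorem~\ref{theo:BoundDirHom} respectively), all patched together and combined with the $H^2$ bound \eqref{eq:w_stima_H2}. One small miscalibration in your reading of the hypotheses: you conjecture that the asymmetry $C^{3,1}$ on $\partial D$ versus $C^{2,1}$ on $\partial\Omega$ is forced by tight index matching for this estimate, but Remark~\ref{rem_minimal} states that $C^{2,1}$ on $\partial D$ already suffices for Lemma~\ref{lem:H^3}; the extra regularity on $\partial D$ is inherited from the global a priori assumptions and is genuinely needed only for the $H^4$ estimate of Lemma~\ref{lem:H^4}.
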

\begin{lem}[$H^4$ regularity up to the boundary of the rigid inclusion]
  \label{lem:H^4}
Let $\Omega$ be a bounded domain in $\R^2$ satisfying
\eqref{eq:bound_area} and \eqref{eq:reg_Omega}. Let $D$ be a
simply connected open subset of $\Omega$ satisfying
\eqref{eq:compactness} and \eqref{eq:reg_D} Let $\widehat{M}\in
H^{-\frac{1}{2}}(\partial\Omega,\R^2)$ satisfy \eqref{eq:M_comp}.
Let the plate tensor $\mathbb{P}$ be defined by \eqref{eq:P_def}
and satisfying \eqref{eq:sym-conditions-C-components},
\eqref{eq:3.bound_quantit}, \eqref{eq:3.convex}. Let $w\in
H^2(\Omega \setminus \overline{D})$ be the solution to
\eqref{eq:dir-pbm-incl-rig-1}--\eqref{eq:equil-rigid-incl}.

We have
\begin{equation}
  \label{eq:reg_H4}
   \|w\|_{H^4\left((\Omega\setminus \overline{D})\setminus \mathcal{U}^{\frac{\rho_0}{8}}\right)}
   \leq
   C\rho_0^2\|\widehat{M}\|_{H^{-\frac{1}{2}}(\partial\Omega,\R^2)},
\end{equation}
where $C>0$ only depends on $M_0$, $M_1$, $\gamma$, $M$.
\end{lem}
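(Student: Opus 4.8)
The plan is to localize the problem near $\partial D$ and reduce the fourth-order mixed boundary value problem to an interior elliptic regularity statement by flattening the boundary of the inclusion. Since $\operatorname{dist}(D,\partial\Omega)\geq\rho_0$, on the annular region $(\Omega\setminus\overline D)\setminus\mathcal U^{\rho_0/8}$ the only boundary present is $\partial D$, where $w$ satisfies the homogeneous Cauchy conditions $w=0$ and $\partial_n w=0$ from \eqref{eq:dir-pbm-incl-rig-4}--\eqref{eq:dir-pbm-incl-rig-5}. First I would invoke Lemma \ref{lem:H^3} to get $\|w\|_{H^3(\Omega\setminus\overline D)}\leq C\rho_0^2\|\widehat M\|_{H^{1/2}(\partial\Omega,\R^2)}$, and then absorb the passage from the $H^{1/2}$ norm to the $H^{-1/2}$ norm of $\widehat M$ by exploiting that $\operatorname{supp}\widehat M$ is compactly contained in the smooth portion $\Sigma$ (hypothesis \eqref{eq:supp_M} together with the frequency bound \eqref{eq:M_frequency}), so interior/boundary smoothing of $w$ away from $\operatorname{supp}\widehat M$ lets one trade derivatives on the data for the $H^{-1/2}$ norm; this is the same device used to pass between \eqref{eq:w_stima_H2} and higher-order estimates.

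The core step is the local $H^4$ bound near $\partial D$. Fix $P\in\partial D$ and use the $C^{3,1}$ parametrization from Definition \ref{def:2.1} with constants $\rho_0,M_0$ to flatten $\partial D$ by a $C^{3,1}$ change of variables; under this map the operator $\operatorname{div}\operatorname{div}(\mathbb P\nabla^2\cdot)$ transforms into a fourth-order elliptic operator in divergence form whose leading coefficients inherit the strong convexity \eqref{eq:3.convex} and whose coefficients are $C^{1,1}$ with norms controlled by $M$, $M_0$ (here one uses precisely that $\mathbb C\in C^{1,1}$, so after one differentiation of the equation the coefficients are still $C^{0,1}$, which is what is needed for $H^4$, i.e. two derivatives beyond the natural $H^2$ energy class). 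On the flat piece the homogeneous Cauchy data become $w=\partial_{x_2}w=0$, so $w$ extends by zero reflection-type arguments, or more simply one applies the standard difference-quotient method in the tangential direction to gain one derivative at a time: from $w\in H^2$ with homogeneous Cauchy data one gets tangential $H^3$, then using the equation to recover the missing normal derivatives one gets $H^3$, and iterating once more (legitimate because the coefficients are $C^{1,1}$) one reaches $H^4$ on a smaller half-disk. A standard covering of $\partial D$ by finitely many such charts — the number controlled by $M_0$, $M_1$ — and interior elliptic regularity (Lemma \ref{lem:H^3} already gives $H^3$, hence $H^4$ interior regularity follows from $\mathbb C\in C^{1,1}$) on the complementary compact region yields \eqref{eq:reg_H4}, with the constant depending only on $M_0$, $M_1$, $\gamma$, $M$.

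The main obstacle I anticipate is bookkeeping the constants through the change of variables and the iteration so that the final bound depends \emph{only} on the a priori data $M_0,M_1,\gamma,M$ and not, say, on the $H^3$ norm of $w$ itself: one must ensure that each application of elliptic regularity is quantitative and that the lower-order terms generated by flattening (which involve $\psi$ and its derivatives up to order four, bounded by $M_0$ via the $C^{3,1}$ hypothesis on $\partial D$) are genuinely absorbed by the $H^3$ estimate already in hand rather than by an a priori $H^4$ norm. A secondary technical point is handling the overlap between the region near $\partial D$ and the interior region so that the local estimates patch together with a partition of unity whose derivatives are controlled by $\rho_0^{-1}$ and combinatorial constants depending on $M_0, M_1$; this is routine but must be done with the dimensionally normalized norms of Remark \ref{rem:2.1} kept consistent throughout. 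The details of the iteration and the coefficient tracking are standard and will be carried out (or deferred to the Appendix) without surprises.
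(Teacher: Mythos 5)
Your ``core step''---flattening $\partial D$, using tangential difference quotients with the homogeneous Cauchy data on the flat boundary, recovering the normal derivative from the equation, and iterating once (legitimate because $\mathbb C\in C^{1,1}$)---is the right local argument and is exactly what the paper does in Theorems \ref{theo:BoundDirHom} and \ref{theo:BoundDirHom-H4}. But the global setup you put in front of it has a genuine gap.

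You propose to first invoke Lemma \ref{lem:H^3} globally on $\Omega\setminus\overline D$, getting $\|w\|_{H^3}\le C\rho_0^2\|\widehat M\|_{H^{1/2}}$, and then ``absorb the passage'' from the $H^{1/2}$ to the $H^{-1/2}$ norm of $\widehat M$ using \eqref{eq:supp_M} and \eqref{eq:M_frequency}. This does not work: Lemma \ref{lem:H^4} assumes only $\widehat M\in H^{-1/2}(\partial\Omega,\R^2)$ with \eqref{eq:M_comp}---neither \eqref{eq:supp_M} nor \eqref{eq:M_frequency} is part of its hypotheses, so you may not use them. Even if they were available, there is no ``trading derivatives'' device that bounds $\|\widehat M\|_{H^{1/2}}$ by $\|\widehat M\|_{H^{-1/2}}$ for a distribution supported in $\Sigma$; \eqref{eq:M_frequency} relates $L^2$ to $H^{-1/2}$, not $H^{1/2}$ to $H^{-1/2}$, and an $H^{-1/2}$ distribution supported in a smooth portion need not lie in $H^{1/2}$. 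The detour through Lemma \ref{lem:H^3} is simply the wrong global input.

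The fix, which is what the paper actually does, is to observe that the set $(\Omega\setminus\overline D)\setminus\mathcal U^{\rho_0/8}$ is at distance $\ge\rho_0/8$ from $\partial\Omega$, so the Neumann data never appears in the local estimates on this region. One only needs: (i) the global $H^2$ bound \eqref{eq:w_stima_H2}, whose right-hand side \emph{already} carries $\|\widehat M\|_{H^{-1/2}}$; (ii) local $H^3$ and then $H^4$ regularity near $\partial D$ (Theorems \ref{theo:BoundDirHom} and \ref{theo:BoundDirHom-H4}), where both estimates have the form $\|v\|_{H^4(B_{1/2}^+)}\le C\|v\|_{H^2(B_1^+)}$ with no $\widehat M$ on the right because the chart touches only $\partial D$ (homogeneous Dirichlet); and (iii) interior $H^4$ regularity, again with $\|w\|_{H^2}$ on the right. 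Chaining (ii)--(iii) with (i) and summing over a finite cover of charts whose number is controlled by $M_0$, $M_1$ yields \eqref{eq:reg_H4} directly with the $H^{-1/2}$ norm and the stated constant dependence, and there is no norm mismatch to ``absorb.''
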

\begin{lem}[$C^{2,\alpha}$ regularity up to the boundary of the rigid inclusion]
  \label{lem:C^1,1}
Let $\Omega$ be a bounded domain in $\R^2$ satisfying
\eqref{eq:bound_area} and \eqref{eq:reg_Omega}. Let $D$ be a
simply connected open subset of $\Omega$ satisfying
\eqref{eq:compactness} and \eqref{eq:reg_D}. Let $\widehat{M}\in
H^{-\frac{1}{2}}(\partial\Omega,\R^2)$ satisfy \eqref{eq:M_comp}
and let the plate tensor $\mathbb{P}$ given by \eqref{eq:P_def}
satisfy \eqref{eq:sym-conditions-C-components},
\eqref{eq:3.bound_quantit} and \eqref{eq:3.convex}. Let $w\in
H^2(\Omega \setminus \overline{D})$ be the solution to
\eqref{eq:dir-pbm-incl-rig-1}--\eqref{eq:equil-rigid-incl}. We
have
\begin{equation}
  \label{eq:reg_classica}
   \|w\|_{C^{2,\alpha}\left((\Omega\setminus \overline{D})\setminus \mathcal{U}^{\frac{\rho_0}{8}}\right)}
   \leq C\rho_0^2\|\widehat{M}\|_{H^{-\frac{1}{2}}(\partial\Omega,\R^2)}, \quad\hbox{for every }\alpha<1,
\end{equation}
where $C>0$ only depends on $M_0$, $M_1$, $\gamma$, $M$, $\alpha$.
\end{lem}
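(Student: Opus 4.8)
The plan is to obtain \eqref{eq:reg_classica} directly from the $H^4$ bound of Lemma~\ref{lem:H^4} by invoking the Sobolev immersion theorem, which in two space dimensions yields the borderline embedding $H^4\hookrightarrow C^{2,\alpha}$ for every $\alpha<1$ (indeed $W^{4,2}\hookrightarrow C^{2,\alpha}$ precisely for $0<\alpha<1$ when $n=2$). Set $\Omega'=(\Omega\setminus\overline{D})\setminus\mathcal{U}^{\frac{\rho_0}{8}}$, the set on which both the hypothesis and the conclusion live, and note that the hypotheses of Lemma~\ref{lem:H^4} coincide with those assumed here.

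First I would record the geometry of $\Omega'$. By \eqref{eq:compactness} we have $\overline{D}\subset\Omega_{\rho_0}\subset\Omega_{\rho_0/8}$, hence $\hbox{dist}\bigl(\overline{D},\mathcal{U}^{\rho_0/8}\bigr)\geq\frac{7}{8}\rho_0$, so $\partial\Omega'$ splits into the two well-separated pieces $\partial D$, of class $C^{3,1}$ with constants $\rho_0,M_0$ by \eqref{eq:reg_D}, and the inner boundary $\partial\Omega_{\rho_0/8}$ of the collar, which is of Lipschitz class with constants depending only on $M_0$, this being a standard consequence of the $C^{2,1}$ regularity \eqref{eq:reg_Omega} of $\partial\Omega$ and of $\frac{\rho_0}{8}<\rho_0$. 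Together with the area bound \eqref{eq:bound_area}, this shows that $\Omega'$ (possibly disconnected, in which case one argues componentwise) is a bounded Lipschitz domain whose Lipschitz character and diameter are controlled by $M_0$ and $M_1$ only. For such a domain the Sobolev immersion theorem gives, with the normalized norms of Remark~\ref{rem:2.1},
\begin{equation*}
\|w\|_{C^{2,\alpha}(\Omega')}\leq C\,\|w\|_{H^4(\Omega')},\qquad 0<\alpha<1,
\end{equation*}
with $C>0$ depending only on $M_0$, $M_1$ and $\alpha$; combining this with the estimate \eqref{eq:reg_H4} of Lemma~\ref{lem:H^4} yields
\begin{equation*}
\|w\|_{C^{2,\alpha}(\Omega')}\leq C\rho_0^2\|\widehat{M}\|_{H^{-\frac{1}{2}}(\partial\Omega,\R^2)},
\end{equation*}
which is \eqref{eq:reg_classica}.

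The only point requiring genuine care is the verification that the Lipschitz character of $\Omega'$ — and hence the constant in the Sobolev immersion — is controlled by the a priori data; this is exactly where the uniform separation of $\partial D$ from the collar $\mathcal{U}^{\rho_0/8}$ and the quantitative regularity of $\partial\Omega$ and $\partial D$ are used. If one prefers to bypass global embeddings on $\Omega'$ altogether, one may instead cover $\Omega'$ by finitely many balls of radius comparable to $\rho_0$ — interior balls well inside $\Omega\setminus\overline{D}$, and boundary balls near $\partial D$ flattened through the $C^{3,1}$ charts provided by \eqref{eq:reg_D} — with the number of balls controlled by $M_1$, apply on each the corresponding interior or boundary Sobolev inequality, and sum; the outcome is the same.
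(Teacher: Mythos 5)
Your proof is correct and follows exactly the paper's argument: the paper's own (one-line) proof reads ``The above Lemma follows by Lemma \ref{lem:H^4} and by Sobolev embedding theorem.'' The extra geometric verification you supply — that $(\Omega\setminus\overline{D})\setminus\mathcal{U}^{\rho_0/8}$ has Lipschitz character controlled by the a priori data, so the embedding constant is uniform — is left tacit in the paper but is the right thing to check.
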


\begin{proof} [Proof of Lemma ~\ref{lem:C^1,1}]
The above Lemma follows by Lemma \ref{lem:H^4} and by Sobolev
embedding theorem, \cite{l:ad}.
\end{proof}

\begin{rem}
   \label{rem_minimal}
We have stated the above regularity lemmas under the regularity assumptions
made for our main Theorem \ref{theo:Main}, but we point out that,
as can be seen {}from the proof given in Section \ref{sec: regularity}, for the
validity of Lemma \ref{lem:H^3} it suffices to assume $C^{2,1}$ regularity of $\partial D$ and $C^{0,1}$ regularity of
the coefficients of the elasticity tensor $\mathbb{C}$.
\end{rem}

\medskip
\noindent

We also need the following trace-type inequality.

\begin{lem} [Trace-type inequality]
\label{lem:trace} Let $\Omega$ be a bounded domain in $\R^{2}$,
satisfying \eqref{eq:bound_area} and \eqref{eq:reg_Omega}, and let
$D$ be a simply connected open subset of $\Omega$ satisfying
\eqref{eq:compactness} and \eqref{eq:reg_D}. Let the fourth order
tensor $\mathbb{P}$ be defined by \eqref{eq:P_def} and satisfying
\eqref{eq:sym-conditions-C-components}, \eqref{eq:3.bound_quantit}
and \eqref{eq:3.convex}. Let $\widehat M \in
H^{-\frac{1}{2}}(\partial \Omega,\R^2)$ satisfy
\eqref{eq:supp_M}--\eqref{eq:M_comp}, with $\Sigma$ satisfying
\eqref{eq:small_enough}. Let $w \in H^{2}(\Omega\setminus
\overline{D})$ be the unique weak solution of the problem
\eqref{eq:dir-pbm-incl-rig-1}--\eqref{eq:equil-rigid-incl}.  We
have
\begin{equation}
    \label{M_hat_above}
\|\widehat M\|_{H^{-\frac{1}{2}}(\partial \Omega,\R^2)}\leq
C\|\nabla ^2w\|_{L^2(\Omega\setminus \overline{D})},
\end{equation}
where $C>0$ only depends on $M_0$, $M_1$,
$\delta_0$ and $M$.
\end{lem}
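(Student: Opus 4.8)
The plan is to combine the weak formulation of problem \eqref{eq:dir-pbm-incl-rig-1}--\eqref{eq:equil-rigid-incl} with the duality characterization of the $H^{-\frac{1}{2}}$ norm, constructing for each (almost optimal) boundary test datum an admissible test function for the mixed problem whose $H^2$ norm is controlled by that datum.

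The starting point is that $w$ satisfies
\[
\int_{\Omega\setminus\overline{D}}\mathbb{P}\,\nabla^2 w\cdot\nabla^2\varphi=\int_{\partial\Omega}\bigl(\widehat{M}_\tau\,\partial_s\varphi-\widehat{M}_n\,\partial_n\varphi\bigr)
\]
for every $\varphi\in H^2(\Omega\setminus\overline{D})$ with $\varphi=0$ and $\partial_n\varphi=0$ on $\partial D$ (two integrations by parts, using \eqref{eq:dir-pbm-incl-rig-2}--\eqref{eq:dir-pbm-incl-rig-3} on $\partial\Omega$ and the homogeneous conditions on $\partial D$, the boundary integral being the $H^{-\frac{1}{2}}$--$H^{\frac{1}{2}}$ duality on $\partial\Omega$ since $\partial_s\varphi,\partial_n\varphi\in H^{\frac{1}{2}}(\partial\Omega)$ for $\varphi\in H^2$). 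Since \eqref{eq:3.bound_quantit} gives $|\mathbb{P}A|\le CM|A|$ for symmetric $A$ (the thickness $h$ being omitted), Cauchy--Schwarz yields
\[
\Bigl|\int_{\partial\Omega}\bigl(\widehat{M}_\tau\,\partial_s\varphi-\widehat{M}_n\,\partial_n\varphi\bigr)\Bigr|\le CM\,\|\nabla^2 w\|_{L^2(\Omega\setminus\overline{D})}\,\|\varphi\|_{H^2(\Omega\setminus\overline{D})}.
\]

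The heart of the argument is to show that a suitable $\varphi$ makes the left-hand side comparable to $\|\widehat{M}\|_{H^{-\frac{1}{2}}(\partial\Omega,\R^2)}$. Because the pointwise change of orthonormal frame $x\mapsto(n(x),\tau(x))$ on $\partial\Omega$ is Lipschitz by \eqref{eq:reg_Omega}, hence a bounded multiplier on $H^{\pm\frac{1}{2}}(\partial\Omega)$ with norm controlled by $M_0$, one has
\[
\|\widehat{M}\|_{H^{-\frac{1}{2}}(\partial\Omega,\R^2)}\le C(M_0)\sup\Bigl\{\langle\widehat{M}_n,\alpha\rangle+\langle\widehat{M}_\tau,\beta\rangle\ :\ \alpha,\beta\in H^{\frac{1}{2}}(\partial\Omega),\ \|\alpha\|_{H^{\frac{1}{2}}(\partial\Omega)}+\|\beta\|_{H^{\frac{1}{2}}(\partial\Omega)}\le 1\Bigr\}.
\]
Given such $\alpha,\beta$, I would build an admissible $\varphi$ with $\partial_n\varphi=-\alpha$ and $\partial_s\varphi=\beta$ on $\partial\Omega$, so that the boundary integral above equals $\langle\widehat{M}_n,\alpha\rangle+\langle\widehat{M}_\tau,\beta\rangle$, and with $\|\varphi\|_{H^2(\Omega\setminus\overline{D})}\le C(\|\alpha\|_{H^{\frac{1}{2}}(\partial\Omega)}+\|\beta\|_{H^{\frac{1}{2}}(\partial\Omega)})$. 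This is exactly where \eqref{eq:supp_M} and \eqref{eq:small_enough} enter: prescribing $\partial_s\varphi|_{\partial\Omega}=\beta$ forces $\int_{\partial\Omega}\beta=0$, so I first replace $\beta$ by $\widetilde\beta:=\chi\beta-\bigl(\int_{\partial\Omega}\chi\beta\bigr)\mu$, where $\chi\in C^\infty_c(\Sigma)$ equals $1$ near $\mathrm{supp}\,\widehat{M}$ and $\mu$ is a fixed smooth bump with $\int_{\partial\Omega}\mu=1$ supported in an arc of $\partial\Omega\setminus\overline{\Sigma}$, which is nonempty and of length bounded below in terms of $\delta_0,M_0,M_1$ by \eqref{eq:small_enough}, \eqref{eq:reg_Omega}, \eqref{eq:bound_area}. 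Then $\int_{\partial\Omega}\widetilde\beta=0$, $\langle\widehat{M}_\tau,\widetilde\beta\rangle=\langle\widehat{M}_\tau,\beta\rangle$, and $\|\widetilde\beta\|_{H^{\frac{1}{2}}(\partial\Omega)}\le C(M_0,M_1,\delta_0)\|\beta\|_{H^{\frac{1}{2}}(\partial\Omega)}$. Taking $g_0\in H^{\frac{3}{2}}(\partial\Omega)$ to be the arclength primitive of $\widetilde\beta$ and $g_1:=-\alpha$, a bounded $H^2$ trace-lifting on the domain $\Omega\setminus\overline{D}$ — available since $\partial\Omega\cup\partial D$ is of class $C^{2,1}$ with constants $\rho_0,M_0$, $\mathrm{dist}(D,\partial\Omega)\ge\rho_0$ and $|\Omega|\le M_1\rho_0^2$ (extend $(g_0,g_1)$ across $\partial\Omega$ in a collar of width $\tfrac{\rho_0}{4}$ through the local charts and a cutoff, and set $\varphi\equiv 0$ near $\partial D$ and in the interior) — produces $\varphi\in H^2(\Omega\setminus\overline{D})$ with $\varphi|_{\partial\Omega}=g_0$, $\partial_n\varphi|_{\partial\Omega}=g_1$, $\varphi|_{\partial D}=\partial_n\varphi|_{\partial D}=0$ and $\|\varphi\|_{H^2(\Omega\setminus\overline{D})}\le C(M_0,M_1,\delta_0)\bigl(\|\alpha\|_{H^{\frac{1}{2}}(\partial\Omega)}+\|\beta\|_{H^{\frac{1}{2}}(\partial\Omega)}\bigr)$.

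Inserting this $\varphi$ into the two displayed estimates gives
\[
\bigl|\langle\widehat{M}_n,\alpha\rangle+\langle\widehat{M}_\tau,\beta\rangle\bigr|\le C(M_0,M_1,\delta_0)\,M\,\|\nabla^2 w\|_{L^2(\Omega\setminus\overline{D})}\bigl(\|\alpha\|_{H^{\frac{1}{2}}(\partial\Omega)}+\|\beta\|_{H^{\frac{1}{2}}(\partial\Omega)}\bigr),
\]
and passing to the supremum over $\alpha,\beta$ delivers \eqref{M_hat_above}. The main obstacle is the construction just outlined — in particular the mean-zero correction of $\beta$, which is essential rather than cosmetic: without \eqref{eq:supp_M} and \eqref{eq:small_enough} the estimate is false, since a constant twisting couple ($\widehat{M}_n\equiv 0$, $\widehat{M}_\tau\equiv\mathrm{const}\neq 0$) satisfies \eqref{eq:M_comp} and produces $w\equiv 0$ while $\|\widehat{M}\|_{H^{-\frac{1}{2}}(\partial\Omega,\R^2)}\neq 0$ — together with the quantitative $H^2$ extension on $\Omega\setminus\overline{D}$ with constants depending only on the a priori data.
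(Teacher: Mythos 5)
Your argument follows the natural duality route: test the weak formulation against $\varphi\in H^2(\Omega\setminus\overline D)$ with $\varphi=\partial_n\varphi=0$ on $\partial D$, bound the energy term by $CM\,\|\nabla^2 w\|_{L^2}\,\|\varphi\|_{H^2}$, and then show that a bounded $H^2$ lifting of suitably chosen boundary data $(\varphi,\partial_n\varphi)|_{\partial\Omega}$ makes the boundary functional recover the $H^{-1/2}$ norm of $\widehat M$, using \eqref{eq:supp_M}--\eqref{eq:small_enough} to fix up the mean-zero obstruction in prescribing $\partial_s\varphi$. The paper does not give its own proof of Lemma~\ref{lem:trace} but defers to Lemma~7.1 in \cite{M-R-V1} ``adapted to the mixed problem''; your construction is essentially that adaptation (the adaptation being precisely the requirement $\varphi=\partial_n\varphi=0$ on $\partial D$), and your observation that without the support hypothesis the estimate fails for a constant twisting couple is correct and explains sharply why $\delta_0$ must appear.

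There is, however, one genuine flaw in the constants. You introduce a cutoff $\chi\in C^\infty_c(\Sigma)$ with $\chi\equiv1$ near $\operatorname{supp}\widehat M$ and then claim $\|\widetilde\beta\|_{H^{1/2}(\partial\Omega)}\le C(M_0,M_1,\delta_0)\,\|\beta\|_{H^{1/2}(\partial\Omega)}$. But the multiplier norm of $\chi$ on $H^{1/2}(\partial\Omega)$ is controlled by its Lipschitz constant, which is necessarily at least of order $\operatorname{dist}(\operatorname{supp}\widehat M,\,\partial\Omega\setminus\Sigma)^{-1}$; assumption \eqref{eq:supp_M} guarantees only that this distance is positive, with no quantitative lower bound among the a priori data listed in the lemma ($M_0,M_1,\delta_0,M$). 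As written, your constant is therefore not a priori. The fix is to drop $\chi$ entirely: set $\widetilde\beta=\beta-\bigl(\int_{\partial\Omega}\beta\bigr)\mu$ with $\mu$ as you chose it. Then $\int_{\partial\Omega}\widetilde\beta=0$; $\langle\widehat M_\tau,\widetilde\beta\rangle=\langle\widehat M_\tau,\beta\rangle$ still holds because $\operatorname{supp}\mu\subset\partial\Omega\setminus\overline\Sigma$ is disjoint from $\operatorname{supp}\widehat M\subset\subset\Sigma$; and $\|\widetilde\beta\|_{H^{1/2}(\partial\Omega)}\le\|\beta\|_{H^{1/2}(\partial\Omega)}+\bigl|\int_{\partial\Omega}\beta\bigr|\,\|\mu\|_{H^{1/2}(\partial\Omega)}\le C(M_0,M_1,\delta_0)\,\|\beta\|_{H^{1/2}(\partial\Omega)}$, since $\mu$ is a fixed bump depending only on $\partial\Omega$ and $\Sigma$. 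With this modification the rest of your argument goes through and yields the claimed a priori dependence.
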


\begin{rem}
   \label{rem:trace}
The proof of the above lemma can be obtained by adapting to the
mixed problem the proof of the analogous Lemma $7.1$ in
\cite{M-R-V1}, which was established for the solutions to a
Neumann problem. Indeed, the above result holds under weaker
assumptions: it suffices that the coefficients of the elasticity
tensor $\mathbb{C}$ are of class $L^\infty$ and that $\partial D$
is of class $C^{2,1}$.

\end{rem}

\noindent Next lemma ensures regularity of the boundaries of a
family of domains approximating the sets $D^\rho_i=\{x\in \Omega \
|\ \hbox{dist}(x, \partial D_i)<\rho\}$.

\begin{lem}[Regularized domains \cite{A-B-R-V}, Lemma 5.3]
  \label{lem:reg_domain}
Let $\Omega$ be a domain with boundary of class $C^{1,1}$ with
constants $\rho_0$, $M_0$ and satisfying \eqref{eq:bound_area}.
Let $D$ be a simply connected open subset of $\Omega$ with
boundary of class $C^{1,1}$ with constants $\rho_0$, $M_0$ and
satisfying \eqref{eq:compactness}. One can construct a family of
{\it regularized domains} $\tilde D ^h\subset\Omega$, for $0<h\leq
a \rho_0$, having boundary of class $C^1$ with constants
$\tilde\rho_0$ and $\tilde M_0$, such that
\begin{equation}
   \label{eq:reg_1}
   D \subset \tilde D^{h_1}\subset\tilde D^{h_2}, \qquad 0<h_1\leq h_2,
\end{equation}
\begin{equation}
   \label{eq:reg_2}
   \gamma_0 h\leq {\rm dist}(x,\partial D)\leq \gamma_1 h,\qquad
   {\rm for\ every\ } x\in\partial\tilde D^h,
\end{equation}
\begin{equation}
   \label{eq:reg_3}
   |\tilde D^h \setminus D|\leq \gamma_2 M_1 \rho_0h,
\end{equation}
\begin{equation}
   \label{eq:reg_4}
   |\partial\tilde  D^h|_{1}\leq \gamma_3 M_1
   \rho_0,
\end{equation}
for every $x\in\partial\tilde D^h$ there exists $y\in\partial D$
such that
\begin{equation}
   \label{eq:reg_5}
   |y-x|={\rm dist}(x,\partial D),\quad |\nu(x)-\nu(y)|\leq
   \gamma_4 \frac {h}{\rho_0},
\end{equation}
where $\nu(x)$, $\nu(y)$ denote the outer unit normals to $\tilde
D^h$ at $x$ and to $D$ at $y$ respectively, and $a$,
$\gamma_j$, $j=0, 1,...,4$, and the ratios
$\frac{\tilde\rho_0}{\rho_0}$ and $\frac{\tilde M_0}{M_0}$ only
depend on $M_0$. Here $|\cdot|_1$ denotes the
$1$- dimensional measure.
\end{lem}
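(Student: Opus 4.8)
The plan is to realize $\tilde D^h$ as the outer parallel set of the inclusion, namely $\tilde D^h:=\{x\in\R^2\ :\ \mathrm{dist}(x,\overline D)<h\}$ for $0<h\le a\rho_0$, and to read all the stated properties off the distance function $d(x)=\mathrm{dist}(x,\overline D)$, which is well behaved in a tubular neighbourhood of $\partial D$. (If $C^\infty$ boundaries were wanted one would instead mollify $d$ at scale $\sim h$ and take a suitable level set; this turns the exact identities used below into the $O(h/\rho_0)$ estimates appearing in \eqref{eq:reg_2} and \eqref{eq:reg_5}, but it is not needed here, since $C^1$ regularity suffices.) The inclusion $\tilde D^h\subset\Omega$ is immediate from \eqref{eq:compactness} as soon as $a<1$, the monotonicity \eqref{eq:reg_1} is that of parallel sets, and $\tilde D^h$ is simply connected since it deformation retracts onto $\overline D$ along the gradient flow of $d$ for $h$ small.

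The crucial point is the regularity of $\partial\tilde D^h$ with constants controlled by $M_0$ alone. First I would unpack Definition \ref{def:2.1} for $\partial D$: in each local chart the graphing function $\psi$ obeys $\|\psi''\|_{L^\infty}\le M_0/\rho_0$, so $\partial D$ has one-sided osculating circles of radius $\ge\rho_0/M_0$, and, since $\partial D\cap R_{\rho_0/M_0,\rho_0}(P)$ is a single Lipschitz graph for every $P\in\partial D$, geometrically distinct sheets of $\partial D$ stay at mutual distance $\ge c\rho_0/M_0$. Combining these two facts in the usual way yields a lower bound $\mathrm{reach}(\overline D)\ge c_0\rho_0$ with $c_0=c_0(M_0)>0$, and one fixes $a=a(M_0)<c_0$. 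For $h<c_0\rho_0$ the function $d$ is of class $C^{1,1}$ on the collar $\{0<d<c_0\rho_0\}$ with $|\nabla d|\equiv 1$, hence $\partial\tilde D^h=\{d=h\}$ is a $C^{1,1}$ (in particular $C^1$) curve whose curvature stays $\le CM_0/\rho_0$; writing it locally as a graph over the tangent line at the nearest point $y\in\partial D$, and using that the unit tangent of $\partial D$ varies at a rate $\le M_0/\rho_0$ in arclength, one checks that $\partial\tilde D^h$ is of class $C^1$ with constants $\tilde\rho_0,\tilde M_0$ whose ratios $\tilde\rho_0/\rho_0$ and $\tilde M_0/M_0$ depend only on $M_0$. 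This is where the genuine work lies; the rest is bookkeeping.

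It then remains to verify \eqref{eq:reg_2}--\eqref{eq:reg_5}. For \eqref{eq:reg_2}: if $x\in\partial\tilde D^h$ then $\mathrm{dist}(x,\partial D)=\mathrm{dist}(x,\overline D)=h$ exactly, so one may take $\gamma_0=\gamma_1=1$. For \eqref{eq:reg_5}: since $h<\mathrm{reach}(\overline D)$, the nearest point $y\in\partial D$ of $x$ is unique, realizes $\mathrm{dist}(x,\partial D)$, and satisfies $\nu_{\tilde D^h}(x)=\nabla d(x)=(x-y)/|x-y|=\nu_D(y)$, so \eqref{eq:reg_5} holds with $\gamma_4=0$. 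For \eqref{eq:reg_3}--\eqref{eq:reg_4} I would first bound the length of $\partial D$: the outward normal segments of length $\rho_0/(4M_0)$ emanating from $\partial D$ are pairwise disjoint (by the reach bound), lie inside $\Omega$ by \eqref{eq:compactness}, and sweep a set of area $\ge c(M_0)\rho_0|\partial D|_1$, so \eqref{eq:bound_area} gives $|\partial D|_1\le C(M_0)M_1\rho_0$; then $\tilde D^h\setminus D$ lies in the $h$-collar of $\partial D$, of area $\le C(M_0)h|\partial D|_1\le\gamma_2M_1\rho_0 h$, which is \eqref{eq:reg_3}, while $|\partial\tilde D^h|_1\le(1+C(M_0)h/\rho_0)|\partial D|_1\le\gamma_3M_1\rho_0$, which is \eqref{eq:reg_4}. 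With the mollified construction, \eqref{eq:reg_2} and \eqref{eq:reg_5} acquire their stated $h$-dependence from the $O(h)$ shift and $O(h/\rho_0)$ tilt introduced by the mollifier, while \eqref{eq:reg_1}, \eqref{eq:reg_3}, \eqref{eq:reg_4} go through verbatim up to fixed constants. The main obstacle throughout is the uniform-in-$h$ control of the $C^1$ character of $\partial\tilde D^h$, i.e. the reach estimate for $\overline D$ extracted from Definition \ref{def:2.1}.
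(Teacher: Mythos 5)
The paper does not prove this lemma; it cites it as Lemma~5.3 of \cite{A-B-R-V}, so there is no internal proof to compare against. The relevant comparison is therefore with the construction in \cite{A-B-R-V}, which (as the weaker form of the conclusions \eqref{eq:reg_2} and \eqref{eq:reg_5}, with $\gamma_0<\gamma_1$ and $\gamma_4>0$, already betrays) is \emph{not} the bare parallel set: it proceeds through a smoothing of the distance function (or of the domain) and a level-set choice, producing smooth boundaries at the price of an $O(h)$ uncertainty in the distance and an $O(h/\rho_0)$ tilt of the normal. Your route — taking $\tilde D^h=\{\mathrm{dist}(\cdot,\overline D)<h\}$ directly — is simpler and, as you observe, actually sharper: it gives $\gamma_0=\gamma_1=1$ and $\gamma_4=0$, and since a level set of the $C^{1,1}$ distance function in the collar is $C^{1,1}$, the $C^1$ regularity with controlled constants asked for in the lemma is achieved without mollifying. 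In exchange, the mollified construction of \cite{A-B-R-V} hands you $C^\infty$ regularized domains for free, which matters in settings where one wants to apply elliptic regularity up to $\partial\tilde D^h$; for the $C^1$ statement quoted here your version suffices.

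The one place where the sketch stays genuinely hand-wavy is the step you yourself flag as ``the genuine work'': extracting a uniform lower bound on $\mathrm{reach}(\overline D)$, and on the curvature of $\{d=h\}$, from Definition~\ref{def:2.1}. The assertion that a curvature bound $|\psi''|\le M_0/\rho_0$ plus the single-graph property inside $R_{\rho_0/M_0,\rho_0}(P)$ implies a mutual separation $\gtrsim\rho_0/M_0$ between distinct sheets is the pivot, and ``combining these two facts in the usual way'' buries a non-trivial argument (the osculating-circle bound controls the cut locus coming from curvature; the chart condition controls the cut locus coming from remote sheets; one must also propagate the curvature bound to the level set $\{d=h\}$ via $\kappa_h\le\kappa/(1-\kappa h)$ and fix $a=a(M_0)$ so this stays bounded). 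In a full writeup these would need to be spelled out, and one should also note explicitly that $|\partial D|_1\gtrsim \rho_0/M_0$ (forced by the curvature bound on a closed curve), which is what makes your inequality $|\partial\tilde D^h|_1\le(1+Ch/\rho_0)|\partial D|_1$ dominate the additive $2\pi h$ term. Modulo those details the argument is sound, and \eqref{eq:reg_1}, \eqref{eq:reg_3}--\eqref{eq:reg_4} follow as you indicate; you also correctly deal with the (easily overlooked) point that $\tilde D^h$ remains simply connected, via the gradient-flow retraction onto $\overline D$.
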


\begin{proof} [Proof of Proposition \ref{prop:LPS}]
The proof of this Proposition is rather technical and follows the lines of the proof of
Proposition 4.2 in \cite{M-R-V5}, which refers to the Neumann problem for the plate equation
under the same general assumptions on the plate tensor and on the couple field. For this reason,
in order to simplify the present exposition, we illustrate the main ideas underlying the proof
of the first part of the result which is essentially the same as in the proof of
Proposition 4.2 in \cite{M-R-V5}.

The main idea is that of applying the three spheres inequality \eqref{eq:3sph} over a chain of disks centered
at points of a path connecting $\bar x$ to any point $y\in(\Omega\setminus \overline{D})_{s\rho}$, thus estimating $\int_{B_\rho(y)}|\nabla^2 w|^2$
in terms of $\int_{B_\rho(\bar x)}|\nabla^2 w|^2$. In doing this, in order to reduce the number of iterations, which is responsible of the deterioration of the estimate, we exploit the regularity of the boundary components $\partial\Omega$ and $\partial D$
of $\Omega\setminus \overline{D}$
(in fact Lipschitz regularity suffices for this step), constructing suitable cones near the boundary, inside which one can apply the three spheres inequality over a chain of disks tangent to the cones.

It is not restrictive to assume that $\rho_0=1$.

By a covering argument, one obtains the following estimate, which is the analogous of $(5.36)$ in \cite{M-R-V5}:
\begin{equation}
   \label{eq:reverse}
\int_{B_{\rho}(\overline{x})}|\nabla^2 w|^2 \geq
\int_{\Omega\setminus \overline{D}}|\nabla^2 w|^2
\left(\frac{C'\rho^2\int_{(\Omega\setminus \overline{D})_{(s+1)\rho}}|\nabla^2
w|^2}{\int_{\Omega\setminus \overline{D}}|\nabla^2
w|^2}\right)^{\delta_{\chi}^{-A_1-B_1\log\frac{1}{\rho}}}, \forall \rho\leq \bar\rho
\end{equation}
where $\delta_\chi$, $0<\delta_\chi<1$, $\bar\rho$, $B_1$ and $C'$ are positive constants only depending on $\gamma$, $M$, $\delta_1$ and $M_0$,
whereas $A_1>0$
only depends on $\gamma$, $M$, $\delta_1$, $M_0$ and $M_1$.

Let us set
\begin{equation}
   \label{eq:riscrittura}
   \frac{\int_{(\Omega\setminus \overline{D})_{(s+1)\rho}}|\nabla^2
w|^2}{\int_{\Omega\setminus \overline{D}}|\nabla^2
w|^2} = 1- \frac{\int_{(\Omega\setminus \overline{D})\setminus(\Omega\setminus \overline{D})_{(s+1)\rho}}|\nabla^2
w|^2}{\int_{\Omega\setminus \overline{D}}|\nabla^2
w|^2}.
\end{equation}

By H\"{o}lder inequality
\begin{equation}
  \label{eq:holder}
   \|\nabla^2 w\|^2_{L^2\left({(\Omega\setminus \overline{D})\setminus(\Omega\setminus \overline{D})_{(s+1)\rho}}\right)}\leq
   \left|{(\Omega\setminus \overline{D})\setminus(\Omega\setminus \overline{D})_{(s+1)\rho}}\right|^{\frac{1}{2}}
   \|\nabla^2 w\|^2_{L^4\left({(\Omega\setminus \overline{D})\setminus(\Omega\setminus \overline{D})_{(s+1)\rho}}\right)},
\end{equation}
and by Sobolev inequality \cite{l:ad}
\begin{equation}
  \label{eq:sobolev}
   \|\nabla^2 w\|^2_{L^4\left(\Omega\setminus \overline{D}\right)}\leq C \|\nabla^2 w\|^2_{H^{\frac{1}{2}}(\Omega\setminus \overline{D})},
\end{equation}
we have
\begin{equation}
  \label{eq:hol-sob}
   \|\nabla^2 w\|^2_{L^2({(\Omega\setminus \overline{D})\setminus(\Omega\setminus \overline{D})_{(s+1)\rho}})}\leq
   C \left|{(\Omega\setminus \overline{D})\setminus(\Omega\setminus \overline{D})_{(s+1)\rho}}\right|^{\frac{1}{2}}
   \| w\|^2_{H^{\frac{5}{2}}(\Omega\setminus \overline{D})},
\end{equation}
where $C$ only depends on $M_0$, $M_1$.

By interpolating the estimates \eqref{eq:w_stima_H2} and \eqref{eq:reg_H3}, we have
\begin{equation}
  \label{eq:5/2_2}
   \| w\|_{H^{\frac{5}{2}}(\Omega\setminus \overline{D})}\leq C\|\widehat{M}\|_{L^2(\partial\Omega,\R^2)},
\end{equation}
where $C$ only depends on $M_0$, $M_1$, $\gamma$ and $M$.

Moreover
\begin{equation}
  \label{eq:AR}
   \left|(\Omega\setminus \overline{D})\setminus(\Omega\setminus \overline{D})_{(s+1)\rho}\right|\leq C\rho,
\end{equation}
with $C$ only depending on $M_0$ and $M_1$, see for details (A.3)
in \cite{Al-Ro}. {}From \eqref{eq:hol-sob}, \eqref{eq:5/2_2} and
\eqref{eq:AR} we have
\begin{equation}
  \label{eq:stima_num}
   \int_{(\Omega\setminus \overline{D})\setminus(\Omega\setminus \overline{D})_{(s+1)\rho}}|\nabla^2 w|^2\leq
   C\rho^{\frac{1}{2}}\|\widehat{M}\|^2_{L^2(\partial\Omega,\R^2)},
\end{equation}
where $C$ only depends on $M_0$, $M_1$, $\gamma$, $M$.

By \eqref{eq:riscrittura}, \eqref{eq:stima_num}
and \eqref{M_hat_above} we obtain that there exists $\widetilde{\rho}>0$, only depending on $M_0$, $M_1$,
$\delta_0$, $\gamma$, $M$ and $F$, such that for every
$\rho\leq\widetilde{\rho}$ we have
\begin{equation}
  \label{eq:Frequency}
   \frac{\int_{(\Omega\setminus \overline{D})_{(s+1)\rho}}|\nabla^2 w|^2}{\int_{\Omega\setminus \overline{D}}|\nabla^2 w|^2}\geq \frac{1}{2}.
\end{equation}
By \eqref{eq:reverse}, \eqref{eq:Frequency} and \eqref{M_hat_above}, we have that for $\rho\leq \min_\{\bar\rho, \widetilde{\rho}\}$,
\begin{equation}
   \label{eq:reverse_last}\int_{B_{\rho}(\overline{x})}|\nabla^2 w|^2
   \geq \left(\widetilde{C}\rho^2\right)^{\delta_{\chi}^{-A_1-B_1\log\frac{1}{\rho}}} \|\widehat{M}\|_{H^{-\frac{1}{2}}(\partial \Omega,\R^2)}^2,
\end{equation}
where $\tilde C>0$ only depends on $\gamma$, $M$, $\delta_1$, $M_0$,
$M_1$ and $\delta_0$. Let us take $\rho\leq\widetilde{C}$. Noticing
that $|\log \rho|\leq \frac{1}{\rho}$, for every $\rho>0$, and that
$\tilde\rho<1$, by straightforward computations we obtain that
\eqref{eq:LPS} holds with $A=3\exp(A_1|\log\delta_\chi|)$,
$B=|\log\delta_\chi|B_1+1$ for every $\rho\leq\rho^*$ with
$\rho^*=\min\{\bar\rho, \frac{\tilde\rho}{s+1},\tilde C\}$, $\rho^*$ only
depending on $\gamma$, $M$, $\delta_1$, $M_0$, $M_1$, $\delta_0$, and
$F$.

If $\rho>\rho^*$ and $\bar
x\in(\Omega\setminus \overline{D})_{s\rho}\subset(\Omega\setminus \overline{D})_{s\rho^*}$, then, by applying \eqref{eq:LPS}
with $\rho=\rho^*$,  we have
\begin{equation}
   \label{eq:rho>}
   \int_{B_{\rho}(\bar x)}|\nabla^2 w|^2
   \geq \int_{B_{\rho^*}(\bar x)}|\nabla^2 w|^2
   \geq C^*\|\widehat{M}\|_{H^{-\frac{1}{2}}(\partial \Omega,\R^2)}^2,
\end{equation}
where $C^*$ only depends on  $\gamma$, $M$, $\delta_1$, $M_0$, $M_1$, $\delta_0$ and
$F$.

Since $\bar x\in (\Omega\setminus \overline{D})_{s\rho}$, we have that
\begin{equation}
\label{diam_lower}
 \hbox{diam}(\Omega\setminus \overline{D})\geq 2s\rho,
\end{equation}
and, on the other hand,
\begin{equation}
\label{diam_upper}
 \hbox{diam}(\Omega\setminus \overline{D})\leq C_2,
\end{equation}
with $C_2$ only depending on $M_0$ and $M_1$, so that
\begin{equation}
   \label{eq:upper_for_rho>}
   \frac{2s}{C_2}\leq\frac{1}{\rho}.
\end{equation}
By \eqref{eq:rho>} and \eqref{eq:upper_for_rho>}, we have
\begin{equation}
   \label{eq:rho>finale}
   \int_{B_{\rho}(\bar x)}|\nabla^2 w|^2
   \geq \frac{C}{\exp\left[A\left(\frac{1}{\rho}\right)^B\right]}
   \|\widehat{M}\|_{H^{-\frac{1}{2}}(\partial \Omega,\R^2)}^2,
\end{equation}
with $C=C^*\exp\left[A\left(\frac{2s}{C_2}\right)^B\right]$.

\end{proof}

\begin{proof} [Proof of Proposition \ref{prop:Cauchy1}]

Let $G$ be the connected component of $\Omega\setminus \left(\overline{D_1\cup D_2}\right)$ such that
$\Sigma\subset \partial G$. Let us prove \eqref{eq:Cauchy1}--\eqref{eq:omega_cauchy_loglog}, the proof of
\eqref{eq:Cauchy1bis}--\eqref{eq:omega_cauchy_loglog} being analogous.

Let $\overline{g}\in \mathcal{A}$ the affine function realizing the minimum in \eqref{eq:small_L2} and let us set

\begin{equation}
    \label{eq:def-w}
    w = w_1 - w_2 - \overline{g}.
\end{equation}

\noindent
\textbf{Step 1.} Let $\overline{g}\equiv 0$.

In this case $w = w_1 - w_2$.

It is not restrictive to assume $\epsilon<\rho_0^2\|\widehat{M}\|_{H^{-\frac{1}{2}}(\partial \Omega,\R^2)}\widetilde{\mu}$,
where $\widetilde{\mu}$, $0<\widetilde{\mu}<e^{-1}$ is a constant only depending on $\gamma$, $M$, $\delta_1$,
$M_0$, $M_1$, which will be chosen later on.

Let $\vartheta=\min\left\{a, \frac{3}{4\gamma_1}, \frac{h_0}{4\gamma_0\sqrt{1+M_0^2}}\right\}$, where
$h_0$, $0<h_0<1$, only depending on $M_0$, is
such that $\Omega_h$ is connected for every $h\leq h_0$ (see Prop. 5.5
in \cite{A-R-R-V}), and where $a$, $\gamma_0$, $\gamma_1$ have been introduced in Lemma \ref{lem:reg_domain}.
Let $\overline{\rho}=\vartheta\rho_0$ and let $\rho\leq\overline{\rho}$.

Let us denote by $\widetilde{V}_\rho$ the connected component of $\overline{\Omega}\setminus (\widetilde{D}_1^\rho\cup \widetilde{D}_2^\rho)$ which
contains $\partial\Omega$, where $\widetilde{D}_i^\rho$ are the regularized domains introduced in Lemma \ref{lem:reg_domain}. We have

\begin{equation}
   \label{eq:allargamento}
D_2 \setminus \overline{D_1} \subset \left ( (\tilde D_1^\rho \setminus
\overline{D_1} ) \setminus \overline{G} \right ) \cup \left (
(\Omega \setminus \tilde V_\rho ) \setminus \overline{\tilde
D_1^\rho} \right ),
\end{equation}
\begin{equation}
\label{eq:bordo_allargamento}
\partial \left (
(\Omega \setminus \tilde V_\rho ) \setminus \overline{\tilde
D_1^\rho} \right ) = \tilde\Gamma_{1}^\rho
\cup\tilde\Gamma_{2}^\rho,
\end{equation}
where $\tilde\Gamma_{2}^\rho=\partial\tilde D_{2}^\rho\cap\partial
\tilde V_\rho$ and $\tilde\Gamma_{1}^\rho\subset\partial\tilde
D_{1}^\rho$.

We have
\begin{equation}
  \label{eq:stab1}
  \int_{D_2 \setminus \overline{D_1}} |\nabla^2 w_1|^2\leq
  \int_{(\tilde D_1^\rho \setminus
\overline{D_1} ) \setminus \overline{G}}|\nabla^2 w_1|^2+ \int_{(\Omega \setminus \tilde V_\rho ) \setminus \overline{\tilde
D_1^\rho}}|\nabla^2 w_1|^2.
\end{equation}
By \eqref{eq:reg_classica} and \eqref{eq:reg_3} we have
\begin{equation}
   \label{eq:stab2}
   \int_{(\tilde D_1^\rho \setminus
\overline{D_1} ) \setminus \overline{G}}|\nabla^2 w_1|^2
   \leq
   C\rho_0^2 \|\widehat{M}\|_{H^{-\frac{1}{2}}(\partial \Omega, \R^2)}^2 \frac{\rho}{\rho_0} ,
\end{equation}
with $C>0$ only depending on $\gamma$, $M$, $M_0$ and $M_1$.

By applying the divergence theorem and by \eqref{eq:3.convex} we have
\begin{multline}
   \label{eq:byparts}
   \int_{(\Omega \setminus \tilde V_\rho ) \setminus \overline{
\tilde D_1^\rho}}|\nabla^2 w_1|^2\leq\gamma^{-1} \int_{\tilde\Gamma_{1}^\rho}B_2(w_1)w_{1,\nu}+\widetilde{B}_2(w_1)w_{1,s}+B_3(w_1)w_1+\\
+\gamma^{-1}\int_{\tilde\Gamma_{2}^\rho}B_2(w_1)w_{1,\nu}+\widetilde{B}_2(w_1)w_{1,s}+B_3(w_1)w_1,
\end{multline}

where
\begin{equation}
   \label{eq:B_2}
   B_2(w_1)=(\mathbb{P} \nabla^2w_1)\nu\cdot\nu,
\end{equation}
\begin{equation}
   \label{eq:B'_2}
   \widetilde{B}_2(w_1)=(\mathbb{P} \nabla^2w_1)\nu\cdot\tau,
\end{equation}
\begin{equation}
   \label{eq:B_3}
   B_3(w_1)=-\divrg(\mathbb{P} \nabla^2w_1)\cdot\nu,
\end{equation}
with $\nu$ denoting the  unit outer normal to $(\Omega \setminus \tilde V_\rho ) \setminus \overline{
\tilde D_1^\rho}$, and $s$ an arclength with associated parametrization
$\varphi(s)$,  such that
$\varphi'(s)=\tau(\varphi(s))$, where $\tau=e_3 \times \nu$.

Let $x\in\tilde\Gamma_{1}^\rho$. By \eqref{eq:reg_2},
dist$(x,\partial D_1)\leq\gamma_1 \rho$. Let $y\in
\partial D_1$ such that $|y-x|=
\hbox{dist}(x,\partial D_1)\leq\gamma_1 \rho$. Since $\gamma_1 \rho\leq \frac{3}{4}\rho_0$ and by \eqref{eq:compactness}
we have that $x\in (\Omega\setminus \overline{D_1})\setminus \mathcal{U}^{\rho_0/8}$ and \eqref{eq:reg_classica} applies.
Recalling also that $w_1\equiv 0$, $\nabla w_1\equiv 0$ on $\partial D_1$, we have that
\begin{equation}
   \label{eq:w_1small}
   |w_1 (x)|=|w_1 (x)-w_1 (y)|
   \leq
   C\rho_0^2
   \|\widehat{M}\|_{H^{-\frac{1}{2}}(\partial \Omega, \R^2)}
   \frac{\rho}{\rho_0},
\end{equation}

\begin{equation}
   \label{eq:w_1,nu_small}
   |w_{1,\nu} (x)|=|(\nabla w_1 (x)-\nabla w_1 (y))\cdot\nu(x)|
   \leq
   C\rho_0
   \|\widehat{M}\|_{H^{-\frac{1}{2}}(\partial \Omega, \R^2)}
   \frac{\rho}{\rho_0},
\end{equation}

\begin{equation}
   \label{eq:w_1,s_small}
   |w_{1,s} (x)|=|(\nabla w_1 (x)-\nabla w_1 (y))\cdot\tau(x)|
   \leq
   C\rho_0
   \|\widehat{M}\|_{H^{-\frac{1}{2}}(\partial \Omega, \R^2)}
   \frac{\rho}{\rho_0},
\end{equation}
where $C$ only depends on $\gamma$, $M$, $M_0$ and $M_1$.

{From} \eqref{eq:reg_classica}, \eqref{eq:w_1,nu_small}, \eqref{eq:w_1,s_small} and \eqref{eq:reg_4} we have

\begin{equation}
   \label{eq:stab3}
   \left|\int_{\tilde \Gamma_1^\rho}B_2(w_1)w_{1,\nu}+\widetilde{B}_2(w_1)w_{1,s}\right|
   \leq
   C\rho_0^2 \|\widehat{M}\|_{H^{-\frac{1}{2}}(\partial \Omega, \R^2)}^2 \frac{\rho}{\rho_0},
\end{equation}
where $C$ only depends on $\gamma$, $M$, $M_0$ and $M_1$.

Moreover, by \eqref{eq:w_1small} and \eqref{eq:reg_H4}, we have
\begin{multline}
   \label{eq:stab4}
   \left|\int_{\tilde \Gamma_1^\rho}B_3(w_1)w_1\right|\leq
   C\rho_0\|B_3(w_1)\|_{L^2(\tilde \Gamma_1^\rho)}\|w_1\|_{L^2(\tilde\Gamma_1^\rho)}\leq\\
   \leq\frac{C}{\rho_0^2}\|w_1\|_{H^3(\tilde\Gamma_1^\rho)}\|w_1\|_{L^\infty(\tilde\Gamma_1^\rho)}\leq
   \frac{C}{\rho_0^2}\|w_1\|_{H^4\left((\Omega\setminus \overline{D_1})\setminus \mathcal{U}^{\rho_0/8}\right)}
   \|w_1\|_{L^\infty(\tilde\Gamma_1^\rho)}\leq
   \\
   \leq
   C\rho_0^2 \|\widehat{M}\|_{H^{-\frac{1}{2}}(\partial \Omega, \R^2)}^2 \frac{\rho}{\rho_0},
\end{multline}
where $C$ only depends on $\gamma$, $M$, $M_0$ and $M_1$.

Let $x\in\tilde\Gamma_{2}^\rho$. By \eqref{eq:reg_2},
dist$(x,\partial D_2)\leq\gamma_1 \rho$. Let $y\in
\partial D_2$ such that $|y-x|=
\hbox{dist}(x,\partial D_2)\leq\gamma_1 \rho$.
Since $\gamma_1 \rho\leq \frac{3}{4}\rho_0$
we have that $x\in (\Omega\setminus \overline{D_2})\setminus \mathcal{U}^{\rho_0/8}$ and \eqref{eq:reg_classica} applies.
Recalling also that $w_2\equiv 0$, $\nabla w_2\equiv 0$ on $\partial D_2$, we have that
\begin{equation}
   \label{eq:w_1smallbis}
   |w_1 (x)|=|w (x)+w_2 (x)-w_2 (y)|
   \leq |w (x)|+
   C\rho_0^2
   \|\widehat{M}\|_{H^{-\frac{1}{2}}(\partial \Omega, \R^2)}
   \frac{\rho}{\rho_0},
\end{equation}

\begin{equation}
   \label{eq:w_1,nu_smallbis}
   |w_{1,\nu} (x)|=|w,_\nu (x)+\left(\nabla w_2 (x)-\nabla w_2 (y)\right)\cdot\nu(x)|
   \leq |\nabla w (x)|+
   C\rho_0
   \|\widehat{M}\|_{H^{-\frac{1}{2}}(\partial \Omega, \R^2)}
   \frac{\rho}{\rho_0},
\end{equation}

\begin{equation}
   \label{eq:w_1,s_smallbis}
   |w_{1,s} (x)|=|w,_s (x)+\left(\nabla w_2 (x)-\nabla w_2 (y)\right)\cdot\tau(x)|
   \leq |\nabla w (x)|+
   C\rho_0
   \|\widehat{M}\|_{H^{-\frac{1}{2}}(\partial \Omega, \R^2)}
   \frac{\rho}{\rho_0},
\end{equation}
where $C$ only depends on $\gamma$, $M$, $M_0$ and $M_1$.

{From} \eqref{eq:reg_classica} \eqref{eq:w_1,nu_smallbis}, \eqref{eq:w_1,s_smallbis} and \eqref{eq:reg_4} we have

\begin{equation}
   \label{eq:stab5}
   \left|\int_{\tilde \Gamma_2^\rho}B_2(w_1)w_{1,\nu}+\widetilde{B}_2(w_1)w_{1,s}\right|
   \leq
   C\rho_0^2 \|\widehat{M}\|_{H^{-\frac{1}{2}}(\partial \Omega, \R^2)}^2 \frac{\rho}{\rho_0}
   +C\rho_0 \|\widehat{M}\|_{H^{-\frac{1}{2}}(\partial \Omega, \R^2)}\max_{\tilde \Gamma_2^\rho}|\nabla w|,
\end{equation}
where $C$ only depends on $\gamma$, $M$, $M_0$ and $M_1$.

Moreover, by \eqref{eq:w_1smallbis} and \eqref{eq:reg_H4}, we have, similarly to above,
\begin{equation}
   \label{eq:stab6}
   \left|\int_{\tilde \Gamma_2^\rho}B_3(w_1)w_1\right|
   \leq
   C\rho_0^2 \|\widehat{M}\|_{H^{-\frac{1}{2}}(\partial \Omega, \R^2)}^2 \frac{\rho}{\rho_0}+
   C \|\widehat{M}\|_{H^{-\frac{1}{2}}(\partial \Omega, \R^2)}\max_{\tilde \Gamma_2^\rho}| w|,
\end{equation}
where $C$ only depends on $\gamma$, $M$, $M_0$ and $M_1$.

By \eqref{eq:stab1}--\eqref{eq:byparts}, \eqref{eq:stab3}, \eqref{eq:stab4}, \eqref{eq:stab5}
and \eqref{eq:stab6} we have
\begin{equation}
  \label{eq:stab7}
  \int_{D_2 \setminus \overline{D_1}} |\nabla^2 w_1|^2\leq
  C\rho_0^2 \|\widehat{M}\|_{H^{-\frac{1}{2}}(\partial \Omega, \R^2)}^2 \frac{\rho}{\rho_0}
  +C \|\widehat{M}\|_{H^{-\frac{1}{2}}(\partial \Omega, \R^2)}
  \|w\|_{C^1(\partial \tilde V^\rho\setminus \partial \Omega)},
\end{equation}
where $\|w\|_{C^1(\partial \tilde V^\rho\setminus \partial \Omega)}=
\max_{\partial \tilde V^\rho\setminus \partial \Omega}|w|+\rho_0\max_{\partial \tilde V^\rho\setminus \partial \Omega}|\nabla w|$, and
$C$ only depends on $\gamma$, $M$, $M_0$ and $M_1$.

Let us notice that $w\in H^2(G)$ satisfies

\begin{center}
\( {\displaystyle \left\{
\begin{array}{lr}
     {\rm div}({\rm div} (
      {\mathbb P}\nabla^2 w))=0,
      & \mathrm{in}\ G,
        \vspace{0.25em}\\
      ({\mathbb P} \nabla^2 w)n\cdot n=0, & \mathrm{on}\ \Sigma,
          \vspace{0.25em}\\
      {\rm div}({\mathbb P} \nabla^2 w)\cdot n+(({\mathbb P} \nabla^2
      w)n\cdot \tau),_s
      =0, & \mathrm{on}\ \Sigma,
        \vspace{0.25em}\\
      \|w\|_{L^2(\Sigma)} +\rho_0\|w,_n\|_{L^2(\Sigma)}\leq \epsilon.
          \vspace{0.25em}\\
\end{array}
\right. } \) \vskip -7.3em
\begin{eqnarray}
& & \label{eq:smallCauchy1}\\
& & \label{eq:smallCauchy2}\\
& & \label{eq:smallCauchy3}\\
& & \label{eq:smallCauchy4}
\end{eqnarray}

\end{center}
By the regularity of $w$ near $\Sigma$ (see Theorem \ref{theo:BoundNeuNonhom}) and by standard interpolation inequalities, we can apply to $w$ the stability estimate for the Cauchy
problem contained in \cite[Theorem 3.8]{M-R-V4}. Therefore, noticing that, by \eqref{eq:w_stima_H2},
$\|w\|_{L^2(G)}\leq \|w_1\|_{L^2(\Omega\setminus \overline{D_1})} + \|w_2\|_{L^2(\Omega\setminus \overline{D_2})}
\leq C\rho_0^2 \|\widehat{M}\|_{H^{-\frac{1}{2}}(\partial \Omega, \R^2)}$, with $C$ only depending on $\gamma$, $M_0$, $M_1$,
we have
\begin{equation}
  \label{eq:stab8}
  \|w\|_{L^2(R_{\frac{\rho_0}{2M_0},\frac{\rho_0}{2}}(P_0)\cap\Omega)}\leq C\epsilon^\tau
  \rho_0^{2(1-\tau)}\|\widehat{M}\|_{H^{-\frac{1}{2}}(\partial \Omega, \R^2)}^{1-\tau},
\end{equation}
where $C>0$, $\tau\in (0,1)$ only depend on $\gamma$, $M$, $\delta_1$, $M_0$ and $M_1$.

Let
\begin{equation}
  \label{eq:z_0}
  z_0=P_0-\frac{\rho_0}{4}n,
\end{equation}
where $n$ denotes as usual the outer unit normal to $\partial\Omega$ at $P_0$, and notice that
$B_{\frac{\rho_0}{4\sqrt{1+M_0^2}}}(z_0)\subset R_{\frac{\rho_0}{2M_0},\frac{\rho_0}{2}}(P_0)\cap\Omega$.

By \eqref{eq:stab8},

\begin{equation}
  \label{eq:stab9}
  \|w\|_{L^2\left(B_{\frac{\rho_0}{4\sqrt{1+M_0^2}}}(z_0)\right)}\leq C\epsilon^\tau
  \rho_0^{2(1-\tau)}\|\widehat{M}\|_{H^{-\frac{1}{2}}(\partial \Omega, \R^2)}^{1-\tau}.
\end{equation}
Let us consider the set $\tilde V^\rho\cap
\Omega_{\frac{\rho_0h_0}{4\sqrt{1+M_0^2}}}$. By the choice of
$\bar\rho$ and recalling that $h_0<1$, we have by straightforward computations that this set is
connected and contains $z_0$. If $x\in \tilde V^\rho\cap
\Omega_{\frac{\rho_0h_0}{4\sqrt{1+M_0^2}}}$ then, by the choice of
$\bar\rho$, $\hbox{dist}(x,\partial\Omega)>
\frac{\rho_0h_0}{4\sqrt{1+M_0^2}}\geq \gamma_0\rho$ and by
\eqref{eq:reg_2} $\hbox{dist}(x,\partial D_1\cup \partial D_2)\geq
\gamma_0\rho$. Therefore any disk of center in $\tilde V^\rho\cap
\Omega_{\frac{\rho_0h_0}{4\sqrt{1+M_0^2}}}$ and radius
$\gamma_0\rho$ is contained in $G$.

Let $x$ be any point in $\partial \tilde
V_\rho\setminus\partial\Omega$ and let $\gamma$ be a path in
$\tilde V^\rho\cap \Omega_{\frac{\rho_0h_0}{4\sqrt{1+M_0^2}}}$
joining $x$ to $z_0$. Let us define $\{x_i\}$, $i=1,...,s$, as
follows: $x_1=z_0$, $x_{i+1}=\gamma(t_i)$, where $t_i=\max\{t
\hbox{  s. t. }|\gamma(t)-x_i|=\frac{\gamma_0 \rho }{2} \}$, if
$|x_i-x|>\frac{\gamma_0 \rho }{2}$, otherwise let $i=s$ and stop
the process. By construction, the disks $B_{\frac{\gamma_0 \rho
}{4}}(x_i)$ are pairwise disjoint, $|x_{i+1}-x_i|=\frac{\gamma_0
\rho }{2}$, for $i=1,..., s-1$, $|x_s-x|\leq \frac{\gamma_0 \rho
}{2}$. Hence we have $s\leq S\left(\frac{\rho_0}{\rho}\right)^2$,
with $S=\frac{16M_1}{\pi^2\gamma_0^2}$ only depending on $M_0$
and $M_1$.

An iterated application of the three spheres inequality
\eqref{eq:3sph} to $w$ over the disks of center $x_i$ and radii $r_1=\frac{\gamma_0 \rho }{4}$,
$r_2=\frac{3\gamma_0 \rho }{4}$, $r_3=\gamma_0 \rho $, gives that
for every $\rho$, $0<\rho\leq \bar \rho$,
\begin{equation}
  \label{eq:stab10}
  \int_{B_{\frac{\gamma_0 \rho }{4}}(x)}|w|^2
  \leq
  C
  \left(
  \int_G|w|^2
  \right)^{1-\delta^s}
  \left(
  \int_{B_{\frac{\gamma_0 \rho }{4}}(z_0)}
  |w|^2
  \right)^{\delta^s},
\end{equation}
where $\delta$, $0<\delta<1$, $C\geq 1$, only depend on $\gamma$,
$M$ and $\delta_1$.

Since $B_{\frac{\gamma_0 \rho }{4}}(z_0)\subset B_{\frac{ \rho_0
}{4\sqrt{1+M_0^2}}}(z_0)$, by \eqref{eq:stab9}, \eqref{eq:stab10}
and \eqref{eq:w_stima_H2} we have
\begin{equation}
  \label{eq:stab11}
  \int_{B_{\frac{\gamma_0 \rho }{4}}(x)}|w|^2
  \leq
  C \rho_0^6
  \|\widehat{M}\|_{H^{- \frac{1}{2} }(\partial \Omega, \R^2)}^2
  \tilde\epsilon ^{2\tau\delta^s},
\end{equation}
where
\begin{equation}
     \label{eq:epsilon_tilde}
    \tilde\epsilon = \frac {\epsilon}
    { \rho_0^{2}
    \|\widehat{M}\|_{H^{- \frac{1}{2} }(\partial \Omega, \R^n2)} }.
\end{equation}

Let us recall the following interpolation inequality
\begin{equation}
  \label{eq:interpolation}
  \|v\|_{L^\infty (B_t)}
  \leq
  C\left(
  \left(
  \int_{B_t}|v|^2
  \right) ^{ \frac{1}{4} }
  |\nabla v|_{L^\infty (B_t)}^{\frac{1}{2}}
  +
  \frac{1}{t}
  \left(
  \int_{B_t}|v|^2
  \right)^{\frac{1}{2}}
  \right),
\end{equation}
which holds for any function $v\in W^{1,\infty} (B_t)$.

By applying \eqref{eq:interpolation} to $w$ in $B_{\frac{\gamma_0
\rho }{4}}(x)$ and by \eqref{eq:stab11} and
\eqref{eq:reg_classica}, we have
\begin{equation}
  \label{eq:stab12}
  \|w\|_{L^{\infty}(B_{\frac{\gamma_0 \rho }{4}}(x))}
  \leq
  C\rho_0^2 \|\widehat{M}\|_{H^{- \frac{1}{2} }(\partial \Omega, \R^2)}
  \frac{\rho_0}{\rho}
   \tilde\epsilon^{\frac{\tau\delta^s}{2} }.
\end{equation}
By the following interpolation inequality (see for instance
\cite{G-T})
\begin{equation}
  \label{eq:interpolation2}
  \|v\|_{C^1(B_t)}
  \leq
  C\|v\|_{C^0(B_t)}^{\frac{1}{2}}\|v\|_{C^2(B_t)}^{\frac{1}{2}}.
\end{equation}
we have, recalling \eqref{eq:reg_classica}, that
\begin{equation}
  \label{eq:stab13}
  \|w\|_{C^1(B_{\frac{\gamma_0 \rho }{4}}(x))}
  \leq
  C\rho_0^2 \|\widehat{M}\|_{H^{- \frac{1}{2} }(\partial \Omega, \R^2)}
  \left(\frac{\rho_0}{\rho}\right)^{\frac{1}{2}}
   \tilde\epsilon^{\frac{\tau\delta^s}{4} },
\end{equation}
so that
\begin{equation}
  \label{eq:stab14}
   \|w\|_{C^1(\partial \tilde V^\rho\setminus \partial \Omega)}\leq C\rho_0^2 \|\widehat{M}\|_{H^{-
\frac{1}{2} }(\partial \Omega, \R^2)}
  \left(\frac{\rho_0}{\rho}\right)^{\frac{1}{2}}
   \tilde\epsilon^{\frac{\tau\delta^s}{4} }.
\end{equation}
{}From \eqref{eq:stab7} and \eqref{eq:stab14} we have that, for
every $\rho\leq\bar\rho$,
\begin{equation}
  \label{eq:stab15}
  \int_{D_2 \setminus \overline{D_1}} |\nabla^2 w_1|^2\leq
  C\rho_0^2 \|\widehat{M}\|_{H^{-\frac{1}{2}}(\partial \Omega, \R^2)}^2 \left(\frac{\rho}{\rho_0}
  +\frac{\rho_0}{\rho} \tilde\epsilon^{\frac{\tau\delta^s}{4}
  }\right).
\end{equation}
Let us set
$\bar\mu=\exp\left\{-\frac{4}{\tau}\exp\left(\frac{2S|\log\delta|}{\vartheta^2}\right)\right\}$,
$\tilde\mu=\min\left\{\bar\mu,\exp\left(-\frac{16}{\tau^2}\right)\right\}$.
We have that $\tilde\mu$, $\tilde\mu<e^{-1}$, only depends on
$\gamma$, $M$, $\delta_1$, $M_0$ and $M_1$. Let $\tilde
\epsilon\leq\tilde\mu$, and let
\begin{equation}
    \label{rho_di_eps}
    \rho(\tilde\epsilon)=\rho_0
    \left(
    \frac{2S|\log\delta|}{\log|\log \tilde\epsilon
    ^{\frac{\tau}{4}}|}
    \right)^{\frac{1}{2}}.
\end{equation}
Since $\rho(\tilde\epsilon)$ is increasing in $(0,e^{-1})$ and
since $\rho(\tilde\mu)\leq \rho(\bar\mu)=\rho_0\theta= \bar \rho$,
we can apply inequality \eqref{eq:stab15} with
$\rho=\rho(\tilde\epsilon)$, obtaining
\begin{equation}
  \label{eq:stab16}
  \int_{D_2 \setminus \overline{D_1}}| {\nabla}^2 w_1|^2
  \leq
  C \rho_0^2
  \|\widehat{M}\|_{H^{- \frac{1}{2}  }(\partial \Omega, \R^2)}^2
  \left(
  \log\left|\log\tilde\epsilon^{\frac{\tau}{4}}\right|
  \right)^{-\frac{1}{2}},
\end{equation}
where $C$ only depends on $\gamma$, $M$, $\delta_1$, $M_0$ and
$M_1$.

Since $\tilde\epsilon\leq \exp(-\frac{16}{\tau^2})$, we have that
$\log\frac{\tau}{4}\geq-\frac{1}{2}\log|\log\tilde\epsilon|$, so
that
\begin{equation}
  \label{eq:stab17}
  \log\left|\log\tilde\epsilon ^{\frac{\tau}{4}}\right|
  \geq
  \frac{1}{2}
  \log\left| \log\tilde\epsilon\right|.
\end{equation}
{}From \eqref{eq:stab16} and \eqref{eq:stab17} we have
\begin{equation}
  \label{eq:stab16bis}
  \int_{D_2 \setminus \overline{D_1}} |{\nabla}^2 w_1|^2
  \leq
  \rho_0^2 \|\widehat{M} \|_{H^{- \frac{1}{2} }(\partial \Omega,
  \R^2)}^2 \omega(\tilde \epsilon),
\end{equation}
with
\begin{equation}
  \label{eq:omega}
  \omega(t)=C \left (\log |\log t| \right )^{ -\frac{1}{2}} \quad \hbox{for every \ } t<e^{-1},
\end{equation}
where $C>0$ is a constant only depending on $\gamma$, $M$,
$\delta_1$, $M_0$ and $M_1$.

\noindent \textbf{Step 2.} Let $\overline{g}\not \equiv 0$.

Let
\begin{equation}
    \label{eq:rho_tilde}
    \tilde \rho = \rho (\tilde \epsilon),
\end{equation}
where $\rho(\tilde \epsilon)$ is given by \eqref{rho_di_eps}.

We have two cases:

\begin{enumerate}[I)]
  \item $\partial\tilde D_1^{\tilde \rho} \cap \tilde \Gamma_2^{\tilde
  \rho}= \emptyset$;
  \item $\partial\tilde D_1^{\tilde \rho} \cap \tilde \Gamma_2^{\tilde
  \rho}\neq \emptyset$.
\end{enumerate}

When I) holds, there are three possible subcases:
\item {Ia)}
$\tilde D_1^{\tilde \rho} \cap \tilde D_2^{\tilde \rho} =
\emptyset$,
\item {Ib)} $\tilde D_1^{\tilde \rho} \subset \tilde
D_2^{\tilde \rho}$,
\item {Ic)} $\tilde D_2^{\tilde \rho} \subset
\tilde D_1^{\tilde \rho}$.

In case Ia) we have that $( \Omega \setminus \tilde V_{\tilde
\rho} ) \setminus \tilde D_1^{\tilde \rho} = \tilde D_2^{\tilde
\rho}$ and, therefore, $ \partial \left ( ( \Omega \setminus
\tilde V_{\tilde \rho} ) \setminus \tilde D_1^{\tilde \rho} \right
) =
\partial \tilde D_2^{\tilde \rho}$, whereas in case Ib)
$( \Omega \setminus \tilde V_{\tilde \rho} ) \setminus \tilde
D_1^{\tilde \rho} = \tilde D_2^{\tilde \rho}\setminus \tilde
D_1^{\tilde \rho}$.

For both cases, by applying the divergence theorem to $w_1$ in
$\tilde D_2^{\tilde \rho}$ and in $\tilde D_2^{\tilde
\rho}\setminus D_1$ respectively and by taking into account
\eqref{eq:equil-rigid-incl}, we have
\begin{equation}
    \label{eq:no-flux-u1-D2rho}
    \int_{\partial \tilde D_2^{\tilde \rho}}
    \left ( {\rm div}({\mathbb P} \nabla^2 w_1)\cdot \nu+(({\mathbb P} \nabla^2
  w_1)\nu\cdot \tau),_s \right )g - (({\mathbb P} \nabla^2 w_1)\nu\cdot \nu)
  g_{,\nu} =0, \\   \quad \hbox{for every } g\in \mathcal{A},
\end{equation}
with $\nu$ denoting the  unit outer normal to $D_2^{\tilde \rho}$, and $s$ an arclength with associated parametrization
$\varphi(s)$,  such that
$\varphi'(s)=\tau(\varphi(s))$, where $\tau=e_3 \times \nu$.

By applying the estimates of continuation from Cauchy data
\eqref{eq:stab14} obtained in the above step to the function $w$ defined by \eqref{eq:def-w}, we have
\begin{equation}
  \label{eq:stab18}
   \|w\|_{C^1(\partial \tilde V^\rho\setminus \partial \Omega)}\leq
   C\rho_0^2 \|\widehat{M}\|_{H^{-\frac{1}{2} }(\partial \Omega, \R^2)}
  \left(\frac{\rho_0}{\rho}\right)^{\frac{1}{2}}
   \tilde\epsilon^{\frac{\tau\delta^s}{4} }.
\end{equation}
By recalling that $w_i =0$, $\nabla w_i =0$ on $\partial D_i$,
$i=1,2$, and by arguing similarly to Step 1,
we have, for both cases, that
\begin{multline}
  \label{eq:int-D2menoD1}
  \int_{D_2 \setminus \overline{D_1}} |{\nabla}^2 w_1|^2
  \leq
  \int_{\tilde D_2^{\tilde \rho}\setminus \overline{D_1}}|{\nabla}^2 w_1|^2
  \leq
  \\
  \leq
    \gamma^{-1}
    \int_{\partial \tilde D_2^{\tilde \rho} }
    \left ( {\rm div}({\mathbb P} \nabla^2 w_1)\cdot \nu+(({\mathbb P} \nabla^2
  w_1)\nu\cdot \tau),_s \right )w_1 - (({\mathbb P} \nabla^2 w_1)\nu\cdot \nu)
  w_{1,\nu}
   =
   \\
   =
   \gamma^{-1}
   \int_{\partial \tilde D_2^{\tilde \rho}}
   \left ( {\rm div}({\mathbb P} \nabla^2 w_1)\cdot \nu+(({\mathbb P} \nabla^2
  w_1)\nu\cdot \tau),_s \right )w_2 - (({\mathbb P} \nabla^2 w_1)\nu\cdot \nu)
  w_{2,\nu}+
  \\
  +
  \gamma^{-1}
   \int_{\partial \tilde D_2^{\tilde \rho}}
   \left ( {\rm div}({\mathbb P} \nabla^2 w_1)\cdot \nu+(({\mathbb P} \nabla^2
  w_1)\nu\cdot \tau),_s \right )w - (({\mathbb P} \nabla^2 w_1)\nu\cdot \nu)
  w,_{\nu}
   \leq
   \\
   \leq
    C \rho_0^2
  \|\widehat{M}\|_{H^{- \frac{1}{2}  }(\partial \Omega, \R^2)}^2
  \left(
  \frac{\tilde \rho}{\rho_0}
    +
  \frac{\rho_0}{\tilde \rho}
  \tilde\epsilon ^{\frac{\tau}{4}\delta^s}
  \right)
  =
    \rho_0^2
 \|\widehat{M}\|_{H^{- \frac{1}{2}  }(\partial \Omega, \R^2)}^2
  \omega(\tilde \epsilon).
\end{multline}

In case Ic), by using \eqref{eq:reg_3}, we have
\begin{equation}
    \label{eq:Ic1}
|D_2\setminus \overline{D_1}|\leq  |\tilde D_1^{\tilde
\rho}\setminus D_1| \leq C\rho_0\tilde\rho,
\end{equation}
with $C$ only depending on $M_0$, $M_1$. By \eqref{eq:Ic1} and by
\eqref{eq:reg_classica}, we have
\begin{equation}
    \label{eq:Ic2}
\int_{D_2\setminus \overline{D_1}} |{\nabla}^2 w_1|^2 \leq
C\rho_0\tilde\rho\|\widehat{M}\|_{H^{- \frac{1}{2}  }(\partial
\Omega, \R^2)}^2,
\end{equation}
with $C$ only depending on $\gamma$, $M$, $M_0$,
$M_1$, so that the thesis follows.

Let us consider now case II). In view of the above arguments,
\begin{equation}
    \label{eq:stab19}
    \|w\|_{C^1( \partial \tilde V_{\tilde \rho}\setminus\partial\Omega)}
    \leq
    C\rho_0^2
    \|\widehat{M}\|_{H^{- \frac{1}{2} }(\partial \Omega, \R^2)} \omega(\tilde\epsilon),
\end{equation}
where $C>0$ only depends on $\gamma$, $M$, $\delta_1$, $M_0$,
$M_1$.

Let $z\in \partial\tilde D_1^{\tilde \rho} \cap \tilde
\Gamma_2^{\tilde
  \rho}$. We have that
\begin{equation}
    \label{eq:stab20}
    |w(z)|+\rho_0|\nabla w(z)|
    \leq
    C\rho_0^2
    \|\widehat{M}\|_{H^{- \frac{1}{2} }(\partial \Omega, \R^2)}
    \omega(\tilde\epsilon),
\end{equation}
where $C>0$ only depends on $\gamma$, $M$, $\delta_1$, $M_0$,
$M_1$.
On the other hand, by using the homogeneous Dirichlet conditions
on the boundaries of the rigid inclusions,
\begin{equation}
    \label{eq:stab21}
    |w_i(z)|+\rho_0|\nabla w_i(z)|
    \leq
    C\rho_0^2
    \|\widehat{M}\|_{H^{- \frac{1}{2} }(\partial \Omega, \R^2)}
    \frac{\tilde\rho}{\rho_0}\leq
    C\rho_0^2
    \|\widehat{M}\|_{H^{- \frac{1}{2} }(\partial \Omega, \R^2)}
    \omega(\tilde\epsilon),
\end{equation}
where $C>0$ only depends on $\gamma$, $M$, $\delta_1$, $M_0$,
$M_1$.
By \eqref{eq:stab20}--\eqref{eq:stab21}, we get
\begin{equation}
    \label{eq:stab22}
    |\overline{g}(z)|+\rho_0|\nabla \overline{g}(z)|
    \leq
    C\rho_0^2
    \|\widehat{M}\|_{H^{- \frac{1}{2} }(\partial \Omega, \R^2)}
   \omega(\tilde\epsilon).
\end{equation}
Let $\overline{g}(x)=ax_1+bx_2+c$. Then, by \eqref{eq:stab22}, we
have
\begin{equation}
    \label{eq:asmall}
    |a|
    \leq
    C\rho_0
    \|\widehat{M}\|_{H^{- \frac{1}{2} }(\partial \Omega, \R^2)}
    \omega(\tilde\epsilon),
\end{equation}
\begin{equation}
    \label{eq:bsmall}
    |b|
    \leq
    C\rho_0
    \|\widehat{M}\|_{H^{- \frac{1}{2} }(\partial \Omega, \R^2)}
    \omega(\tilde\epsilon),
\end{equation}
\begin{equation}
    \label{eq:csmall}
    |c|\leq|\overline{g}(z)|+|a||z|+|b||z|
    \leq
    C\rho_0^2
    \|\widehat{M}\|_{H^{- \frac{1}{2} }(\partial \Omega, \R^2)}
    \omega(\tilde\epsilon),
\end{equation}
where we are assuming for simplicity that the origin belongs to
$\Omega$. It follows that
\begin{equation}
    \label{eq:stab23}
    \|\overline{g}\|_{C^1(\overline{\Omega})}\leq
    C\rho_0^2
    \|\widehat{M}\|_{H^{- \frac{1}{2} }(\partial \Omega, \R^2)}
    \omega(\tilde\epsilon).
\end{equation}
By repeating the arguments of Step 1 for $w=w_1-w_2-\overline{g}$,
we have the additional term $|\overline{g}|$ which is controlled
by \eqref{eq:stab23}, and the thesis follows similarly.

The proof of the better rate of convergence \eqref{eq:omega_cauchy_log}, under
Lipschitz regularity condition on the connected components of $\partial G$
can be obtained by merging the geometrical construction illustrated in \cite{M-R1}
and the arguments seen above.

\end{proof}

\section{Proof of Lemma \ref{lem:H^3} and Lemma \ref{lem:H^4}}
\label{sec: regularity}

Let us denote by
\begin{equation}
    \label{eq:r2.e1}
    B_{\sigma}^+=
    \{(y_1,y_2) \in \R^2 | \ y_1^2+y_2^2 < \sigma^2, \ y_2>0 \}
\end{equation}
the hemidisk of radius $\sigma$, $\sigma > 0$, and let
\begin{equation}
    \label{eq:r2.e2}
    \Gamma_{\sigma}= \{(y_1,y_2) \in \R^2 | \ -\sigma\leq y_1 \leq \sigma, \ y_2=0\},
\end{equation}
\begin{equation}
    \label{eq:r2.e3}
    \Gamma_{\sigma}^+ = \partial B_{\sigma}^+ \setminus
    \Gamma_{\sigma}.
\end{equation}
Moreover, let
\begin{equation}
    \label{eq:r2.e3bis}
    H^2_{ \Gamma_{\sigma}^+}  (B_{\sigma}^+)= \left\{ g \in H^2
    (B_{\sigma}^+) | \  g=0, \ \frac{\partial g}{\partial n}=0 \ on \
    \Gamma_{\sigma}^+ \right\}.
\end{equation}
\begin{proof} [Proof of Lemma \ref{lem:H^3}]
It is not restrictive to assume $\rho_0=1$. By the regularity of
$\partial \Omega$ and $\partial D$, and by \eqref{eq:compactness},
we can construct a finite collection of open sets $\Omega_0$,
$\{\Omega_j\}_{j=1}^N$, $\{\Omega'_i\}_{i=1}^{N'}$ such that
\begin{equation}
    \label{eq:r2.e4a}
    (\Omega_j \cap \Omega) \subset (\Omega \setminus \overline{D}), \ j=1,...,N,
    \quad (\Omega'_i \cap \Omega) \subset (\Omega \setminus \overline{D}), \ i=1,...,N',
\end{equation}
\begin{equation}
    \label{eq:r2.e4b}
    \Omega_j \cap \Omega'_i = \emptyset, \quad i=1,...,N', \
    j=1,...,N,
\end{equation}
\begin{equation}
    \label{eq:r2.e4}
    \Omega \setminus \overline{D} = \Omega_0 \cup
    \left ( \cup_{j=1}^N {\cal{T}}_{(j)}^{-1}(B_{ \frac{1}{2}}^+) \right
    ) \cup
    \left ( \cup_{i=1}^{N'} {\cal{T'}}_{(i)}^{-1}(B_{ \frac{1}{2}}^+) \right
    ),
\end{equation}
\begin{equation}
    \label{eq:r2.e4c}
    \Omega_0 \subset ( \Omega \setminus
    \overline{D})_{\delta_0},
\end{equation}
where $\delta_0$ only depends on $M_0$. Here, ${\cal{T}}_{(j)}$,
$j=1,...,N$, is a homeomorphism of $C^{2,1}$ class which maps
$\Omega_j \cap \Omega$ into $B_1^+$, $\overline{\Omega}_j\cap
\partial \Omega$ into $\Gamma_1$ and $\partial \Omega_j \cap
(\Omega \setminus \overline{D})$ into $\Gamma_1^+$. Similarly,
${\cal{T'}}_{(i)}$ is an homeomorphism of $C^{2,1}$ class which
maps $\Omega'_i \cap \Omega$ into $B_1^+$,
$\overline{\Omega'}_i\cap
\partial D$ into $\Gamma_1$ and $\partial \Omega'_i \cap (\Omega
\setminus \overline{D})$ into $\Gamma_1^+$, $i=1,...,N'$. It can
be shown that every mapping ${\cal{T}}_{(j)}$, ${\cal{T'}}_{(i)}$,
$i=j,...,N$, $i=1,...,N'$, can be chosen such that the Jacobian of
the transformation is identically equal to one, see \cite{l:agmon}
(p. 129). By the regularity of $\partial \Omega$ and $\partial D$
and by \eqref{eq:bound_area}, the numbers $N$, $N'$ are controlled
by a constant only depending on $M_0$ and $M_1$.

By covering $\Omega_0$ with a finite number of spheres contained
in $\Omega \setminus \overline{D}$ and using local interior
regularity results (see, for instance, \cite{M-R-V1}, Theorem
$8.3$), we have that $w\in H^3(\Omega_0)$ and
\begin{equation}
    \label{eq:r3.e1}
    \|w\|_{H^3(\Omega_0)} \leq C \|w\|_{H^2(\Omega \setminus
    \overline{D})},
\end{equation}
where the constant $C>0$ only depends on $M_1$,
$\|\mathbb{P}\|_{C^{0,1}(\overline{\Omega})}$ and $\gamma$.

Let us fix $j$, $1\leq j\leq N$, and let us show that an analogous
estimate holds true for $\Omega_j \cap \Omega$ near the boundary
$\partial \Omega$ where the Neumann data $\widehat{M}$ is given.

The function $w$, solution to
\eqref{eq:dir-pbm-incl-rig-1}-\eqref{eq:equil-rigid-incl},
satisfies
\begin{multline}
    \label{eq:r3.e2}
    \int_{\Omega_j \cap \Omega } \mathbb P \nabla^2 w \cdot \nabla^2 \varphi =
    -\int_{\overline{ \Omega}_j \cap \partial \Omega}
    \left (
    \widehat{M}_n\varphi_{,n} + (\widehat{M}_\tau)_{,s}\varphi
    \right ) ds,  \\ \hbox{for every }
    \varphi \in H^2_{\partial \Omega_j \cap \Omega} (\Omega_j \cap \Omega).
\end{multline}
We define
\begin{equation}
    \label{eq:r4.e1}
    y= {\cal T}_{(j)} (x), \qquad y \in B_{1}^+,
\end{equation}
\begin{equation}
    \label{eq:r4.e2}
    x= {\cal T}_{(j)}^{-1} (y), \qquad x \in \Omega_j \cap \Omega,
\end{equation}
\begin{equation}
    \label{eq:r4.e3}
    u(y) = w ( {\cal T}_{(j)}^{-1} (y)).
\end{equation}
Then, changing the variables in \eqref{eq:r3.e2}, the function $u$
belongs to $H^2(B_{1}^+)$ and satisfies
\begin{equation}
    \label{eq:r4.e4}
    a_+(u, \psi)=l_+(\psi), \qquad \hbox{for every } \psi \in
    H^2_{\Gamma_1^+}(B_1^+),
\end{equation}
where
\begin{multline}
    \label{eq:r4.e5}
    a_+(u, \psi)=
    \\
    =\int_{B_{1}^+}
    \left (
    \mathbb Q \nabla^2u \cdot
    \nabla^2 \psi + \mathbb A \nabla^2u \cdot
    \nabla \psi + \mathbb B \nabla u \cdot
    \nabla^2 \psi + \mathbb D \nabla u \cdot
    \nabla \psi
    \right )
    \left | \det \frac{\partial {\cal T}_{(j)}   }{\partial x} \right
    |^{-1} dy,
\end{multline}
\begin{multline}
    \label{eq:r4.e6}
    l_+(\psi)=
    - \int_{\Gamma_1}
    \left (
    \widehat{{\cal{M}}}_n \frac{\partial {\cal T}_{(j)}   }{\partial x}
    \left ( \frac{\partial {\cal T}_{(j)}   }{\partial x} \right
    )^T
    \nabla \psi \cdot \nu
    \left | \left ( \frac{\partial {\cal T}_{(j)}   }{\partial x} \right
    )^{-T} n \right |  +  \right. \\
    \left. +  \widehat{ {\cal{M}}}_{\tau, \xi}
    \left | \frac{\partial {\cal T}_{(j)}   }{\partial x} \tau \right |
    \psi \right ) \left | \frac{\partial {\cal T}_{(j)}   }{\partial x} \tau \right
    |^{-1} d\xi,
\end{multline}
with
\begin{equation}
    \label{eq:r5.e1}
    S_{kr} = \frac{\partial {\cal T}_{(j)k}}{\partial x_r}, \qquad
    R_{ksr} = \frac{\partial^2  {\cal T}_{(j)k}  }{\partial x_s \partial
    x_r},
\end{equation}
\begin{equation}
    \label{eq:r5.e2}
    Q_{lkmn} = \sum_{i,j,r,s=1}^2
    P_{ijrs}S_{ls}S_{kr}S_{mi}S_{nj},
\end{equation}
\begin{equation}
    \label{eq:r5.e3}
    A_{lkn} = \sum_{i,j,r,s=1}^2
    P_{ijrs}S_{ls}S_{kr}R_{nij},
\end{equation}
\begin{equation}
    \label{eq:r5.e4}
    B_{kmn} = \sum_{i,j,r,s=1}^2
    P_{ijrs}S_{mi}S_{nj}R_{ksr},
\end{equation}
\begin{equation}
    \label{eq:r5.e5}
    D_{kn} = \sum_{i,j,r,s=1}^2
    P_{ijrs}R_{ksr}R_{nij}
\end{equation}
and
\begin{equation}
    \label{eq:r5.e6}
    \psi(y) = \varphi({\cal T}_{(j)}^{-1} (y)), \qquad \psi \in H_{\Gamma_1^+}^2(B_1^+),
\end{equation}
\begin{equation}
    \label{eq:r5.e7}
    \widehat{{\cal{M}}}_n(y)= \widehat{M}_n ( {\cal T}_{(j)}^{-1} (y) ), \qquad
    \widehat{{\cal{M}}}_\tau(y)= \widehat{M}_\tau ( {\cal T}_{(j)}^{-1} (y) )
\end{equation}
and where $\nu$ is the unit outer normal to $B_1^+$. By
\eqref{eq:r5.e2} and \eqref{eq:sym-conditions-C-components},
\eqref{eq:3.convex}, the components of $\mathbb Q$ satisfy the
symmetry conditions
\begin{equation}
    \label{eq:r5.e8}
    Q_{\alpha \beta \gamma \delta} = Q_{ \gamma \delta \alpha \beta} =
    Q_{ \gamma \delta \beta \alpha },  \quad \alpha, \beta, \gamma, \delta
    =1,2, \hbox{ in } \overline{B_1^+},
\end{equation}
and the strong convexity condition
\begin{equation}
    \label{eq:r5.e9}
    \mathbb Q A \cdot A \geq \gamma_0 |A|^2, \hbox{ in }
    \overline{B_1^+},
\end{equation}
for every $2 \times 2$ symmetric matrix $A$, where $\gamma_0$,
$\gamma_0>0$, is a constant only depending on $\gamma$ and $M_0$.

By the regularity assumptions on $\mathbb P$, the tensors $\mathbb
Q $, $\mathbb A$, $\mathbb B$, $\mathbb D$ belong to
$C^{0,1}(\overline{B_1^+})$. Moreover, $\widehat{{\cal{M}}}_n \in
H^{\frac{1}{2}}(\Gamma_1)$ and $\widehat{{\cal{M}}}_{\tau,\xi} \in
H^{-\frac{1}{2}}(\Gamma_1)$.

Now, we use the following regularity result up to the boundary for
the solution $u$ to the problem \eqref{eq:r4.e4}, which is proved
in the Appendix.
\begin{theo}[Boundary regularity for non-homogeneous Neumann conditions]
  \label{theo:BoundNeuNonhom}
Let $u \in H^2(B_1^+)$ defined by \eqref{eq:r4.e3} be the solution
to \eqref{eq:r4.e4}, where the tensors $\mathbb Q $, $\mathbb A$,
$\mathbb B$, $\mathbb D$ and the couple field
$\widehat{{\cal{M}}}$ are defined as above. Then, $u\in H^3\left
(B_{\frac{1}{2}}^+\right )$ and we have
\begin{equation}
    \label{eq:r6bis.e1}
    \|u\|_{H^3\left (B_{\frac{1}{2}}^+\right )} \leq
    C
    \left (
    \|\widehat{\cal{M}}\|_{H^{\frac{1}{2}}(\Gamma_1)} +
    \|u\|_{H^2(B_1^+)}
    \right ),
\end{equation}
where $C>0$ only depends on $M_0$,
$\|\mathbb{P}\|_{C^{0,1}(\overline{\Omega})}$ and $\gamma$.
\end{theo}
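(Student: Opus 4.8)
The plan is to localize near the flat Neumann portion $\Gamma_1$ and to combine tangential difference quotients (for the derivatives tangent to $\Gamma_1$) with the equation itself (for the remaining pure normal third derivative). Since $\overline{B_{\frac12}^+}$ meets $\partial B_1^+$ only along $\Gamma_1$ and lies at distance $\frac12$ from $\Gamma_1^+$, a covering argument reduces \eqref{eq:r6bis.e1} to interior $H^3$ estimates for the fourth-order equation (classical, and already invoked in the proof of Lemma \ref{lem:H^3}) together with a local $H^3$ bound near an arbitrary point $P\in\Gamma_1$. Near $P$ I would fix $\zeta\in C_c^\infty(B_{\frac34})$ with $\zeta\equiv 1$ on $B_{\frac12}$ and $\zeta\equiv 0$ in a neighbourhood of $\Gamma_1^+$, and regard \eqref{eq:r4.e4} as a Neumann problem for the strongly elliptic fourth-order operator with principal tensor $\mathbb Q$, the $C^{0,1}$ tensors $\mathbb A,\mathbb B,\mathbb D$ contributing only lower-order terms.

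For $e_1$ tangent to $\Gamma_1$ and small $h$, set $\tau_h v(y)=h^{-1}(v(y+he_1)-v(y))$ and test \eqref{eq:r4.e4} with $\psi=\tau_{-h}(\zeta^2\tau_h u)$, which belongs to $H^2_{\Gamma_1^+}(B_1^+)$ because finite differences of $H^2$ functions are again $H^2$ and $\zeta$ vanishes near $\Gamma_1^+$. Using the discrete integration by parts $\int(\tau_h f)g=-\int f\,\tau_{-h}g$, the strong convexity \eqref{eq:r5.e9} to bound $\int\zeta^2|\nabla^2\tau_h u|^2$ from below, and the Lipschitz bounds on $\mathbb Q,\mathbb A,\mathbb B,\mathbb D$ to estimate the commutator and lower-order contributions uniformly in $h$, the only delicate term is $l_+(\psi)$: for this I would first integrate by parts along $\Gamma_1$ to turn $(\widehat{\mathcal M}_\tau)_{,\xi}\psi$ into $\widehat{\mathcal M}_\tau\psi_{,\xi}$ (endpoint terms vanish by the support of $\zeta$), then move $\tau_{-h}$ onto the transformed data, using $\|\tau_h\widehat{\mathcal M}\|_{H^{-\frac12}(\Gamma_1)}\le C\|\widehat{\mathcal M}\|_{H^{\frac12}(\Gamma_1)}$ and a trace inequality for $\nabla(\zeta^2\tau_h u)$, and finally absorb the resulting $\|\zeta\nabla^2\tau_h u\|_{L^2}$ into the coercive term via Young's inequality. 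This gives $\|\nabla^2\tau_h u\|_{L^2(B_{\frac12}^+)}\le C(\|\widehat{\mathcal M}\|_{H^{\frac12}(\Gamma_1)}+\|u\|_{H^2(B_1^+)})$ uniformly in $h$, hence $\partial_1 u\in H^2(B_{\frac12}^+)$ with the same bound; in particular $\partial_{111}u,\partial_{112}u,\partial_{122}u\in L^2(B_{\frac12}^+)$ and $\partial_{11}u,\partial_{12}u\in H^1(B_{\frac12}^+)$.

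It remains to control $\partial_2^3 u$. Set $\sigma_{22}:=(\mathbb Q\nabla^2 u)_{22}=Q_{2222}\partial_{22}u+2Q_{2212}\partial_{12}u+Q_{2211}\partial_{11}u$; the previous step gives $\partial_1\sigma_{22}\in L^2(B_{\frac12}^+)$ and $\partial_1\sigma_{11},\partial_1\sigma_{12}\in L^2(B_{\frac12}^+)$. After transferring the $\mathbb A,\mathbb B,\mathbb D$ terms to the right-hand side — where they produce an $H^{-1}$ source together with contributions to the natural boundary data on $\Gamma_1$ of class $H^{\frac12}$ and $H^{-\frac12}$ — the weak formulation identifies $\partial_{22}\sigma_{22}$ with $-\partial_{11}\sigma_{11}-2\partial_{12}\sigma_{12}$ plus that $H^{-1}$ source; since $\partial_1\sigma_{11},\partial_1\sigma_{12}\in L^2$, both terms lie in $H^{-1}$, so $\partial_2\sigma_{22}$ has its full gradient in $H^{-1}$ and therefore belongs to $L^2$. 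Hence $\sigma_{22}\in H^1(B_{\frac12}^+)$, and dividing by $Q_{2222}\ge\gamma_0>0$ — a Lipschitz function bounded away from zero — yields $\partial_{22}u\in H^1(B_{\frac12}^+)$, i.e.\ $\partial_2^3 u\in L^2(B_{\frac12}^+)$. Collecting the bounds and absorbing the lower-order $\|u\|_{H^1(B_1^+)}$, $\|u\|_{L^2(B_1^+)}$ terms into $\|u\|_{H^2(B_1^+)}$ by interpolation gives \eqref{eq:r6bis.e1}.

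The main obstacle is the difference-quotient step. The asymmetry of the boundary conditions forces $\zeta$ to vanish near $\Gamma_1^+$ and the difference quotients to be taken genuinely tangentially to $\Gamma_1$, and because $\widehat{\mathcal M}$ is only $H^{\frac12}$ its difference quotient is merely bounded in $H^{-\frac12}$, so the boundary term can be closed only after pairing it against a trace of $\nabla\psi$ whose norm is itself the quantity being estimated and then absorbing; the same asymmetry also demands care in the near-$\Gamma_1$ bookkeeping of the lower-order tensors coming from the change of variables. A related point is that, $\mathbb Q$ being only $C^{0,1}$, the argument must remain variational — one cannot differentiate the equation — which is why exactly one derivative is gained (hence $H^3$, and not $H^4$), and why the pure normal third derivative must be recovered from the identification $\sigma_{22}\in H^1(B_{\frac12}^+)$ rather than by solving the equation for $\partial_2^4 u$.
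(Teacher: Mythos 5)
Your proposal is correct and follows essentially the same strategy as the paper's Appendix: a cutoff localization near $\Gamma_1$, tangential difference quotients to bound $\partial_1\nabla^2 u$ in $L^2$ via the coercivity \eqref{eq:r5.e9}, and recovery of the pure normal third derivative from the equation and the already-established tangential regularity. There are two small technical variants worth noting, neither of which changes the substance: for the Neumann term the paper never moves the difference quotient onto $\widehat{\mathcal M}$ (avoiding commutators with the Lipschitz Jacobian factors in $l_+$) but instead bounds $l_+(\vartheta\tau_{1,-s}\psi)$ directly through trace and Poincar\'e inequalities, keeping the translation inside the test function; and for $\partial_2^3 u$ the paper invokes Agmon's Lemma~9.3 applied to $g=Q_{22\gamma\delta}u_{,\gamma\delta}$ rather than arguing via $\nabla(\partial_2\sigma_{22})\in H^{-1}$ and the Ne\v{c}as--Lions lemma, which is what your argument implicitly uses and which should be cited if you go that route. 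Both choices lead to the same estimates with constants depending only on $M_0$, $\|\mathbb P\|_{C^{0,1}}$ and $\gamma$; if you retain your version, do spell out the commutator terms coming from the Jacobian entries when you pass $\tau_{-h}$ across the boundary functional, since $\widehat{\mathcal M}_\tau$ being only $H^{1/2}$ makes that bookkeeping, while routine, the one place a sign-off reviewer would want to see written out.
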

By applying the homeomorphism ${\cal T}_{(j)}$ to
\eqref{eq:r6bis.e1} we have
\begin{equation}
    \label{eq:r6.e2}
    \|w\|_{H^3 \left ({\cal{T}}_{(j)}^{-1}\left (B_{\frac{1}{2}}^+\right ) \right )} \leq C \left (
    \| \widehat{M}  \|_{H^{ \frac{1}{2} } ( {\Omega}_j \cap \partial\Omega  )} +
    \|w\|_{ H^2 (\Omega_j \cap \Omega)} \right ),
\end{equation}
where the constant $C>0$ only depends on $M_0$, $\gamma$ and
$\|\mathbb P\|_{C^{0,1}( \overline{\Omega})}$.

We now derive an estimate analogous to \eqref{eq:r6.e2} near the
boundary of the rigid inclusion $D$. Let us fix $i$, $1 \leq i
\leq N'$. The function $w$, solution to
\eqref{eq:dir-pbm-incl-rig-1}-\eqref{eq:equil-rigid-incl},
satisfies
\begin{equation}
    \label{eq:r7.e1}
    \int_{\Omega'_i \cap \Omega } \mathbb P \nabla^2 w \cdot \nabla^2 \varphi =0, \qquad \hbox{for every }
    \varphi \in H^2_0 (\Omega'_i \cap \Omega).
\end{equation}
Then, by introducing the transformation
\begin{equation}
    \label{eq:r7.e2}
    y= {\cal {T'}}_{(i)} (x), \qquad y \in B_{1}^+,
\end{equation}
\begin{equation}
    \label{eq:r7.e3}
    x= {\cal {T'}}_{(i)}^{-1} (y), \qquad x \in \Omega'_i \cap \Omega,
\end{equation}
\begin{equation}
    \label{eq:r7.e4}
    v(y) = w ( {\cal {T'}}_{(i)}^{-1} (y)),
\end{equation}
and changing the variables in \eqref{eq:r7.e1}, the function $v$
belongs to $H^2_{\Gamma_1}(B_{1}^+)$ and satisfies
\begin{equation}
    \label{eq:r7.e5}
    a'_+(v, \psi)=0, \qquad \hbox{for every } \psi \in
    H^2_0(B_1^+),
\end{equation}
where
\begin{multline}
    \label{eq:r7.e6}
    a'_+(v, \psi)=
    \\
    =\int_{B_{1}^+}
    \left (
    \mathbb {Q'} \nabla^2v \cdot
    \nabla^2 \psi + \mathbb {A'} \nabla^2v \cdot
    \nabla \psi + \mathbb {B'} \nabla v \cdot
    \nabla^2 \psi + \mathbb {D'} \nabla v \cdot
    \nabla \psi
    \right )
    \left | \det \frac{\partial {\cal {T'}}_{(i)}   }{\partial x} \right
    |^{-1} dy
\end{multline}
and the tensors $\mathbb {Q'} $, $\mathbb {A'}$, $\mathbb {B'}$,
$\mathbb {D'}$ and the function $\psi$ are defined as in
\eqref{eq:r5.e1}-\eqref{eq:r5.e5} and \eqref{eq:r5.e6},
respectively, with ${\cal {T}}_{(j)}$ replaced by ${\cal
{T'}}_{(i)}$. We note that the tensor $\mathbb {Q'} $ satisfies
the conditions \eqref{eq:r5.e8}, \eqref{eq:r5.e9} and all the
tensors $\mathbb {Q'} $, $\mathbb {A'}$, $\mathbb {B'}$, $\mathbb
{D'}$ belong to $C^{0,1}(\overline{B_1^+})$.

To this point we make use of the following regularity result up to
the boundary $\partial D$ for the solution $v$ to the problem
\eqref{eq:r7.e5}.
\begin{theo}[Boundary regularity for homogeneous Dirichlet conditions]
  \label{theo:BoundDirHom}
Let $v \in H_{\Gamma_1}^2(B_1^+)$ defined by \eqref{eq:r7.e4} be
the solution to \eqref{eq:r7.e5}, where the tensors $\mathbb {Q'}
$, $\mathbb {A'}$, $\mathbb {B'}$, $\mathbb {D'}$ are defined as
above. Then, $v\in H^3\left (B_{\frac{1}{2}}^+\right )$ and we
have
\begin{equation}
    \label{eq:r8bis.e1}
    \|v\|_{H^3\left (B_{\frac{1}{2}}^+\right )} \leq
    C
    \|v\|_{H^2 (B_1^+)},
\end{equation}
where $C>0$ only depends on $M_0$,
$\|\mathbb{P}\|_{C^{0,1}(\overline{\Omega})}$ and $\gamma$.
\end{theo}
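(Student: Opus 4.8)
The plan is to prove an $H^3$ interior-boundary regularity estimate for the weak solution $v$ of the variational problem \eqref{eq:r7.e5}, where $v$ vanishes together with its normal derivative on the flat portion $\Gamma_1$ of $\partial B_1^+$ and no boundary condition is imposed on the curved part $\Gamma_1^+$ (the smallness is encoded in the fact that $v\in H^2(B_1^+)$ only). The leading-order operator is the fourth-order strongly convex form $\int \mathbb{Q}\nabla^2 v\cdot\nabla^2\psi$, and the terms with $\mathbb{A'},\mathbb{B'},\mathbb{D'}$ are lower order with $C^{0,1}$ coefficients, so they will be absorbed as right-hand sides. First I would pick cutoffs $\eta_1\prec\eta_2$ with $\eta_2\equiv 1$ on $B_{1/2}^+$ and $\mathrm{supp}\,\eta_2\subset B_{3/4}^+$, and work on a slightly smaller hemidisk, reducing everything to obtaining a bound for $\|\nabla^3(\eta_1 v)\|_{L^2}$ in terms of $\|v\|_{H^2(B_1^+)}$.

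The core of the argument is the \emph{tangential difference quotient} method adapted to the fourth-order operator. For the tangential direction $e_1$ (parallel to $\Gamma_1$) and small $t$, set $D_t^h w(y)=\frac{w(y+te_1)-w(y)}{t}$. Because the Dirichlet data $v=0,\ \partial_n v=0$ on $\Gamma_1$ are translation-invariant in $y_1$, the test function $\psi=D_{-t}^h(\eta_1^2 D_t^h v)$ is admissible in $H^2_0$ of the relevant subdomain (it still vanishes to first order on $\Gamma_1$), and inserting it into \eqref{eq:r7.e5} together with the strong convexity \eqref{eq:r5.e9}, the Lipschitz bound on the coefficients, Young's inequality and the standard commutator estimates for difference quotients yields a uniform-in-$t$ bound $\|\nabla^2 D_t^h v\|_{L^2(B_{1/2}^+)}\le C\|v\|_{H^2(B_1^+)}$. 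Letting $t\to 0$ gives $\partial_1\nabla^2 v\in L^2(B_{1/2}^+)$, i.e. all third derivatives of $v$ involving at least one tangential derivative. It remains to recover the purely normal third derivative $\partial_2^3 v$: here one uses the PDE in strong form, which on $B_{1/2}^+$ reads (schematically) $Q_{2222}\partial_2^4 v = -(\text{terms with at most three }\partial_2\text{-derivatives and tangential derivatives, plus lower-order terms})$; since $Q_{2222}\ge\gamma_0>0$ by \eqref{eq:r5.e9}, one solves algebraically for $\partial_2^4 v$, but for the $H^3$ bound it suffices to differentiate the equation once tangentially, or equivalently to note $\partial_2^3 v=\partial_2^{-1}(\text{already-controlled quantities})$ after one integration in $y_2$ from $\Gamma_1$, using the homogeneous Dirichlet data to kill the boundary term. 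This closes the estimate $\|v\|_{H^3(B_{1/2}^+)}\le C\|v\|_{H^2(B_1^+)}$ with $C$ depending only on $\gamma$, $M_0$ and $\|\mathbb{P}\|_{C^{0,1}(\overline\Omega)}$.

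The main obstacle is the fourth-order nature combined with the fact that only the Dirichlet (``geometric'') boundary conditions are prescribed on $\Gamma_1$, while on $\Gamma_1^+$ there is effectively nothing: one must be careful that the difference-quotient test function genuinely lies in $H^2_0$ of the correct subdomain, which forces the use of a cutoff vanishing near $\Gamma_1^+$ and hence a two-step (interior plus up-to-$\Gamma_1$) localization, and one must handle the non-divergence-form lower-order tensors $\mathbb{A'},\mathbb{B'}$ (first derivatives paired against second derivatives) which, although lower order, still need the $C^{0,1}$ regularity of $\mathbb{P}$ to be moved across the difference quotient without losing a derivative. A secondary technical point is the recovery of the missing pure normal derivative $\partial_2^3 v$ from the equation: this requires that the coefficient of the top normal derivative be bounded below, which is exactly guaranteed by \eqref{eq:r5.e9}. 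Once these points are handled the proof follows the classical Agmon–Douglis–Nirenberg / Nirenberg scheme; the quantitative dependence of the constants on the a priori data is tracked throughout as in the analogous result of \cite{M-R-V1}.
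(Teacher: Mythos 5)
Your overall strategy---tangential difference quotients to control all third derivatives with at least one tangential component, followed by using the PDE and ellipticity to recover the pure normal third derivative---is the same as the paper's (the paper explicitly says the proof of Theorem \ref{theo:BoundDirHom} follows the same guidelines as Theorem \ref{theo:BoundNeuNonhom}, proved in the Appendix). The first step is correct: $v=0$, $\partial_n v=0$ on $\Gamma_1$ are translation-invariant in $y_1$, your cutoff kills support near $\Gamma_1^+$, so the double difference quotient test function is indeed admissible in $H^2_0(B_1^+)$, and coercivity from \eqref{eq:r5.e9} together with the Lipschitz bound on the coefficients yields the uniform estimate $\|\nabla^2 D^h_t v\|_{L^2(B_{\rho}^+)}\leq C\|v\|_{H^2(B_1^+)}$.

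The gap is in your recovery of $\partial_2^3 v$. The two mechanisms you sketch do not actually close. Neither "solving algebraically for $\partial_2^4 v$" nor "integrating once in $y_2$ from $\Gamma_1$" is available: the strong form of the PDE only makes sense once you already know $v\in H^4_{\mathrm{loc}}$, and the integration from $\Gamma_1$ requires a boundary value for $\partial_2^3 v$ (or at least $\partial_2^2 v$) on $\Gamma_1$, which the homogeneous Dirichlet conditions $v=0$, $\partial_2 v=0$ and their tangential derivatives do \emph{not} provide. The Dirichlet data determine $\partial_1^k v$ and $\partial_1^k\partial_2 v$ on $\Gamma_1$, but leave $\partial_2^2 v$ and $\partial_2^3 v$ free there, so there is no boundary term to "kill." What the paper does instead (following \cite[Lemma 9.3]{l:agmon}, restated as Lemma \ref{lem:agmon}) is set $g=\sum_{\gamma,\delta}Q_{22\gamma\delta}\,v_{,\gamma\delta}$, note that $\partial_1 g\in L^2$ is already controlled from step one, and then use the weak formulation \eqref{eq:r7.e5} with test functions $\psi\in C_0^\infty(B_{\sigma_0}^+)$---together with the strong form only in the tangential variables, which are by now admissible---to verify the duality hypothesis $\left|\int_{B_{\sigma_0}^+} g\,\psi_{,22}\right|\leq K\|\psi\|_{H^1}$; Lemma \ref{lem:agmon} then yields $g\in H^1$, and since $Q_{2222}\geq \gamma_0>0$ one solves $v_{,22}=Q_{2222}^{-1}\bigl(g-\sum_{(\gamma,\delta)\neq(2,2)}Q_{22\gamma\delta}v_{,\gamma\delta}\bigr)\in H^1$, giving $v_{,222}\in L^2$. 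This is a distributional translation-lemma argument, not an integration in $y_2$, and it is the step you would need to repair.
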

The proof of Theorem \ref{theo:BoundDirHom} follows the same
guidelines of the proof of Theorem \ref{theo:BoundNeuNonhom}, see
also \cite{l:agmon} for details.

By applying the homeomorphism ${\cal {T'}}_{(i)}$ to
\eqref{eq:r8bis.e1} we have
\begin{equation}
    \label{eq:r8.e2}
    \|w\|_{H^3 \left({\cal{T'}}_{(i)}^{-1}\left (B_{\frac{1}{2}}^+\right )\right )} \leq C
    \|w\|_{ H^2 (\Omega'_i \cap \Omega)} ,
\end{equation}
where the constant $C>0$ only depends on $M_0$, $\gamma$ and
$\|\mathbb P\|_{C^{0,1}( \overline{\Omega})}$.

Finally, estimate \eqref{eq:reg_H3} follows {}from
\eqref{eq:r2.e4}, \eqref{eq:r3.e1}, \eqref{eq:r6.e2},
\eqref{eq:r8.e2} and {}from the estimate \eqref{eq:w_stima_H2}.
\end{proof}

The proof of Lemma \ref{lem:H^4} follows {}from the following
local version of the $H^4$-regularity near a boundary with
homogeneous Dirichlet data for the solution to the problem
\eqref{eq:r7.e5}.
\begin{theo}
  \label{theo:BoundDirHom-H4}
Let $v \in H_{\Gamma_1}^2(B_1^+)$ defined by \eqref{eq:r7.e4} be
the solution to \eqref{eq:r7.e5}. Then, $v\in
H^4(B_{\frac{1}{2}}^+)$ and we have
\begin{equation}
    \label{eq:r25.e1}
    \|v\|_{H^4\left (B_{\frac{1}{2}}^+\right )} \leq
    C
    \|v\|_{H^2(B_1^+)},
\end{equation}
where $C>0$ only depends on $M_0$,
$\|\mathbb{P}\|_{C^{1,1}(\overline{\Omega})}$ and $\gamma$.
\end{theo}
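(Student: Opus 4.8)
The plan is to obtain the $H^4$ estimate from the $H^3$ estimate of Theorem~\ref{theo:BoundDirHom} by one further tangential differentiation — now legitimate thanks to the $C^{1,1}$ regularity of the coefficients — and then to recover the purely normal fourth derivative algebraically from the equation. As a preliminary reduction I would use the $C^{3,1}$ regularity of $\partial D$ (assumed in Lemma~\ref{lem:H^4}) to choose the flattening diffeomorphism ${\cal T}'_{(i)}$ in \eqref{eq:r7.e5} of class $C^{3,1}$ with Jacobian identically one; then $S\in C^{2,1}$, $R\in C^{1,1}$, and by \eqref{eq:r5.e1}--\eqref{eq:r5.e5} together with $\mathbb{P}\in C^{1,1}$ all four tensors $\mathbb{Q}'$, $\mathbb{A}'$, $\mathbb{B}'$, $\mathbb{D}'$ belong to $C^{1,1}(\overline{B_1^+})$ with norms controlled by $M_0$ and $\|\mathbb{P}\|_{C^{1,1}(\overline\Omega)}$, while $\mathbb{Q}'$ still satisfies \eqref{eq:r5.e8} and \eqref{eq:r5.e9}. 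Since Theorem~\ref{theo:BoundDirHom} is local, it holds with $B_{1/2}^+$ replaced by $B_\theta^+$ for every $\theta<1$, so I may start from $v\in H^3(B_\theta^+)$, $\|v\|_{H^3(B_\theta^+)}\le C_\theta\|v\|_{H^2(B_1^+)}$, with $v$ and $\nabla v$ vanishing on $\Gamma_\theta$.

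\emph{Step 1 (tangential regularity).} I would prove $\partial_1 v\in H^3(B_{1/2}^+)$ by a difference--quotient argument in the direction $e_1$ tangent to $\Gamma_1$. For $|h|$ small put $v^h=\tau_{h,1}v$; since $\Gamma_1$ is invariant under tangential translations and $v$, $\nabla v$ vanish on it, so do $v^h$ and $\nabla v^h$, and testing \eqref{eq:r7.e5} with $\tau_{-h,1}(\xi^2 v^h)$ for a cut--off $\xi\in C_c^\infty(B_{3/4})$, $\xi\equiv1$ on $B_{1/2}$, together with the discrete Leibniz rule, yields for $v^h$ a variational problem of the same form, with coefficients $\mathbb{Q}'(\,\cdot+he_1),\dots$ (satisfying \eqref{eq:r5.e8}--\eqref{eq:r5.e9} uniformly in $h$) and a right--hand side assembled from the commutator terms $(\tau_{h,1}\mathbb{Q}')\nabla^2 v$, $(\tau_{h,1}\mathbb{A}')\nabla^2 v$, $(\tau_{h,1}\mathbb{B}')\nabla v$, $(\tau_{h,1}\mathbb{D}')\nabla v$ tested against $\nabla^2\psi$ or $\nabla\psi$. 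Because $\mathbb{Q}',\mathbb{A}',\mathbb{B}',\mathbb{D}'\in C^{1,1}$ their difference quotients are bounded in $C^{0,1}$ uniformly in $h$, and because $v\in H^3$ each of these products is bounded in $H^1$ uniformly in $h$; hence the right--hand side is a forcing for which the $H^3$ a priori estimate is available. Invoking that estimate — the one underlying Theorem~\ref{theo:BoundDirHom}, in its (same--proof) version with an inhomogeneous right--hand side — gives that the $H^3(B_{1/2}^+)$ norms of $v^h$ are bounded uniformly for $|h|$ small, so $\partial_1 v\in H^3(B_{1/2}^+)$. In particular every fourth derivative $\partial^\alpha v$, $|\alpha|=4$, $\alpha_1\ge1$, belongs to $L^2(B_{1/2}^+)$.

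\emph{Step 2 (normal derivative from the equation).} Integrating by parts in \eqref{eq:r7.e5} against $\psi\in C_c^\infty(B_1^+)$ gives, as an identity of distributions in $B_1^+$,
\begin{equation*}
\partial_m\partial_n(Q'_{klmn}\partial_k\partial_l v)-\partial_n(A'_{nkl}\partial_k\partial_l v)+\partial_m\partial_n(B'_{kmn}\partial_k v)-\partial_n(D'_{kn}\partial_k v)=0 .
\end{equation*}
Expanding all derivatives and isolating the term $Q'_{2222}\,\partial_2^4 v$: every other genuinely fourth--order term of the first summand carries a factor $\partial_1$, hence lies in $L^2(B_{1/2}^+)$ by Step~1; all the remaining terms involve at most three derivatives of $v$ times the coefficients or up to two of their derivatives, hence lie in $L^2(B_{1/2}^+)$ because $v\in H^3(B_{1/2}^+)$ and $\mathbb{Q}',\mathbb{A}',\mathbb{B}',\mathbb{D}'\in C^{1,1}$ (the term $(\partial_m\partial_n B'_{kmn})\partial_k v$ is exactly where $\mathbb{B}'\in C^{1,1}$, i.e.\ the $C^{3,1}$ regularity of $\partial D$, enters). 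Thus $Q'_{2222}\partial_2^4 v=R$ a.e.\ in $B_{1/2}^+$ with $R\in L^2(B_{1/2}^+)$ and $\|R\|_{L^2(B_{1/2}^+)}\le C\|v\|_{H^2(B_1^+)}$. Applying \eqref{eq:r5.e9} to the symmetric matrix $A=e_2\otimes e_2$ gives $Q'_{2222}=\mathbb{Q}'(e_2\otimes e_2)\cdot(e_2\otimes e_2)\ge\gamma_0>0$, so $1/Q'_{2222}\in L^\infty(B_{1/2}^+)$ and $\partial_2^4 v=R/Q'_{2222}\in L^2(B_{1/2}^+)$. Together with Step~1 this shows $v\in H^4(B_{1/2}^+)$, and chaining the estimates of the two steps yields \eqref{eq:r25.e1}.

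\emph{Expected main obstacle.} The delicate part is Step~1: checking that a single tangential difference quotient of the \emph{mixed} lower--order terms $\mathbb{A}'\nabla^2 v\cdot\nabla\psi$ and $\mathbb{B}'\nabla v\cdot\nabla^2\psi$ in \eqref{eq:r7.e6} produces a forcing in the correct negative Sobolev space, uniformly in $h$ — which is precisely what forces the use of the $C^{1,1}$ (rather than merely $C^{0,1}$) regularity of the transformed tensors, hence of the sharper $C^{3,1}$ regularity of $\partial D$ — while the homogeneous Dirichlet conditions on $\Gamma_1$ and the coercivity from \eqref{eq:r5.e9} are preserved so that the $H^3$ a priori estimate can be applied. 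Once Step~1 is in place, the recovery of $\partial_2^4 v$ in Step~2 is essentially algebraic.
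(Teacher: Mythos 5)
Your proposal is correct and follows essentially the same route as the paper: upgrade the $H^3$ estimate of Theorem~\ref{theo:BoundDirHom} by one more tangential difference--quotient step (legitimate once the transformed tensors $\mathbb{Q}',\mathbb{A}',\mathbb{B}',\mathbb{D}'$ are in $C^{1,1}$, which is exactly where $\|\mathbb{P}\|_{C^{1,1}}$ and the $C^{3,1}$ regularity of $\partial D$ enter), and then recover the purely normal fourth derivative from the equation by ellipticity of $Q'_{2222}$. The paper organizes the first step slightly differently — it writes the inhomogeneous variational identity \eqref{eq:r25.e2}--\eqref{eq:r26.e2} for $v_{,\alpha}$, bounds $l'_{\alpha+}$ via \eqref{eq:r26.e3}, and then reruns the difference--quotient argument of Theorem~\ref{theo:BoundNeuNonhom}/\ref{theo:BoundDirHom} on that problem to get \eqref{eq:r27.e1}, while the closure in the normal direction \eqref{eq:r27.e2} is obtained by adapting the Agmon--type Lemma~\ref{lem:agmon} as in \eqref{eq:r23.e4} — whereas you apply the difference quotient directly to $v$ and then read off $\partial_2^4 v$ from the strong form. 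These are two dressings of the same argument. One small thing you make explicit that the paper leaves tacit, and which is worth keeping: the flattening homeomorphism used here must be taken of class $C^{3,1}$ (not just the $C^{2,1}$ version built in the proof of Lemma~\ref{lem:H^3}) for $\mathbb{B}'$, $\mathbb{D}'$ to be $C^{1,1}$; that reduction is harmless since $\partial D\in C^{3,1}$ by \eqref{eq:reg_D}.
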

\begin{proof}
By Theorem \ref{theo:BoundDirHom}, the function $v \in
H^3(B_{\frac{1}{2}}^+)$ satisfies the estimate
\eqref{eq:r8bis.e1}, e.g. $\|v\|_{H^3\left
(B_{\frac{1}{2}}^+\right )} \leq C \|v\|_{H^2(B_1^+)}$, where
$C>0$ only depends on $M_0$,
$\|\mathbb{P}\|_{C^{0,1}(\overline{\Omega})}$ and $\gamma$.

In the first step of the proof we estimate the tangential
derivative $ \frac{\partial \nabla^3 v}{\partial y_1}$. By the
weak formulation of the problem \eqref{eq:r7.e5} it can be shown
that the function $v_{,\alpha} = \frac{\partial v}{\partial
y_{\alpha}} \in H^2\left (B_{\frac{1}{2}}^+\right )$,
$\alpha=1,2$, satisfies the following equation
\begin{equation}
    \label{eq:r25.e2}
    a'_+(v_{,\alpha}, \psi)=l'_{\alpha +}(\psi), \qquad \hbox{for every } \psi \in
    H^2_0\left (B_{\frac{1}{2}}^+\right ),
\end{equation}
where
\begin{multline}
    \label{eq:r26.e1}
    a'_+(v_{,\alpha}, \psi)
    =\int_{B_{\frac{1}{2}}^+}
    \left (
    \mathbb {Q'} \nabla^2 v_{,\alpha} \cdot
    \nabla^2 \psi + \mathbb {A'} \nabla^2 v_{,\alpha} \cdot
    \nabla \psi + \right . \\
    \left . + \mathbb {B'} \nabla v_{,\alpha} \cdot
    \nabla^2 \psi + \mathbb {D'} \nabla v_{,\alpha} \cdot
    \nabla \psi
    \right )
    \left | \det \frac{\partial {\cal {T'}}_{(i)}   }{\partial x} \right
    |^{-1} dy
\end{multline}
\begin{multline}
    \label{eq:r26.e2}
    l'_{\alpha +}(\psi)
    =\int_{B_{\frac{1}{2}}^+}
    \left (
    \divrg(\mathbb {Q}'_{,\alpha} \nabla^2 v) \cdot
    \nabla^2 \psi - \mathbb {A'}_{,\alpha} \nabla^2 v \cdot
    \nabla \psi + \right. \\
    \left. +(B'_{kmn,\alpha}v_{,n})_{,k} \psi_{,m} - \mathbb {D'}_{,\alpha} \nabla v \cdot
    \nabla \psi
    \right )
    \left | \det \frac{\partial {\cal {T'}}_{(i)}   }{\partial x} \right
    |^{-1} dy.
\end{multline}
By the regularity assumptions on $\mathbb P$ and by
\eqref{eq:r8bis.e1} we have
\begin{equation}
    \label{eq:r26.e3}
    |l'_{\alpha +}(\psi)|
    \leq C \| v\|_{H^2 (B_1^+)} \| \psi\|_{L^2\left (B_{\frac{1}{2}}^+\right )},
\end{equation}
$\alpha=1,2$, where the constant $C>0$ only depends on $M_0$,
$\|\mathbb{P}\|_{C^{1,1}(\overline{\Omega})}$ and $\gamma$.

To this point, the arguments used in the proof of
\eqref{eq:r21.e1} for the Neumann case can be adapted to the
Dirichlet boundary condition case to obtain
\begin{equation}
    \label{eq:r27.e1}
    \| \frac{\partial}{\partial y_1}  \nabla^2 v_{,\alpha}\|_{L^2\left (B_{\frac{1}{2}}^+\right )}
    \leq C
    \|v\|_{H^2(B_1^+)},
\end{equation}
$\alpha=1,2$, where the constant $C>0$ only depends on $M_0$,
$\|\mathbb{P}\|_{C^{1,1}(\overline{\Omega})}$ and $\gamma$.

The estimate of the normal derivative $\frac{\partial}{\partial
y_2}  \nabla^2 v_{,\alpha}$, $\alpha=1,2$, can be obtained by
adapting the proof of \eqref{eq:r23.e4}. We obtain
\begin{equation}
    \label{eq:r27.e2}
    \| \frac{\partial}{\partial y_2}  \nabla^2 v_{,\alpha}\|_{L^2\left (B_{\frac{1}{2}}^+\right )}
    \leq C
    \|v\|_{H^2(B_1^+)},
\end{equation}
$\alpha=1,2$, where the contant $C>0$ only depends on $M_0$,
$\|\mathbb{P}\|_{C^{1,1}(\overline{\Omega})}$ and $\gamma$. By
\eqref{eq:r27.e1} and \eqref{eq:r27.e2} we have the thesis.
\end{proof}

\section{Appendix}

In this Appendix we prove Theorem \ref{theo:BoundNeuNonhom}.

\begin{proof} [Proof of Theorem \ref{theo:BoundNeuNonhom}]
As a first step of the proof, let us estimate the partial
derivative $ \frac{\partial}{\partial y_1} \nabla^2 u$ in the
direction parallel to the flat boundary $\Gamma_1$ of $B_1^+$.

Let $\vartheta \in C_0^{\infty}(\R^2)$ be a function such that $0
\leq \vartheta(y) \leq 1$ for every $y \in \R^2$, $\vartheta
\equiv 1$ in $B_{\rho}$, $\vartheta \equiv 0 $ in $\R^2 \setminus
B_{\sigma_0}$ and $|\nabla^k \vartheta| \leq C$, $k=1,...,4$,
where $\rho= \frac{3}{4}$, $\sigma_0=
\frac{1}{2}(\rho+1)=\frac{7}{8}$ and $C$ is an absolute constant.

For every function $\psi \in H_{\Gamma_1^+}^2(B_{1}^+)$, we still
denote by $\psi \in H^2(\R^2_+)$ its extension to $\R^2_+$
obtained by taking $\psi \equiv 0$ in $\R^2_+ \setminus B_{1}^+$.

Let $s$ be a real number different from zero. The difference
operator in the $y_1$-direction is defined by
\begin{equation}
    \label{eq:r10.e1}
    (\tau_{1,s}f)(y) = \frac{f(y+se_1)-f(y)}{s},
\end{equation}
for any function $f$. In what follows we shall assume that $|s|
\leq \frac{1}{16}$. Let us notice that if $u \in H^2(B_1^+)$, then
$\tau_{1,s} (\vartheta u) \in H^2_{\Gamma_1^+}(B_1^+)$.

We evaluate the bilinear form $a_+(\cdot, \psi)$ with $u$ replaced
by $\tau_{1,s}(\vartheta u)$ and $\psi \in
H^2_{\Gamma_1^+}(B_1^+)$. Since
\begin{equation}
    \label{eq:r11.e1}
    \nabla^k (\tau_{1,s} (\vartheta u)) = \tau_{1,s} ( \nabla^k (\vartheta
    u)), \quad \hbox{in } B_1^+, \ k=1,2,
\end{equation}
we have
\begin{multline}
    \label{eq:r11.e2}
    a_+(\tau_{1,s}(\vartheta u), \psi)= \int_{B_{1}^+} \left ( \mathbb Q \tau_{1,s}(\nabla^2(\vartheta
    u))\cdot\nabla^2\psi + \mathbb A \tau_{1,s}(\nabla^2(\vartheta
    u)) \cdot \nabla \psi + \right. \\
    \left. + \mathbb B \tau_{1,s}(\nabla(\vartheta u)) \cdot
    \nabla^2 \psi + \mathbb D \tau_{1,s}(\nabla (\vartheta
    u))\cdot \nabla \psi \right ) dy,
\end{multline}
where we have taken into account that $ \det  \frac{\partial
{\cal{T}}_{(j)}}{\partial x} =1$.

Let us consider the leading term of \eqref{eq:r11.e2}. We
elaborate the corresponding expression by moving the difference
operator {}from $\vartheta u$ to $\vartheta \psi$. We have
\begin{multline}
    \label{eq:r12.e1}
    \int_{B_{1}^+} \mathbb Q \tau_{1,s}(\nabla^2(\vartheta
    u))\cdot\nabla^2\psi = \int_{B_{1}^+} \mathbb Q  \tau_{1,s} ( \vartheta \nabla^2 u)\cdot\nabla^2\psi
    +\\
    + \int_{B_{1}^+} \mathbb Q  \tau_{1,s} (\nabla \vartheta \otimes \nabla u + \nabla u \otimes \nabla
    \vartheta) \cdot \nabla^2 \psi
    + \int_{B_{1}^+} \mathbb Q  \tau_{1,s}(u\nabla^2 \vartheta)\cdot \nabla^2
    \psi \equiv I_1+I_2+I_3.
\end{multline}
The last two terms on the right hand side of \eqref{eq:r12.e1} can
be estimated as follows
\begin{equation}
    \label{eq:r12.e2}
    |I_2| \leq C \|u\|_{H^2(B_{1}^+)}\|\nabla^2 \psi\|_{L^2(B_{1}^+)},
\end{equation}
\begin{equation}
    \label{eq:r12.e3}
    |I_3| \leq C \|u\|_{H^1(B_{1}^+)}\|\nabla^2 \psi\|_{L^2(B_{1}^+)},
\end{equation}
where the constant $C>0$ only depends on $\| \mathbb
P\|_{L^\infty( {B_{1}^+})}$ and $M_0$.

The term $I_1$ can be written as
\begin{multline}
    \label{eq:r12.e4}
    I_1 = \int_{B_{1}^+} \tau_{1,s} ( \mathbb Q   ( \vartheta \nabla^2 u)) \cdot \nabla^2\psi - \\
    - \int_{B_{1}^+} (\tau_{1,s} \mathbb Q)   ( \vartheta \nabla^2 u)(y+se_{1}) \cdot \nabla^2\psi \equiv I_1'
    + I_1'',
\end{multline}
where
\begin{equation}
    \label{eq:r12.e5}
    |I_1''| \leq C \|\nabla^2 u\|_{L^2(B_{1}^+)} \|\nabla^2 \psi\|_{L^2(B_{1}^+)},
\end{equation}
with $C>0$ constant only depending on $\| \mathbb P\|_{C^{0,1}(
\overline{B_{1}^+})}$ and $M_0$.

The remaining term $I_1'$ can be elaborated as follows
\begin{multline}
    \label{eq:r13.e1}
    I_1' = \int_{B_{1}^+} \tau_{1,s} ( \mathbb Q   ( \vartheta \nabla^2 u)) \cdot \nabla^2\psi =
    - \int_{B_{1}^+} \mathbb Q   ( \vartheta \nabla^2 u) \cdot (\tau_{1,-s} \nabla^2\psi) = \\
    = - \int_{B_{1}^+} \mathbb Q   \nabla^2 u \cdot ( \vartheta \tau_{1,-s} \nabla^2\psi) =
    - \int_{B_{1}^+} \mathbb Q   (\nabla^2 u) \cdot \nabla^2( \vartheta \tau_{1,-s} \psi) + \\
    + \int_{B_{1}^+} \mathbb Q \nabla^2 u \cdot \left ( (\nabla^2 \vartheta) \tau_{1,-s} \psi +
    \nabla \vartheta \otimes \nabla(\tau_{1,-s} \psi) + \nabla(\tau_{1,-s} \psi)\otimes \nabla \vartheta
    \right ).
\end{multline}
The last two terms of \eqref{eq:r13.e1} can be estimated as
follows:
\begin{equation}
    \label{eq:r13.e1bis}
    \left | \int_{B_{1}^+} \mathbb Q \nabla^2 u \cdot ((\nabla^2 \vartheta) \tau_{1,-s} \psi )
    \right | \leq C \|\nabla^2 u\|_{L^2(B_{1}^+)} \| \nabla
    \psi\|_{L^2(B_{1}^+)},
\end{equation}
\begin{multline}
    \label{eq:r13.e2}
    \left | \int_{B_{1}^+} \mathbb Q \nabla^2 u \cdot
    \left ( \nabla \vartheta \otimes \nabla ( \tau_{1,-s} \psi) + \nabla(\tau_{1,-s} \psi) \otimes  \nabla \vartheta
     \right  )
    \right | \leq \\
    \leq C \|\nabla^2 u\|_{L^2(B_{1}^+)} \| \nabla^2 \psi\|_{L^2(B_{1}^+)},
\end{multline}
where the constant $C>0$ only depends on $\| \mathbb
P\|_{L^\infty( {B_{1}^+})}$ and $M_0$.

Therefore, by \eqref{eq:r12.e2}, \eqref{eq:r12.e3},
\eqref{eq:r12.e5}, \eqref{eq:r13.e1}, \eqref{eq:r13.e1bis} and
\eqref{eq:r13.e2}, the left hand side of  \eqref{eq:r12.e1} can be
written as
\begin{equation}
    \label{eq:r14.e1}
    \int_{B_{1}^+} \mathbb Q \tau_{1,s}(\nabla^2(\vartheta
    u))\cdot\nabla^2\psi =
    - \int_{B_{1}^+} \mathbb Q   \nabla^2 u \cdot \nabla^2( \vartheta \tau_{1,-s} \psi)
    +r_{\mathbb Q},
\end{equation}
where, by the Poincar\`{e} inequality on
$H_{\Gamma_1^+}^2(B_1^+)$, we have
\begin{equation}
    \label{eq:r14.e2}
    |r_{\mathbb Q}| \leq C \|u\|_{H^2(B_{1}^+)} \| \nabla^2
    \psi\|_{L^2(B_{1}^+)},
\end{equation}
where the constant $C>0$ only depends on $\| \mathbb P\|_{C^{0,1}(
\overline{B_{1}^+})}$ and $M_0$.

We can estimate the remaining terms appearing on the right hand
side of \eqref{eq:r11.e2} similarly. Concerning the term involving
the tensor $\mathbb A$, for instance, we have
\begin{multline}
    \label{eq:r14.e3}
    \int_{B_{1}^+} \mathbb A \tau_{1,s}(\nabla^2(\vartheta
    u))\cdot\nabla\psi = \int_{B_{1}^+} \mathbb A  \tau_{1,s} ( \vartheta \nabla^2 u)\cdot\nabla\psi
    +\\
    + \int_{B_{1}^+} \mathbb A  \tau_{1,s} (\nabla \vartheta \otimes \nabla u + \nabla u \otimes \nabla
    \vartheta) \cdot \nabla \psi
    + \int_{B_{1}^+} \mathbb A  \tau_{1,s}(u\nabla^2 \vartheta)\cdot \nabla
    \psi \equiv J_1+J_2+J_3,
\end{multline}
where, by the Poincar\`{e} inequality on $H_{\Gamma_1^+}^2(B_1^+)$
we have
\begin{equation}
    \label{eq:r15.e1}
    |J_2+J_3| \leq C \|u\|_{H^2(B_{1}^+)}\|\nabla \psi\|_{L^2(B_{1}^+)},
\end{equation}
where the constant $C>0$ only depends on $\| \mathbb
P\|_{L^\infty( {B_{1}^+})}$ and $M_0$. The term $J_1$ can be
elaborated as it was done for $I_1$, obtaining
\begin{multline}
    \label{eq:r15.e2}
    J_1 = \int_{B_{1}^+} \tau_{1,s} ( \mathbb A   ( \vartheta \nabla^2 u)) \cdot \nabla\psi - \\
    - \int_{B_{1}^+} (\tau_{1,s} \mathbb A)   ( \vartheta \nabla^2 u)(y+se_{1}) \cdot \nabla\psi \equiv J_1'
    + J_1'',
\end{multline}
where
\begin{equation}
    \label{eq:r15.e3}
    |J_1''| \leq C \|\nabla^2 u\|_{L^2(B_{1}^+)} \|\nabla \psi\|_{L^2(B_{1}^+)},
\end{equation}
with $C>0$ constant only depending on $\| \mathbb P\|_{C^{0,1}(
\overline{B_{1}^+})}$ and $M_0$. Concerning the term $J_1'$ we
have
\begin{multline}
    \label{eq:r16.e1}
    J_1' = \int_{B_{1}^+} \tau_{1,s} ( \mathbb A   ( \vartheta \nabla^2 u)) \cdot \nabla\psi
    = - \int_{B_{1}^+} \mathbb A   \nabla^2 u \cdot ( \vartheta \tau_{1,-s} \nabla\psi)
    =\\
    = - \int_{B_{1}^+} \mathbb A   (\nabla^2 u) \cdot \nabla( \vartheta \tau_{1,-s} \psi)
    + \int_{B_{1}^+} \mathbb A \nabla^2 u \cdot \nabla \vartheta (\tau_{1,-s}
    \psi),
\end{multline}
where the last term of \eqref{eq:r16.e1} can be estimated as
\begin{equation}
    \label{eq:r16.e2}
    \left | \int_{B_{1}^+} \mathbb A \nabla^2 u \cdot \nabla \vartheta (\tau_{1,-s}
    \psi)   \right |
    \leq C
     \|\nabla^2 u\|_{L^2(B_{1}^+)}\|\nabla \psi\|_{L^2(B_{1}^+)},
\end{equation}
with  $C>0$ constant only depending on $\| \mathbb P\|_{L^\infty(
{B_{1}^+})}$ and $M_0$.

Therefore, by \eqref{eq:r15.e1}, \eqref{eq:r15.e3},
\eqref{eq:r16.e1} and \eqref{eq:r16.e2}, the left hand side of
\eqref{eq:r14.e3} can be written as
\begin{equation}
    \label{eq:r16.e3}
    \int_{B_{1}^+} \mathbb A \tau_{1,s}(\nabla^2(\vartheta
    u))\cdot\nabla\psi = - \int_{B_{1}^+} \mathbb A   (\nabla^2 u) \cdot \nabla( \vartheta \tau_{1,-s}
    \psi)+r_{\mathbb A},
\end{equation}
where
\begin{equation}
    \label{eq:r17.e1}
    |r_{\mathbb A}| \leq C \|u\|_{H^2(B_{1}^+)} \| \nabla
    \psi\|_{L^2(B_{1}^+)},
\end{equation}
with a constant $C>0$ only depending on $\| \mathbb P\|_{C^{0,1}(
\overline{B_{1}^+})}$ and $M_0$.

By similar procedure we obtain
\begin{equation}
    \label{eq:r17.e2}
    \int_{B_{1}^+} \mathbb B \tau_{1,s}(\nabla(\vartheta
    u))\cdot\nabla^2\psi = - \int_{B_{1}^+} \mathbb B   \nabla u \cdot \nabla^2( \vartheta \tau_{1,-s}
    \psi)+r_{\mathbb B},
\end{equation}
\begin{equation}
    \label{eq:r17.e3}
    \int_{B_{1}^+} \mathbb D \tau_{1,s}(\nabla(\vartheta
    u))\cdot\nabla\psi = - \int_{B_{1}^+} \mathbb D   \nabla u \cdot \nabla( \vartheta \tau_{1,-s}
    \psi)+r_{\mathbb D},
\end{equation}
where
\begin{equation}
    \label{eq:r17.e4}
    |r_{\mathbb B}| \leq C \|u\|_{H^2(B_{1}^+)} \| \nabla^2
    \psi\|_{L^2(B_{1}^+)},
\end{equation}
\begin{equation}
    \label{eq:r17.e5}
    |r_{\mathbb D}| \leq C \|u\|_{H^2(B_{1}^+)} \| \nabla
    \psi\|_{L^2(B_{1}^+)},
\end{equation}
where the constant $C>0$ only depends on $\| \mathbb P\|_{C^{0,1}(
\overline{B_{1}^+})}$ and $M_0$. Collecting the above results and
by using the Poincar\`{e} inequality on $H^2_{\Gamma_1^+}(B_1^+)$,
by \eqref{eq:r11.e2}, \eqref{eq:r14.e1}, \eqref{eq:r14.e2},
\eqref{eq:r16.e3}-\eqref{eq:r17.e5}, we have
\begin{equation}
    \label{eq:r18.e1}
    a_+(\tau_{1,s}(\vartheta u), \psi)= - a_+( u, \vartheta
    \tau_{1,-s}\psi)+ r_+,
\end{equation}
where
\begin{equation}
    \label{eq:r18.e2}
    |r_+| \leq C \|u\|_{H^2(B_{1}^+)} \| \nabla^2
    \psi\|_{L^2(B_{1}^+)},
\end{equation}
with $C>0$ constant only depending on $\| \mathbb P\|_{C^{0,1}(
\overline{B_{1}^+})}$ and $M_0$. Since $\psi \in
H^2_{\Gamma_1^+}(B_1^+)$, the function $\vartheta \tau_{1,-s}\psi
\in H^2_{\Gamma_1^+}(B_1^+)$ is a test function and then, by the
weak formulation of the problem \eqref{eq:r4.e4}, we have
\begin{multline}
    \label{eq:r18.e3}
    a_+(u, \vartheta \tau_{1,-s}\psi)=l_+(\vartheta
    \tau_{1,-s}\psi)= \\
    =-\int_{\Gamma_1}
    \left (
    \widehat{{\cal{M}}}_n \frac{\partial {\cal T}_{(j)}   }{\partial x}
    \left ( \frac{\partial {\cal T}_{(j)}   }{\partial x} \right
    )^T
    \nabla (\vartheta \tau_{1,-s}\psi) \cdot \nu
    \left | \left ( \frac{\partial {\cal T}_{(j)}   }{\partial x} \right
    )^{-T} n \right |  +  \right. \\
    \left. +  \widehat{ {\cal{M}}}_{\tau, \xi}
    \left | \frac{\partial {\cal T}_{(j)}   }{\partial x} \tau \right |
    (\vartheta \tau_{1,-s}\psi) \right ) \left | \frac{\partial {\cal T}_{(j)}   }{\partial x} \tau \right
    |^{-1} d\xi.
\end{multline}
By trace inequalities and Poincar\`{e} inequality we have
\begin{multline}
    \label{eq:r19.e1}
    |l_+(\vartheta \tau_{1,-s}\psi)| \leq \\
    \leq C \left (
    \| \widehat{{\cal{M}}}_n \|_{H^{\frac{1}{2}}(\Gamma_1)} \|\nabla(\vartheta
    \tau_{1,-s}\psi)\|_{H^{-\frac{1}{2}}(\Gamma_1)} + \| \widehat{ {\cal{M}}}_{\tau, \xi} \|_{H^{-\frac{1}{2}}(\Gamma_1)} \|\vartheta
    \tau_{1,-s}\psi\|_{H^{\frac{1}{2}}(\Gamma_1)} \right ) \leq \\
    \leq C \left (
    \| \widehat{{\cal{M}}}_n \|_{H^{\frac{1}{2}}(\Gamma_1)} \|\nabla(\vartheta
    \tau_{1,-s}\psi)\|_{L^2(B_1^+)} + \| \widehat{ {\cal{M}}}_{\tau} \|_{H^{\frac{1}{2}}(\Gamma_1)} \|\vartheta
    \tau_{1,-s}\psi\|_{H^{1}(B_1^+)} \right ) \leq \\
    \leq C  \| \widehat{{\cal{M}}} \|_{H^{\frac{1}{2}}(\Gamma_1)} \|\nabla^2\psi\|_{L^{2}(B_1^+)}
    ,
\end{multline}
where the constant $C>0$ only depends on $M_0$. By
\eqref{eq:r18.e1}--\eqref{eq:r19.e1} we have
\begin{equation}
    \label{eq:r19.e2}
    a_+(\tau_{1,s}(\vartheta u), \psi) \leq C_1
    \left (
    \| \widehat{{\cal{M}}} \|_{H^{\frac{1}{2}}(\Gamma_1)} +
    \|u\|_{H^2(B_{1}^+)}
    \right )
    \| \nabla^2
    \psi\|_{L^2(B_{1}^+)},
\end{equation}
for every $\psi \in H^2_{\Gamma_1^+}(B_1^+)$, where the constant
$C_1>0$ only depends on $\| \mathbb P\|_{C^{0,1}(
\overline{B_{1}^+})}$ and $M_0$.

Let $\psi \in H^2_{\Gamma_1^+}(B_1^+)$ and let us estimate {}from
below $a_+(\psi,\psi)$. For every $\epsilon>0$ and for every $\psi
\in H^2_{\Gamma_1^+}(B_1^+)$ we have
\begin{multline}
    \label{eq:r19.e3}
    \left |
    \int_{B_1^+} \mathbb A \nabla^2\psi\cdot\nabla\psi +  \mathbb B
    \nabla\psi\cdot\nabla^2\psi  +  \mathbb D \nabla\psi\cdot\nabla\psi \right | \leq \\
    \leq C
    \left (
    \epsilon \|\nabla^2 \psi\|_{L^2(B_1^+)}^2 + \left ( 1 +
    \frac{1}{\epsilon}\right ) \|\nabla \psi\|_{L^2(B_1^+)}^2
    \right ),
\end{multline}
where the constant $C>0$ only depends on $\| \mathbb P\|_{C^{0,1}(
\overline{B_{1}^+})}$ and $M_0$. Therefore, by the strong
convexity of $\mathbb Q$ and choosing $\epsilon$ small enough in
\eqref{eq:r19.e3} we have
\begin{equation}
    \label{eq:r20.e1}
    a_+(\psi,\psi) \geq C_2 \|\nabla^2 \psi\|_{L^2(B_1^+)}^2 - C_3\|\nabla
    \psi\|_{L^2(B_1^+)}^2,
\end{equation}
where $C_2>0$, $C_3>0$ are constants only depending on $\| \mathbb
P\|_{C^{0,1}( \overline{B_{1}^+})}$, $\gamma$ and $M_0$. Now, by
taking $\psi=\tau_{1,s}(\vartheta u)$ in \eqref{eq:r19.e2} and
\eqref{eq:r20.e1}, we obtain
\begin{multline}
    \label{eq:r20.e2}
    C_2\|\nabla^2 (\tau_{1,s}(\vartheta u))\|_{L^2(B_1^+)}^2
    \leq C_3 \|\nabla
    (\tau_{1,s}(\vartheta u))\|_{L^2(B_1^+)}^2 + \\
    + C_1
    \left (
    \| \widehat{{\cal{M}}} \|_{H^{\frac{1}{2}}(\Gamma_1)} +
    \|u\|_{H^2(B_{1}^+)}
    \right )
    \| \nabla^2
    (\tau_{1,s}(\vartheta u))\|_{L^2(B_{1}^+)},
\end{multline}
where the constants $C_i>0$, $i=1,2,3$, only depend on $\| \mathbb
P\|_{C^{0,1}( \overline{B_{1}^+})}$, $\gamma$ and $M_0$.
Therefore, recalling that $\| \nabla (\tau_{1,s}(\vartheta
u))\|_{L^2(B_1^+)}\leq c \|u\|_{H^2(B_1^+)}$, where $c$ is an
absolute constant, and by Poincar\`{e} inequality we have
\begin{equation}
    \label{eq:r20.e2bis}
    \left \|
    \nabla^2 ( \tau_{1,s} (\vartheta u))
    \right \|_{L^2 ( B_{\rho}^+)}
    \leq C
    \left (\| \widehat{{\cal{M}}} \|_{H^{\frac{1}{2}}(\Gamma_1)}  + \| u \|_{H^2 ( B_{1}^+)} \right ),
\end{equation}
where the constant $C>0$ only depends on $\| \mathbb P\|_{C^{0,1}(
\overline{B_{1}^+})}$, $\gamma$ and $M_0$. Taking the limit as $s
\rightarrow 0$ we finally have
\begin{equation}
    \label{eq:r21.e1}
    \left\| \frac{\partial }{\partial y_1}  \nabla^2 u \right\|_{L^2 ( B_{\rho}^+)}
    \leq
    C \left (\| \widehat{{\cal{M}}} \|_{H^{\frac{1}{2}}(\Gamma_1)}  + \| u \|_{H^2 ( B_{1}^+)} \right ),
\end{equation}
where the constant $C>0$ only depends on $\| \mathbb P\|_{C^{0,1}(
\overline{B_{1}^+})}$, $\gamma$ and $M_0$.

To obtain an analogous estimate for the normal derivative $
\frac{\partial}{\partial y_2} \nabla^2 u$, we shall use the
following Lemma.
\begin{lem} (\cite[Lemma $9.3$]{l:agmon})
    \label{lem:agmon}
    Assume that the function $g \in L^2 (B_{\sigma}^+)$ has weak
    tangential derivative $ \frac{\partial g}{\partial y_1} \in L^2
    (B_{\sigma}^+)$ and that there exist a constant $K$, $K>0$, such
    that
\begin{equation}
    \label{eq:r21.e2}
    \left |\int_{B_{\sigma}^+} g \frac{\partial^2 \psi}{\partial
    y_2^2} \right |
    \leq K \| \psi \|_{H^1 ( B_{\sigma}^+)}, \qquad \hbox{for
    every } \psi \in C_0^{\infty}(B_{\sigma}^+).
\end{equation}
Then, for every $\rho < \sigma$, $g \in H^1(B_{\rho}^+)$ and
\begin{equation}
    \label{eq:r21.e3}
    \| g \|_{H^1 ( B_{\rho}^+)}
    \leq
    C
    \left (
    K + \| g \|_{L^2 ( B_{\sigma}^+)} + \sigma\left\|\frac{\partial g}{\partial y_1} \right\|_{L^2 (
    B_{\sigma}^+)}
    \right ),
\end{equation}
where $C>0$ only depends on $\frac{\rho}{\sigma}$.
\end{lem}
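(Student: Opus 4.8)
The plan is as follows. Since the hypothesis already puts the tangential derivative $\partial g/\partial y_1$ in $L^2(B_\sigma^+)$ and $g$ itself in $L^2$, the entire content of the lemma is an $L^2$ bound for the normal derivative $\partial g/\partial y_2$ on the smaller half-ball; once this is available the estimate for $\|g\|_{H^1(B_\rho^+)}$ is immediate. By the rescaling $y\mapsto\sigma y$ we may assume $\sigma=1$, the powers of $\sigma$ in the statement being recovered by scaling back.

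The key observation is that the inequality $\big|\int_{B_\sigma^+} g\,\partial_{22}\psi\big|\le K\|\psi\|_{H^1(B_\sigma^+)}$ says precisely that the distribution $\partial^2 g/\partial y_2^2$ is bounded in $H^{-1}$ by $CK$; since moreover $\partial g/\partial y_1\in L^2$ forces $\partial^2 g/\partial y_1^2$ and $\partial^2 g/\partial y_1\partial y_2$ to be bounded in $H^{-1}$ as well, we obtain $\Delta g\in H^{-1}$ with $\|\Delta g\|_{H^{-1}}\le C\big(K+\|\partial g/\partial y_1\|_{L^2}\big)$. Then the classical interior $H^1$ estimate for the Poisson equation with $H^{-1}$ datum ($g\in L^2$, $\Delta g\in H^{-1}$ imply $g\in H^1_{\mathrm{loc}}$, with the corresponding quantitative bound on compactly contained subdomains, proved by testing against $\zeta^2 g$, mollifying in the tangential variable, and absorbing the gradient term by Young's inequality) upgrades $g$ to $H^1$ in the interior of $B_\sigma^+$.

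To reach $B_\rho^+$ up to its flat portion $\Gamma_\rho$ I would extend $g$ by even reflection across $\{y_2=0\}$, setting $\hat g(y_1,y_2)=g(y_1,|y_2|)$ on $B_1$. The even extension of an $L^2$ function with $L^2$ tangential derivative again has $L^2$ tangential derivative on the full ball, and — applying the hypothesis to the even-in-$y_2$ part $\tfrac12(\varphi(y_1,y_2)+\varphi(y_1,-y_2))$ of an arbitrary $\varphi\in C_0^\infty(B_1)$, which restricts to an admissible test function on $B_1^+$ — one checks that $\partial^2\hat g/\partial y_2^2$ is still bounded in $H^{-1}(B_1)$. Hence $\Delta\hat g\in H^{-1}(B_1)$ with the same kind of bound, and the interior estimate applied on $B_\rho\Subset B_1$ gives $\|g\|_{H^1(B_\rho^+)}\le\|\hat g\|_{H^1(B_\rho)}\le C\big(K+\|g\|_{L^2(B_1^+)}+\|\partial g/\partial y_1\|_{L^2(B_1^+)}\big)$ with $C$ depending only on $\rho$ (hence on $\rho/\sigma$ after scaling), which is the assertion.

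The main obstacle is making the flat-boundary (reflection) step rigorous: one must be sure the hypothesis is available for test functions that need not vanish near $\Gamma_\sigma$ (equivalently, that the even part of a test function on the full ball is admissible), since it is exactly this that yields $\partial^2\hat g/\partial y_2^2\in H^{-1}(B_1)$ without a spurious boundary layer along $\{y_2=0\}$. An alternative avoiding reflection altogether is to run difference quotients $\tau_{1,h}g$ in the tangential direction directly and to handle the normal direction through a one-sided averaging operator $A_h g(y_1,y_2)=h^{-1}\int_0^h g(y_1,y_2+t)\,dt$ (so that $\partial_2 A_h g=\tau_{2,h}g:=(g(y_1,y_2+h)-g(y_1,y_2))/h$), which keeps all evaluations inside $B_\sigma^+$ but requires the analogous uniform-in-$h$ bookkeeping. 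A secondary, routine matter is the dimensional tracking of the powers of $\sigma$ and the verification that the constant in the interior estimate depends only on $\rho/\sigma$, both of which follow from the scaling reduction.
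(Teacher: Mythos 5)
The paper itself contains no proof of this Lemma --- it is quoted directly from Agmon's monograph (Lemma $9.3$ there) --- so there is no in-text argument to compare against; I can only assess your proposal on its own merits. The interior reduction is sound: hypothesis \eqref{eq:r21.e2} says precisely that $\partial_{22}g\in H^{-1}(B_\sigma^+)$ (the dual of $H^1_0(B_\sigma^+)$) with norm at most $K$, and combined with $\partial_{11}g=\partial_1(\partial_1 g)\in H^{-1}$ and $g\in L^2$ this yields $\Delta g\in H^{-1}$ and hence $g\in H^1_{\mathrm{loc}}(B_\sigma^+)$ with the expected quantitative bound on compactly contained subsets. But $B_\rho^+$ is \emph{not} compactly contained in $B_\sigma^+$ --- its closure meets $\Gamma_\sigma$ --- so the interior estimate alone does not give the conclusion; the flat-boundary step is the whole content of the Lemma.

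The reflection step is where there is a genuine gap, and the parenthetical device you use to dismiss it is incorrect. For $\varphi\in C_0^\infty(B_1)$ the even extension $\hat g$ satisfies
$\langle\partial_{22}\hat g,\varphi\rangle=\int_{B_1^+}g\,\partial_{22}\tilde\varphi$ with $\tilde\varphi(y_1,y_2)=\varphi(y_1,y_2)+\varphi(y_1,-y_2)$, but $\tilde\varphi|_{B_1^+}$ does \emph{not} in general vanish near $\Gamma_1$ (e.g.\ $\tilde\varphi(0)=2\varphi(0)$), whereas \eqref{eq:r21.e2} is hypothesized only for $\psi\in C_0^\infty(B_\sigma^+)$, i.e.\ for test functions vanishing near the flat portion of the boundary as well. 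So $\tilde\varphi$ is not an ``admissible test function on $B_1^+$'' in the required sense, you cannot bound $\langle\partial_{22}\hat g,\varphi\rangle$ by $K\|\varphi\|_{H^1(B_1)}$ this way, and at this stage nothing rules out a singular layer for $\Delta\hat g$ along $\{y_2=0\}$. The one-sided averaging route you mention is the right direction, but it too needs an extra device to reach the flat boundary: replace $g$ by its translate $g_h(y)=g(y+he_2)$ and apply \eqref{eq:r21.e2} to $\psi(\cdot-he_2)$, which for small $h>0$ \emph{does} belong to $C_0^\infty(B_\sigma^+)$ even when $\psi\in C_0^\infty(B_{\sigma'}^+\cup\Gamma_{\sigma'})$; this transfers the estimate to $g_h$ tested against functions that need not vanish on the flat boundary, a Caccioppoli argument (legitimate at each fixed $h>0$ thanks to the interior regularity you already have) then gives an $H^1(B_\rho^+)$ bound for $g_h$ uniform in $h$, and letting $h\downarrow 0$ finishes. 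As written, your proposal correctly flags the obstacle but asserts, wrongly, that it is already circumvented.
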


Throughout this part let $\psi \in C_0^{\infty}(B_{\sigma_0}^+)$.
The arguments above show that, without loss of generality, we may
assume that $\mathbb A = \mathbb B = \mathbb D =0$. Therefore, we
can write
\begin{equation}
    \label{eq:r22.e1}
    a_{+}(u, \psi) =
    \int_{B_{\sigma_0}^+}
    \left (
    Q_{22\gamma\delta} u,_{\gamma\delta} \psi,_{22}
    +
    2Q_{12\gamma\delta}u,_{\gamma\delta} \psi,_{12} +
    Q_{11\gamma\delta}u,_{\gamma\delta} \psi,_{11}
    \right )
\end{equation}
and, by integration by parts, we have
\begin{multline}
    \label{eq:r22.e2}
    \int_{B_{\sigma_0}^+}
    Q_{22\gamma\delta} u,_{\gamma\delta} \psi,_{22}
    =
    a_{+}(u, \psi)
    +
    2\int_{B_{\sigma_0}^+}
    \left (
    Q_{12 \gamma \delta},_{1}u,_{\gamma\delta}
    \psi,_{2}+Q_{12 \gamma
    \delta}u,_{\gamma\delta 1}\psi,_{2}
    \right ) + \\
    + \int_{B_{\sigma_0}^+} \left (
    Q_{11 \gamma \delta},_{1}u,_{\gamma\delta}
    \psi,_{1}+Q_{11 \gamma
    \delta}u,_{\gamma\delta 1}\psi,_{1}
    \right ).
\end{multline}
Since $\psi \in C_0^\infty ({B_{\sigma_0}^+})$, we have $a_{+}(u,
\psi)=0$. Let us define
\begin{equation}
    \label{eq:322.e3}
    g = \sum_{\gamma, \delta=1}^2 Q_{22\gamma\delta}u,_{\gamma\delta}.
\end{equation}
Clearly $g \in L^2(B_\sigma^+)$ and $\frac{\partial g}{\partial
y_1} \in L^2(B_\sigma^+)$ by \eqref{eq:r21.e1}. By
\eqref{eq:r22.e2} and by estimate \eqref{eq:r21.e1} we have
\begin{equation}
    \label{eq:r23.e1}
    \left|\int_{B_{\sigma_0}^+} g \psi,_{22}\right|
    \leq
    C \left (\| \widehat{{\cal{M}}} \|_{H^{\frac{1}{2}}(\Gamma_1)} + \| u \|_{H^2 ( B_{1}^+)} \right ) \| \psi \|_{H^1 (
    B_{\sigma_0}^+)},
\end{equation}
with $C>0$ constant only depending on $\| \mathbb P\|_{C^{0,1}(
\overline{B_{1}^+})}$, $\gamma$ and $M_0$, so that, in
$B_{\sigma_0}^+$, the function $g$ satisfies the conditions of
Lemma \ref{lem:agmon}. Hence, $g \in H^1(B_{\rho}^+)$ and we have
\begin{equation}
    \label{eq:r23.e2}
    \| g \|_{H^1 ( B_{\rho}^+)} \leq C \left (\| \widehat{{\cal{M}}} \|_{H^{\frac{1}{2}}(\Gamma_1)}
     + \| u \|_{H^2 ( B_{1}^+)} \right ),
\end{equation}
where the constant $C>0$ only depends on $\| \mathbb P\|_{C^{0,1}(
\overline{B_{1}^+})}$, $\gamma$ and $M_0$. By the ellipticity of
$\mathbb Q$, by \eqref{eq:r21.e1} and \eqref{eq:r23.e2}, it
follows that
\begin{equation}
    \label{eq:r23.e3}
    u,_{22} = Q_{2222}^{-1}
    \left
    (
    g -
    \sum_{(\gamma, \delta)\neq (2,2)}
    Q_{22\gamma\delta}u,_{\gamma\delta}
    \right ) \in H^1(B_{\rho}^+),
\end{equation}
and therefore
\begin{equation}
    \label{eq:r23.e4}
    \|u,_{222}\|_{L^2 ( B_{\rho}^+)}
    \leq
    C \left (\| \widehat{{\cal{M}}} \|_{H^{\frac{1}{2}}(\Gamma_1)}
     + \| u \|_{H^2 ( B_{\sigma}^+)} \right ),
\end{equation}
where the constant $C>0$ only depends on $\| \mathbb P\|_{C^{0,1}(
\overline{B_{1}^+})}$, $\gamma$ and $M_0$.

By \eqref{eq:r21.e1} and \eqref{eq:r23.e4} we have
\begin{equation}
    \label{eq:r24.e1}
    \|u\|_{H^3 ( B_{\rho}^+)}
    \leq C \left (\| \widehat{{\cal{M}}} \|_{H^{\frac{1}{2}}(\Gamma_1)}
     + \| u \|_{H^2 ( B_{1}^+)} \right ),
\end{equation}
where the constant $C>0$ only depends on $\| \mathbb P\|_{C^{0,1}(
\overline{B_{1}^+})}$, $\gamma$ and $M_0$. This completes the
proof.
\end{proof}


\begin{thebibliography}{} \label{bbiibb}

\bibitem[Ad]{l:ad}
R. A. Adams, Sobolev Spaces,
Academic Press, New York, 1975.

\bibitem[Ag]{l:agmon} S. Agmon, Lectures on Elliptic Boundary Value Problems, Van Nostrand, New York, 1965.

\bibitem[Al-B-Ro-Ve]{A-B-R-V} G. Alessandrini, E. Beretta, E.
Rosset, S. Vessella, Optimal stability for inverse elliptic
boundary value problems with unknown boundaries, Ann. Scuola Norm.
Sup. Pisa Cl. Sci. 29(4) (2000), 755-806.

\bibitem[Al-R-Ro-Ve]{A-R-R-V} G. Alessandrini, L. Rondi, E. Rosset,
S. Vessella, The stability for the Cauchy problem for elliptic
equations, Inverse Problems 25 (2009), 1-47.

\bibitem[Al-Ro]{Al-Ro} G. Alessandrini, E. Rosset, The inverse conductivity problem with one measurement: bound on the size of the unknown object,
SIAM J. Appl. Math. 58(4) (1998), 1060-1071.

\bibitem[Ali]{Ali} S. Alinhac, Non-unicit\'{e} pour des op\'{e}rateurs diff\'{e}rentiels \`{a}
la caract\'{e}ristiques complexes simples. Ann. Sci. \'{E}cole
Norm. Sup. (4), 13(3) (1980), 385-393.

\bibitem[DiC-R]{DiC-R} M. Di Cristo, L. Rondi, Examples of exponential instability for inverse inclusion and scattering problems, Inverse Problems 19(3) (2003), 685-701.

\bibitem[F-Q]{F-Q-11} W. Fan, P. Qiao, A strain energy-based damage severity correction
factor method for damage identification in plate-type structures,
Preprint, 2011.

\bibitem[Fi]{Fi} G. Fichera, Existence Theorems in Elasticity,
Handbuch der Physik, vol. VI, Springer-Verlag, Berlin, Heidelberg,
New York, 1972.

\bibitem[G-T]{G-T} D. Gilbarg and N.S. Trudinger, Elliptic partial differential
equations of second order, Springer, New York, 1983.

\bibitem[Gu]{Gu} M. E. Gurtin, The Linear Theory of Elasticity,
Handbuch der Physik, vol. VI, Springer-Verlag, Berlin, Heidelberg,
New York, 1972.

\bibitem[M-Ro1]{M-R1} A. Morassi, E. Rosset,
Uniqueness and stability in determining a rigid inclusion
in an elastic body, Mem. Amer. Math. Soc. 200 (938), (2009).

\bibitem[M-Ro2]{M-R2} A. Morassi, E. Rosset,
Unique determination of unknown boundaries in an elastic plate by
one measurement, C. R. Mecanique, doi:10.1016/j.crme.2010.07.011,
2010.

\bibitem[M-Ro-Ve1]{M-R-V1}  A. Morassi, E. Rosset, S. Vessella,
Size estimates for inclusions in an elastic plate by boundary
measurements, Indiana Univ. Math. J. 56(5) (2007), 2535-2384.

\bibitem[M-Ro-Ve2]{M-R-V2}    A. Morassi, E. Rosset, S. Vessella,
Unique determination of a cavity in a elastic plate by two
boundary measurements, Inverse Problems and Imaging 1(3) (2007),
481-506.

\bibitem[M-Ro-Ve3]{M-R-V3}  A. Morassi, E. Rosset, S. Vessella,
Detecting general inclusions in elastic plates, Inverse Problems,
25 (2009), 045009.

\bibitem[M-Ro-Ve4]{M-R-V4} A. Morassi, E. Rosset, S. Vessella,
Sharp three sphere inequality for perturbations of a product of
two second order elliptic operators and stability for the Cauchy
problem for the anisotropic plate equation, J. Funct. Anal. 261(6)
(2011),  1494-1541.

\bibitem[M-Ro-Ve5]{M-R-V5} A. Morassi, E. Rosset, S. Vessella,
Estimating area of inclusions in anisotropic plates from boundary
data, Preprint (2011).



\end{thebibliography}
\end{document}